\documentclass[10pt]{amsart}
\allowdisplaybreaks

% PACKAGES
\usepackage{amsfonts}
\usepackage{amsmath}
\usepackage{amssymb}
\usepackage{amsthm}
\usepackage{graphicx} \usepackage{enumerate} \usepackage{multicol}
\usepackage{mathrsfs} \usepackage[all,cmtip]{xy}
\usepackage{enumerate}
\usepackage{xcolor}

% THEOREMS ETC
% \newtheorem{dummy}{Dummy}[section]
\newcounter{dummy} \numberwithin{dummy}{section}
\newtheorem{theorem}[dummy]{Theorem}

\newtheorem{lemma}[dummy]{Lemma}
\newtheorem{definition}[dummy]{Definition}
\newtheorem{proposition}[dummy]{Proposition}
\theoremstyle{remark}
\newtheorem{remark}[dummy]{Remark}
\newtheorem{example}[dummy]{Example}

% Transpose
\newcommand\transpose{%
  {\mathchoice
    {\raisebox{.45ex}{$\displaystyle{\intercal}$}}
    {\raisebox{.45ex}{$\textstyle{\intercal}$}}
    {\raisebox{.30ex}{$\scriptstyle{\intercal}$}}  
    {\raisebox{.23ex}{$\scriptscriptstyle{\intercal}$}}}
  }

% SPECIAL CHARACTERS
%% Calligraphic letters
\newcommand{\calE}{\mathcal{E}}

\newcommand{\calO}{\mathcal{O}}

%% Sans serif
\newcommand{\sfF}{\mathsf{F}}
\newcommand{\sff}{\mathsf{f}}
\newcommand{\sfc}{\mathsf{c}}
\newcommand{\sfC}{\mathsf{C}}
\newcommand{\sfG}{\mathsf{G}}
\newcommand{\sfSigma}{\mathsf{\Sigma}}

% OPERATORS
%% Standard operators
\DeclareMathOperator{\Ann}{Ann}
\DeclareMathOperator{\Sym}{Sym}

\DeclareMathOperator{\id}{id}

\DeclareMathOperator{\pr}{pr}
\DeclareMathOperator{\rank}{rank}
\DeclareMathOperator{\spn}{span}

%% Lie theory
\DeclareMathOperator{\Ad}{Ad}

\DeclareMathOperator{\GL}{GL}
\DeclareMathOperator{\gl}{\mathfrak{gl}}
\DeclareMathOperator{\SO}{SO}
\DeclareMathOperator{\Ort}{O}
\DeclareMathOperator{\ort}{\mathfrak{o}}
\DeclareMathOperator{\Aut}{Aut}

%% Particular operators

\DeclareMathOperator{\Isom}{Isom}
\DeclareMathOperator{\Hol}{Hol}
\DeclareMathOperator{\Nil}{Nil}
\DeclareMathOperator{\Frame}{Fr}
\DeclareMathOperator{\nil}{\mathfrak{nil}}
\DeclareMathOperator{\ptr}{/\! \! /}
\newcommand{\bnabla}{\boldsymbol \nabla}

\newcommand\blank{{\kern.8pt\cdot\kern.8pt}}

\numberwithin{equation}{section}

\title[Model spaces in sub-Riemannian geometry]{Model spaces in sub-Riemannian geometry}
\author[E.~Grong]{Erlend Grong}

\address{Universit\'e Paris Sud, Laboratoire des Signaux et Syst\`emes (L2S) Sup\'elec, CNRS, Universit\'e Paris-Saclay, 3 rue Joliot-Curie, 91192
Gif-sur-Yvette, France and University of Bergen, Department of Mathematics, P. O. Box 7803,
5020 Bergen, Norway.}
\email{erlend.grong@gmail.com}

\subjclass[2010]{53C17}

\keywords{Sub-Riemannian geometries, model spaces, isometries}

\begin{document}
\begin{abstract}
We consider sub-Riemannian spaces admitting an isometry group that is maximal in the sense that any linear isometry between the horizontal tangent spaces is realized by a global isometry. We will show that these spaces have a canonical choice of partial connection on their horizontal bundle, which is determined by isometries and generalizes the Levi-Civita connection for the special case of Riemannian model spaces. The number of invariants needed to describe model spaces with the same tangent cone is in general greater than one, and these invariants are not necessarily related to the holonomy of the canonical connections.
 \end{abstract}

\maketitle
\section{Introduction} \label{sec;Introduction}
One need only look to the classical Gauss map \cite{Gau65} to see that the development of Riemannian geometry is informed by the ideas of model spaces. The euclidean space, the hyperbolic spaces and the spheres are also the reference spaces for comparison theorems in Riemannian geometry, such as the Laplacian comparison theorem \cite[Theorem~3.4.2]{Hsu02} and volume comparison theorem \cite[Chapter 9]{Pet06}. For sub-Riemannian geometry, the lack of a generalization of the Levi-Civita connection has complicated the understanding of geometric invariants. One has had the idea of `flat space' since the description by Mitchell on sub-Riemannian tangent cones \cite{Mit85}, later improved in \cite{Bel96}, but it is less clear what the sub-Riemannian analogues of spheres and hyperbolic spaces should be. As there has been recent investigations into generalizations of curvature to the sub-Riemannian setting, it seems important to establish model spaces as a reference. See \cite{BaGa17,BaBo12,BBG14,BKW14,GrTh14a,GrTh14b} for approaches to curvature by studying properties of the heat flow and ~\cite{ZeLi07,LiZe11,ABR13,BaRi16} for an approach using the sub-Riemannian geodesic flow. We mention also one of the earliest considerations of curvature in the case of three-dimensional contact structures in Keener Hughen's thesis \cite{Hug95} using Cartan's method of equivalence.

We want to consider sub-Riemannian model spaces from the point of view of spaces with maximal isometry groups. Intuitively, these are sub-Riemannian manifolds that `look the same' not only at every point, but for every orientation of the horizontal bundle. In particular, we want to investigate sub-Riemannian geometry by determining invariants of model spaces with the same tangent cone. For a metric space $(M, d_M)$ and a point~$x_0 \in M$, we say that the pointed space $(N, d_N, y_0)$ is the tangent cone of $M$ at~$x_0$ if any ball of radius $r$ centered at $x_0$ in the space $(M, \lambda d_M)$ converge to a ball of radius $r$ centered at $y_0$ in the Gromov-Hausdorff distance as $\lambda \to \infty$. Any $n$-dimensional Riemannian manifold has the $n$-dimensional euclidean space as its tangent cone at any fixed point. Hence, any Riemannian model space is uniquely determined by its tangent cone and one parameter, its sectional curvature. We will show that the number of invariants determining sub-Riemannian model spaces with the same tangent cone is not necessarily one. Furthermore, these parameters may in general not relate to whether or not the model space has trivial holonomy.

To be more specific, we list our main results for a sub-Riemannian model space $(M, D, g)$, where $D$ is the horizontal bundle of rank $n$ and $g$ is the sub-Riemannian metric defined on~$D$.
\begin{enumerate}[\rm (i)]
\item On any model space, there exists a unique partial connection on the horizontal bundle $D$ which is invariant under the isometry group,which equals the Levi-Civita connection for the case $D = TM$. We call this the canonical partial connection, see Section~\ref{sec:Canonical}.
\item The horizontal holonomy group of the canonical partial connection is either trivial or isomorphic to $\SO(n)$. If the holonomy is trivial, then $M$ is a Lie group and all isometries are compositions of left translations and Lie group automorphisms. If the holonomy is isomorphic to $\SO(n)$, then we obtain a new sub-Riemannian model space $\Frame(M)$ with trivial holonomy by considering a lifted structure to the orthonormal frame bundle of $D$. See Sections~\ref{sec:Holonomy}, \ref{sec:Lie} and \ref{sec:Frame}.
\item The tangent cone of $M$ at any point is a Carnot group that is also a model space. See Section~\ref{sec:Tangent}.
\item If $M$ is a model space of even step whose tangent cone equals the free nilpontent Lie group, then $M$ is a Lie group. See Theorem~\ref{th:Free}.
\item Any sub-Riemannian model space of step~$2$ is either the free nilpotent group or locally isometric to the frame bundle of curved Riemannian model space. In particular, in step~$2$, any model space is uniquely determined by its tangent cone and one parameter; however, all have trivial holonomy. See Theorem~\ref{th:Step2}.
\item We give an example in step~3, where all the model spaces of a given tangent cone is determined by two parameters. Only one of these parameters determines the horizontal holonomy of the canonical partial connection. See Theorem~\ref{th:Rol3}.
\item There is a binary operation $(M^{(1)}, M^{(2)}) \mapsto M^{(1)} \boxplus M^{(2)}$ of sub-Riemannian model spaces that is commutative, associative and distribute. See Section~\ref{sec:RolSum}.
\end{enumerate}

The structure of the paper is as follows. In Section~\ref{sec:Prelim}, we include some preliminaries and define sub-Riemannian model spaces. We prove that these spaces have a canonical partial connection in Section~\ref{sec:Connection} and discuss holonomy and curvature of such connections. Next, we consider Carnot groups who are also model spaces in Section~\ref{sec:Carnot}. We choose two classes of Carnot groups and study model spaces with these groups as their tangent cone more closely in Section~\ref{sec:Free} and~\ref{sec:Rol}. We include some basic results on representations of $\Ort(n)$ in Appendix~\ref{sec:On} needed for the results in Section~\ref{sec:Free} and~\ref{sec:Rol}.

\subsection*{Acknowledgments} 
We thank professors Enrico Le Donne and Jean-Marc Schlenker for helpful discussions and comments. We also thank the referees of this paper for their comments on how to improve the paper. This work has been supported by the Fonds National de la Recherche Luxembourg (project Open O14/7628746 GEOMREV) and by the Research Council of Norway (project number 249980/F20).

\section{Preliminaries} \label{sec:Prelim}
\subsection{Notation}
If $\mathfrak{a}$ and $\mathfrak{b}$ are two inner product spaces, we write $\Ort(\mathfrak{a}, \mathfrak{b})$ for the space of all linear isometries from $\mathfrak{a}$ to $\mathfrak{b}$, and write the Lie group $\Ort(\mathfrak{a},\mathfrak{a})$ as just $\Ort(\mathfrak{a})$. When $\mathfrak{a}$ and $\mathfrak{b}$ are oriented, we define $\SO(\mathfrak{a}, \mathfrak{b})$ and $\SO(\mathfrak{a})$ analogously. We write $\ort(n)$ for the Lie algebra of $\Ort(n)$, consisting of anti-symmetric matrices. In what follows, it will often be practical to identify $\wedge^2 \mathbb{R}^n$ and~$\ort(n)$ as vector spaces through the map 
\begin{equation} \label{wedge} x \wedge y \mapsto y x^\transpose - x y^\transpose , \qquad x,y \in \mathbb{R}^n.
\end{equation}
Relative to this identification, note that  for any $x,y \in \mathbb{R}^n$, $A \in \ort(n)$ and $a \in \Ort(n)$, we have
$$[A, x \wedge y] = Ax \wedge y + x \wedge Ay \quad \text{and} \quad \Ad(a) x \wedge y = ax \wedge a y.$$

If $\mathfrak{a}$ and $\mathfrak{b}$ are two vector spaces, possibly with Lie algebra structures, then the notation $\mathfrak{g} = \mathfrak{a} \oplus \mathfrak{b}$ will only mean that~$\mathfrak{g}$ equals $\mathfrak{a} \oplus \mathfrak{b}$ as a vector space, stating nothing about a possible Lie algebra structure.

If $\pi: E \to M$ is a vector bundle, we will write $\Gamma(E)$ for the space of its smooth sections. If $g \in \Gamma(\Sym^2 E^*)$ is a metric tensor on $E$, we write $g(e_1, e_2) = \langle e_1, e_2 \rangle_g$ and $| e_1 |_g = \langle e_1, e_1\rangle^{1/2}_g$ for $e_1 ,e_2 \in E$.

\subsection{Sub-Riemannian manifolds} \label{sec:SRdef}
Let $M$ be a connected manifold and let $D$ be a subbundle of $TM$ equipped with a positive definite metric tensor~$g$. The pair $(D,g)$ is then called \emph{a sub-Riemannian structure} and the triple $(M, D, g)$ is called \emph{a sub-Riemannian manifold}. An absolutely continuous curve $\gamma$ in $M$ is called \emph{horizontal} if $\dot \gamma(t) \in D_{\gamma(t)}$ for almost every $t$. %We will simply say that $\gamma$ is horizontal if $D$ is clear from the context.
Associated with the sub-Riemannian structure $(D, g)$, we have \emph{the Carnot-Carath\'eodory metric}
$$d_M(x, y) = \inf \left\{ \int_0^1 | \dot \gamma(t) |_g \, dt \, : \, \begin{array}{c} \text{$\gamma$ horizontal} \\ \text{$\gamma(0) = x$, $\gamma(1) = y$.}\end{array} \right\},$$
with $x,y \in M$. If any pair of points can be connected by a horizontal curve, then $(M, d_M)$ is a well defined metric space. A sufficient condition for connectivity by horizontal curves is that $D$ is \emph{bracket-generating}, i.e. that its sections and their iterated brackets span the entire tangent bundle. Not only will $d_M$ be well-defined in this case, but its topology coincides with the manifold topology. For more on sub-Riemannian manifolds, see \cite{Mon02}.

We next turn to the definition of isometries, which will be central to our discussion of model spaces.
\begin{definition}
For $j =1,2$, let $(M^{(j)}, D^{(j)}, g^{(j)})$ be sub-Riemannian manifolds with $D^{(j)}$ bracket-generating. Let $d_{(j)}$ be the Carnot-Carath\'eodory metric of $g^{(j)}$. A homeomorphism $\varphi:M^{(1)} \to M^{(2)}$ is called a sub-Riemannian isometry if
$$d_{(2)}(\varphi(x), \varphi(y)) = d_{(1)}(x,y),$$
for any $x,y \in M^{(1)}$.
\end{definition}
We know that isometries are always smooth maps for the case when $D$ is \emph{equiregular}. Let $D$ be a general subbundle of $TM$. Define $\underline{D}^1 = \Gamma(D)$ and for $j \geq 1$,
\begin{equation} \label{Dunderline} \underline{D}^{j+1} : = \spn \left\{Y, [X,Y] \, : \, X \in \Gamma(D), Y \in \underline{D}^j \right\}.\end{equation}
For each $j \geq 1$ and $x \in M$, write $n_j(x) = \rank \{ Y(x) \, : \,  Y \in \underline{D}^j \}$. The sequence $\underline{n}(x) = (n_1(x), n_2(x), \dots)$ is called \emph{the growth vector of $D$ at $x$}.  The subbundle $D$ is called equiregular if the functions $n_j$ are constant. We will refer to the minimal integer $r$ such that $\underline{D}^r = \underline{D}^{r+1}$ as the \emph{step} of $D$. If $D$ is equiregular, there exists a flag of subbundles
\begin{equation} \label{flag}  D^0 =0 \subseteq D^1 = D  \subseteq D^2 \subseteq \cdots \subseteq D^r,\end{equation}
such that $\underline{D}^j = \Gamma(D^j)$. The following result is found in \cite{CaLD13}.

\begin{theorem} \label{th:Isom}
Let $(M, D, g)$ be a sub-Riemannian manifold with $D$ bracket-generating and equiregular and let $\varphi: M \to M$ be an isometry. Then $\varphi$ is a smooth map saitisfying $\varphi_* (D)\subseteq D$ and $\langle \varphi_* v, \varphi_* w \rangle_g = \langle v,w \rangle_g$. Furthermore, if $\tilde \varphi$ is another isometry such that for some $x \in M$,
$$\tilde \varphi(x) = \varphi(x) \quad \text{and} \quad \tilde \varphi_* |D_x = \varphi_*|D_x,$$
then $\tilde \varphi = \varphi$. The group of all isometries admits the structure of a finite dimensional Lie group.
\end{theorem}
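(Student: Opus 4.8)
The plan is to prove the three assertions in turn --- smoothness together with preservation of $(D,g)$, then the $1$-jet rigidity, then the Lie group structure --- each one feeding the next. The first is the substantial one, and is exactly the content quoted from \cite{CaLD13}; I would not attempt to reprove it, but the scheme is the following. A priori $\varphi$ is merely a homeomorphism preserving $d_M$. From the ball--box estimate for $d_M$ in coordinates adapted to the flag \eqref{flag}, one sees that $\varphi$ is locally biLipschitz with respect to any Riemannian extension of $g$, hence differentiable almost everywhere, and, being distance preserving, even Pansu-differentiable almost everywhere; at each such point its Pansu differential is an isometry between the metric tangent cones of $M$, which are Carnot groups \cite{Mit85,Bel96}. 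An isometry of a Carnot group fixing the identity is a graded group automorphism, hence linear on, and preserving, the first stratum; translating this back shows that $\varphi_*(D)=D$, that $\langle \varphi_* v,\varphi_* w\rangle_g=\langle v,w\rangle_g$ almost everywhere, and that $\varphi$ carries horizontal curves to horizontal curves. Consequently $\varphi$ weakly intertwines $g|_D$ and Popp's measure, hence the associated sub-Laplacian, so in a chart the components of $\varphi$ solve a hypoelliptic second order system --- the sub-Riemannian analogue of the equation satisfied by an isometry between Riemannian manifolds in harmonic coordinates --- and H\"ormander's hypoellipticity theorem promotes the almost-everywhere differentiability to $C^\infty$. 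Once $\varphi$ is smooth, the almost-everywhere identities $\varphi_*(D)=D$ and $\varphi^*g=g$ hold everywhere by continuity. I expect this analytic bootstrap to be the main obstacle.

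For the rigidity, put $\psi=\tilde\varphi^{-1}\circ\varphi$; by the previous part $\psi$ is a smooth isometry preserving $(D,g)$ with $\psi(x)=x$ and $\psi_*|_{D_x}=\id$, and the goal is to show $\psi=\id$. Since $\psi$ preserves $(D,g)$, its canonical lift to $T^*M$ is a symplectomorphism preserving the sub-Riemannian Hamiltonian $H(\lambda)=\tfrac{1}{2}\sum_i \lambda(X_i)^2$ (for any local orthonormal frame $X_1,\dots,X_n$ of $D$), hence commutes with the normal geodesic flow; therefore $\psi\circ\exp_x=\exp_x\circ L_x$, where $\exp_x$ is the normal sub-Riemannian exponential based at $x$ and $L_x\colon T_x^*M\to T_x^*M$ is the restriction of that cotangent lift. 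Now $\psi$ preserves the flag \eqref{flag}, and the hypothesis $\psi_*|_{D_x}=\id$ propagates through iterated brackets: a short computation (in which the terms involving derivatives of the coefficients of $\psi_* X_i$ always fall into the lower layer) shows that $\psi_*$ is unipotent with respect to the flag at $x$ and induces the identity on each graded space $D_x^{j+1}/D_x^j$; equivalently, the Pansu differential of $\psi$ at $x$ is the identity of the tangent cone, since a graded automorphism of a stratified Lie algebra is determined by its first stratum. Feeding this into the distance-preservation of $\psi$ --- comparing the two sides of $\psi\circ\exp_x=\exp_x\circ L_x$ near $0\in T_x^*M$ against the ball--box scaling --- forces $L_x=\id$, whence $\psi=\id$ on $\exp_x(T_x^*M)$, which contains a neighbourhood of $x$. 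As the set on which $\psi$ and $\psi_*$ are the identity is then open, closed and nonempty, connectedness of $M$ gives $\psi=\id$.

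Finally, for the Lie group structure I would argue as in Myers--Steenrod. By the first part every element of $G=\Isom(M,d_M)$ is a diffeomorphism preserving $(D,g)$, so $G$ acts on the orthonormal frame bundle $\Frame(M)$, a principal $\Ort(n)$-bundle over $M$ with $n=\rank D$, and by the rigidity this action is free; fixing $f_0\in\Frame(M)$, the orbit map $\psi\mapsto\hat\psi(f_0)$ is therefore a continuous injection $G\hookrightarrow\Frame(M)$. One then checks, exactly as in the Riemannian case, that the image orbit is a locally closed submanifold, that the orbit map is a homeomorphism onto it, and that the transported smooth structure makes composition and inversion smooth; since $G$ embeds in $\Frame(M)$, its dimension is at most $\dim M+\binom{n}{2}$, so $G$ is a finite-dimensional Lie group. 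The local compactness of $G$ and the continuity of the action $G\times M\to M$ that are needed to run this argument follow from Arzel\`a--Ascoli.
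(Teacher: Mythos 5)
The paper does not actually prove this theorem: it is quoted verbatim from \cite{CaLD13}, so there is no in-paper argument to compare yours against. Your outline of the smoothness part is a fair summary of the strategy of that reference (Pansu differentials, preservation of the flag and of Popp's measure, hypoelliptic bootstrap), and deferring it is exactly what the paper does, so no complaint there.

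The genuine gap is in your uniqueness argument. Your bracket computation correctly shows that $\psi_{*,x}$ is unipotent with respect to the flag \eqref{flag} and induces the identity on each quotient $D^j_x/D^{j-1}_x$, but unipotent is not the identity, and the step ``comparing $\psi\circ\exp_x=\exp_x\circ L_x$ against the ball--box scaling forces $L_x=\id$'' does not go through. If $\lambda$ and $L_x\lambda$ are two covectors with the same restriction to $D_x$ (which is exactly what unipotency of $L_x$ with respect to the dual flag gives you), the corresponding normal geodesics have the same initial velocity but in general different higher-order behaviour, and both are unit-speed minimizers for small time; the ball--box estimate sees only the graded (Pansu) differential, which you have already shown is the identity, so it cannot detect the off-diagonal part of $L_x$. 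In other words, the obstruction you need to kill lives precisely in the part of the $1$-jet that the metric blow-up forgets. A second, smaller problem: you assert that $\exp_x(T^*_xM)$ contains a neighbourhood of $x$; in the presence of strictly abnormal minimizers this is not known in general (only open-density of points reached by normal minimizers, by Agrachev's theorem), though that part is patchable by continuity. The rigidity statement is one of the substantive results of \cite{CaLD13} and is obtained there by different means (via the regularity theory and the structure of the stabilizer acting on the horizontal frame bundle), not by conjugating exponential maps. Relatedly, in the last part the phrase ``exactly as in the Riemannian case'' hides the fact that the Myers--Steenrod orbit argument leans on normal coordinates; the standard sub-Riemannian route is local compactness of the isometry group by Arzel\`a--Ascoli plus the Bochner--Montgomery/Montgomery--Zippin theorem, using the already-established smoothness and the freeness of the action on $F^{\Ort}(D)$ --- which in turn requires the uniqueness statement whose proof is the gap above.
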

In what follows, we will often write the sub-Riemannian manifold $(M, D, g)$ simply as $M$ if $(D,g)$ is understood from the context. This includes the case of Riemannian manifolds $D = TM$. Also, for any $\lambda > 0$, the space $(M, D, \lambda^2 g)$ is denoted by $\lambda M$. Clearly, if $d_M$ is the metric of $M$, the space $\lambda M$ has metric $\lambda d_M$.

\begin{remark} \label{re:Dj}  Let $\varphi:M \to M$ be a diffeomorphism such that $\varphi_*(D) \subseteq D$. Define $\Ad(\varphi) X$ by $\Ad(\varphi) X(x) = \varphi_* X(\varphi^{-1}(x))$ for any vector field $X \in \Gamma(TM)$ and point $x \in M$. Then $[\Ad(\varphi) X, \Ad(\varphi) Y] = \Ad(\varphi) [X,Y]$, so $\Ad(\varphi) \underline{D}^j \subseteq \underline{D}^j$ for any $j \geq 1$. As a consequence, if $D$ is equiregular and $\underline{D}^j = \Gamma(D^j)$, we have $\varphi_* D^j \subseteq D^j$ as well.
\end{remark}

\subsection{Riemannian model spaces} \label{sec:Riemannian}
A complete, connected, simply connected Riemannian manifold $(M, g)$ is called \emph{a model space} if it has constant sectional curvature. We review some basic facts about such spaces and refer to \cite{KoNo63} for details. For a given dimension $n$ and sectional curvature~$\rho$, there is a unique model space up to isometry, which we will denote by $\sfSigma_n(\rho)$. The cases $\rho = 0$, $\rho > 0$ and $\rho < 0$ correspond respectively to the cases of euclidean space, spheres and hyperbolic space.

These model spaces can be seen as symmetric spaces. For any integer $n \geq 2$ and constant $\rho \in \mathbb{R}$, consider the Lie algebra $\mathfrak{g} = \mathsf{g}_n(\rho)$ of all matrices
$$\left(\begin{array}{cc} A & x \\ -\rho x^\transpose & 0 \end{array} \right), \qquad \text{$A \in \ort(n)$, $x \in \mathbb{R}^n$.}$$
Write $\mathfrak{g} = \mathfrak{p} \oplus \mathfrak{k}$ corresponding to the subspaces $A = 0$ and $x = 0$ respectively. Give $\mathfrak{p}$ the standard euclidean metric in these coordinates. Let $G = \sfG_n(\rho)$ be the corresponding simply connected Lie group and let $K$ be the subgroup corresponding to the subalgebra $\mathfrak{k}$. Define a sub-Riemannian structure $( D, g)$ on $G$ by left translation of $\mathfrak{p}$ and its inner product. Since both $\mathfrak{p}$ and its inner product are $K$-invariant, this sub-Riemannian structure induces a well defined Riemannian structure on $G/K$, which we can identify with~$\sfSigma_n(\rho)$. We remark that $\lambda \sfSigma_n(\rho)$ is isometric to $\sfSigma_n( \rho/\lambda^2 )$ for any $\lambda > 0$.

It is not clear what the analogue of sectional curvature should be in sub-Riemannian geometry. We will therefore also present the following alternative description of Riemannian model spaces. Let $(M, g)$ be a connected, simply connected Riemannian manifold. Then $(M,g)$ is a model space if and only if for every $(x,y) \in M \times M$ and every $q \in \Ort(T_xM, T_yM)$ there exists an isometry $\varphi$ with $\varphi_* |T_xM = q$, see e.g. \cite[Theorem~3.3]{KoNo63}. We will use this result as the basis of our definition for sub-Riemannian model spaces.
\begin{definition}
We say that $(M, D, g)$ is a sub-Riemannian model space if
\begin{enumerate}[\rm (i)]
\item $M$ is a connected, simply connected manifold,
\item $D$ is a bracket-generating subbundle,
\item for any linear isometry $q\in \Ort(D_x, D_y)$, $(x, y) \in M \times M$, there exists a smooth isometry $\varphi: M \to M$ such that $\varphi_*| D_x = q$.
\end{enumerate}
\end{definition}

\section{Sub-Riemannian model spaces and partial connections} \label{sec:Connection}
\subsection{Sub-Riemannian homogeneous spaces} \label{sec:HomSpaces}
We say that a sub-Riemannian manifold $(M, D,g)$ with $D$ bracket-generating is a \emph{homogeneous space} if for a pair of points $x,y \in M$ there exists an isometry satisfying $\varphi(x) = y$. Since the growth vector $\underline{n}(x)$ is determined by the metric $d_M$ in a neighborhood of $x$, we know the growth vector is constant on a homogeneous space. Hence, $D$ is equiregular and all isometries of $M$ are smooth maps that form a Lie group. We denote this Lie group $\Isom(M)$. We will use an approach inspired by \cite{FaGo95} to study these spaces.

Assume that $D$ is of step~$r$. Write $G = \Isom(M)$ and, for any $x \in M$, define its stabilizer group
$$G_x = \left\{ \varphi(x) = x \, : \, \varphi \in G \right\}.$$
By our assumptions, the subgroups $G_x$ are all conjugate and they are compact by \cite[Corollary~5.6]{GaKe03}. For a chosen point $x_0 \in M$, define $K = G_{x_0}$. Consider the $K$-principal bundle
$$K \to G \stackrel{\pi}{\to} M \cong G/K,$$
where $\pi(\varphi) = \varphi(x_0)$. Let $\mathfrak{g}$ and $\mathfrak{k}$ be the Lie algebras of respectively~$G$ and~$K$. Since $K$ is compact, any invariant subspace of a representation of~$K$ has an invariant complement as well. It follows that $\mathfrak{g} = \mathfrak{p} \oplus \mathfrak{k}$ where $\mathfrak{p}$ is some $K$-invariant subspace.

\begin{lemma}  \label{lemma:Action}
Let $\mathfrak{g} = \mathfrak{p}\oplus \mathfrak{k}$ be a decomposition of $\mathfrak{g}$ into $K$-invariant subspaces. For any $v \in T_{x_0}M$, let $A_v \in \mathfrak{p}$ be the unique element satisfying $\pi_* A_v = v$. 
Then
$$\Ad(\varphi) A_v = A_{\varphi_* v}, \qquad \text{for any $\varphi \in K$}.$$
In particular, $\mathfrak{p}^j := \left\{ A \in \mathfrak{p} \, : \, \pi_* A \in D_{x_0}^j \right\}$ is $K$-invariant for $j =1 ,\dots, r$.
\end{lemma}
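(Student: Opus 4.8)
The plan is to prove the two assertions in turn, the second being an easy consequence of the first. For the identity $\Ad(\varphi) A_v = A_{\varphi_* v}$, the key observation is that the projection $\pi : G \to M$, $\pi(\psi) = \psi(x_0)$, intertwines the left action of $G$ on itself with the action of $G$ on $M$, while the right action of $K$ corresponds to the adjoint action on $\mathfrak{g}$. Concretely, for $\varphi \in K$ I would consider the inner automorphism $c_\varphi : G \to G$, $c_\varphi(\psi) = \varphi \psi \varphi^{-1}$, whose differential at the identity is $\Ad(\varphi)$. Since $\varphi \in K$ fixes $x_0$, one checks $\pi \circ c_\varphi = \varphi \circ \pi$ as maps $G \to M$ (evaluate: $\pi(c_\varphi(\psi)) = (\varphi\psi\varphi^{-1})(x_0) = \varphi(\psi(x_0)) = \varphi(\pi(\psi))$, using $\varphi^{-1}(x_0) = x_0$). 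Differentiating this identity at the identity $e \in G$ gives $\pi_* \circ \Ad(\varphi) = \varphi_* \circ \pi_*$ on $\mathfrak{g} = T_e G$. Applying both sides to $A_v \in \mathfrak{p}$ yields $\pi_*(\Ad(\varphi) A_v) = \varphi_*(\pi_* A_v) = \varphi_* v$. Since $\mathfrak{p}$ is $K$-invariant, $\Ad(\varphi) A_v \in \mathfrak{p}$, and by the defining uniqueness property of $A_{\varphi_* v}$ (the unique element of $\mathfrak{p}$ projecting to $\varphi_* v$) we conclude $\Ad(\varphi) A_v = A_{\varphi_* v}$.

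For the second assertion, I would argue that each $D^j$ is preserved by every isometry: by Theorem~\ref{th:Isom} and Remark~\ref{re:Dj}, any $\varphi \in G$ satisfies $\varphi_* D^j \subseteq D^j$, and since $\varphi_*$ is invertible and $D^j$ has constant rank on the homogeneous space, in fact $\varphi_* D^j_x = D^j_{\varphi(x)}$. In particular for $\varphi \in K = G_{x_0}$ we get $\varphi_* D^j_{x_0} = D^j_{x_0}$. Now take $A \in \mathfrak{p}^j$, so $\pi_* A \in D^j_{x_0}$; writing $v = \pi_* A$ we have $A = A_v$ by uniqueness, and by the first part $\Ad(\varphi) A = A_{\varphi_* v}$ with $\varphi_* v \in \varphi_* D^j_{x_0} = D^j_{x_0}$. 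Hence $\Ad(\varphi) A \in \mathfrak{p}^j$, which is exactly $K$-invariance of $\mathfrak{p}^j$.

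I expect no serious obstacle here; the only point requiring a little care is the passage from $\varphi_* D^j \subseteq D^j$ to equality, which needs equiregularity (constancy of $\operatorname{rank} D^j$, guaranteed on a homogeneous space as noted just before the lemma) together with invertibility of $\varphi_*$. One should also note that the map $v \mapsto A_v$ is well defined precisely because $\mathfrak{p}$ is a vector-space complement to $\mathfrak{k} = \ker \pi_*|_{T_e G}$, so $\pi_*|_{\mathfrak{p}} : \mathfrak{p} \to T_{x_0} M$ is a linear isomorphism; this is implicit in the statement and I would recall it at the outset. Everything else is a direct differentiation of the equivariance relation $\pi \circ c_\varphi = \varphi \circ \pi$.
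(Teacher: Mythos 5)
Your proof is correct and follows essentially the same route as the paper: the paper establishes $\pi_*\Ad(\varphi)A_v=\varphi_*v$ by differentiating the curve $t\mapsto(\varphi\circ\exp(tA_v)\circ\varphi^{-1})(x_0)$, which is exactly your equivariance identity $\pi\circ c_\varphi=\varphi\circ\pi$ differentiated at the identity, and then concludes by $K$-invariance of $\mathfrak{p}$ and Remark~\ref{re:Dj} just as you do. Your extra remarks (that $\pi_*|_{\mathfrak{p}}$ is an isomorphism since $\mathfrak{k}=\ker\pi_*|_{T_eG}$, and the inclusion-versus-equality point for $\varphi_*D^j_{x_0}$, where inclusion already suffices since $K$ is a group) are harmless elaborations of what the paper leaves implicit.
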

\begin{proof}
For any $v \in T_{x_0} M$, if we write $\phi(t) = \exp t A_v$, then for any $\varphi \in G$,
\begin{equation} \label{Gaction} \pi_* \Ad(\varphi) A_v = \varphi_* \left. \frac{d}{dt} \left(\phi(t) \circ \varphi^{-1}\right)(x_0) \right|_{t = 0}. \end{equation}
Inserting $\varphi \in K$ in \eqref{Gaction}, we get that $\pi_* \Ad(\varphi) A_v = \varphi_* v$. Since $\Ad(\varphi)$ preserves~$\mathfrak{p}$, it follows that $\Ad(\varphi) A_v = A_{\varphi_* v}$. Hence, each $\mathfrak{p}^j$ is preserved under the action of $K$ by Remark~\ref{re:Dj}.
\end{proof}

\begin{remark} \label{re:orientable}
For any homogeneous space, we must have $D$ orientable. To see this, write $\rank D = n$ and let $x_0$ be an arbitrary point. Consider any $\mu \in \wedge^n D_{x_0}$ with $|\mu|_g =1$. For any isometry $\varphi$ on $M$, we have $| \varphi_* \mu | = 1$ as well. In particular, the image of the identity component of $\Isom(M)$ under the map $\varphi \mapsto \varphi_* \mu$ gives us a well-defined non-vanishing section of $\wedge^n D$.
\end{remark}

\subsection{Canonical partial connection on model spaces} \label{sec:Canonical}
In this section we will show that any sub-Riemannian model space has a canonical partial connection on~$D$, coinciding with the Levi-Civita connection when $D = TM$. Recall that if $\pi: E \to M$ is a vector bundle and $D$ is a subbundle of~$TM$, then a partial connection $\nabla$ on $E$ in the direction of $D$ is a map $\nabla: \Gamma(E) \to \Gamma(D^* \otimes E)$ such that $\nabla \phi e = d\phi|D \otimes e + \phi\nabla e$ for any $\phi \in C^\infty(M)$ and $e \in \Gamma(E)$. As usual, we write $(\nabla e)(v) = \nabla_v e$ for $v \in D$. It is important to note that such partial connections defines parallel transport of elements in $E$ along $D$-horizontal curves. For more on partial connections, see \cite{FGR97}.

Let $(M,D,g)$ be a sub-Riemannian manifold and let $\nabla$ be a partial connection on $D$ in the direction of $D$. We say that $\nabla$ is \emph{compatible} with $g$ if $ \langle Y, Z \rangle_g = \langle \nabla_X Y,Z \rangle_g + \langle Y, \nabla_X Z\rangle_g$ for any $X, Y, Z \in \Gamma(D)$. Equivalently, a partial connection is compatible with $g$ if it takes orthonormal frames of $D$ to orthonormal frames under parallel transport. If $G$ is a group whose elements are diffeomorphisms of $M$ satisfying $\varphi_* (D) \subseteq D$ for every $\varphi \in G$, then we say that $\nabla$ is \emph{invariant} under $G$ if for every
for every $\varphi \in G$ and $X, Y \in \Gamma(D)$,
$$\nabla_{\Ad(\varphi) X} \Ad(\varphi) Y = \Ad(\varphi) \nabla_X Y.$$
Recall that we have defined $\Ad(\varphi) X(x) =  \varphi_* X(\varphi^{-1}(x))$.
\begin{proposition} \label{prop:Connection}
Let $(M, D, g)$ be a sub-Riemannian model space with isometry group $\Isom(M)$. Then there exists a unique partial connection on $D$ in the direction of $D$ that is compatible with $g$ and invariant under $\Isom(M)$. We call this the canonical partial connection on $D$.
\end{proposition}

Before stating the proof of the above result, we would like to review some theory of partial connections and orthonormal frame bundles.

On a principal bundle $K \to P \stackrel{\pi}{\to} M$, where $K$ has Lie algebra $\mathfrak{k}$, a partial connection form $\omega$ in the direction of $D \subseteq TM$ is a partial one-from $\omega: (\pi_*)^{-1} D \to \mathfrak{k}$ satisfying the conditions
$$\textstyle \omega(\frac{d}{dt} p \cdot e^{tA} |_{t=0} ) = A, \qquad \omega(v \cdot a) = \Ad(a^{-1})\omega(v), \quad A\in \mathfrak{k}, p \in P, a \in K, v \in TP.$$
A partial connection will hence give us decomposition $(\pi_*)^{-1} D = \ker \omega \oplus \ker \pi_* =: \calE \oplus \ker \pi_*$ where $\calE$ is a subbundle invariant under the action of $K$.

Consider the orthonormal frame bundle $F^{\Ort}(D)$, i.e., the $\Ort(n)$-principal bundle
$$\Ort(n) \to F^{\Ort}(D) \stackrel{\pi}{\to} M, \qquad n = \mathrm{rank} \, D,$$
such that
$$F^{\Ort}(D)_x =  \Ort( \mathbb{R}^n , D_x),$$
with the standard inner product on $\mathbb{R}^n$ and with $\Ort(n)$ acting on the right by composition. If we let $e_1, \dots, e_n$ denote the standard basis of $\mathbb{R}^n$, we can identify $f \in \Ort(D)_x$ with the orthonormal frame $\{ f(e_1), \dots, f(e_n) \}$ of~$D_x$. For any partial connection $\nabla$ compatible with $g$, we can define a corresponding partial connection form $\omega$ on $F^{\Ort}(D)$ by defining $\calE = \ker \omega$ as the derivatives of $\nabla$-parallel frames.

We have an action of $G= \Isom(M)$ on $F^{\Ort}(D)$ by $\varphi \cdot f := \varphi_* f$. Using Theorem~\ref{th:Isom} and the assumption that $M$ is a model space, we know that this action if free and transitive. By choosing a reference frame $f_0 \in F^{\Ort}(D)_{x_0}$, $x_0 \in M$ and writing $K = G_{x_0}$, we can identify $G$ with $F^{\Ort}(D)$ through the map $\varphi \mapsto \varphi \cdot f_0$. With this identification, the subbundle $\calE$ is right invariant under~$K$. Furthermore, if the partial connection $\nabla$ also satisfies $\nabla_{\Ad(\varphi) X} \Ad(\varphi) Y = \Ad(\varphi) \nabla_X Y$ for every $X ,Y \in \Gamma(D)$ and $\varphi \in G$, then $\calE$ is left invariant under $G$ as well. Restricting this subbundle to the identity $1$ of $G$ and considering the Lie algebra $\mathfrak{g}$, we get a subspace $\mathfrak{p}^1 = \calE_1 \subseteq \mathfrak{g}$ which is $K$-invariant and satisfies $\mathfrak{p}^1 \cap (\ker \pi_*) = 0$ and $\pi_* \mathfrak{p}^1 = D_{x_0}$.

\begin{proof}[Proof of Proposition~\ref{prop:Connection}]
Since $K$ is compact, its Lie algebra $\mathfrak{k}$ has a $K$-invariant complement $\mathfrak{p}$ in $\mathfrak{g}$, and by defining $\mathfrak{p}^1$ as in Lemma~\ref{lemma:Action}, we have a desired partial connection. This proves existence.

Next, we prove uniqueness. Let $\mathfrak{p}^1$ and $\mathfrak{q}^1$ be two $K$-invariant subspaces, transverse to $\mathfrak{k}$ and satisfying $\pi_* \mathfrak{p}^1 = \pi_* \mathfrak{q}^1 = D_{x_0}$. For any $v \in D_{x_0}$, write $A_v$ and $\hat A_v$ for the respective elements in $\mathfrak{p}^1$ and $\mathfrak{q}^1$ that project to $v$. Consider the map
\begin{equation} \label{zeta} \zeta: \mathfrak{p}^1 \to \mathfrak{k}, \qquad \zeta(A_v) = A_v - \hat A_v.\end{equation}
Then for any $a \in K$, $\Ad(a) \zeta = \zeta \Ad(a)$. We emphasize for the reader that by our assumption that $M$ is a model space, we have
$$K \cong \Ort(n),$$
making $G$ of dimension $\dim M + \frac{n(n-1)}{2}$. We observe that the $\Ad$-action of~$K$ on~$\mathfrak{k}$ is isomorphic to the usual adjoint representation of~$\Ort(n)$ on~$\ort(n)$. Furthermore, $K$ acts on~$\mathfrak{p}^1$ as in Lemma~\ref{lemma:Action}, which is isomorphic to the usual representation of $\Ort(n)$ on $\mathbb{R}^n$. These representations are irreducible and never isomorphic, see Lemma~\ref{lemma:Representation}, Appendix, so $\zeta = 0$. 
\end{proof}

\begin{remark} \label{re:Compare}
We can use the canonical partial connection to determine when two model spaces are isometric. For $j=1,2$, let $(M^{(j)}, D^{(j)}, g^{(j)})$ be a sub-Riemannian model space with isometry group $G^{(j)} = \Isom(M^{(j)})$. Let $\nabla^{(j)}$ be the canonical partial connection on $D^{(j)}$. Assume that there is an isometry $\Phi: M^{(1)} \to M^{(2)}$. Choose an arbitrary point $x_0 \in M^{(1)}$, define $y_0 = \Phi(x_0) \in M^{(2)}$ and
$$K^{(1)} = (G^{(1)})_{x_0}, \qquad K^{(2)} =  (G^{(2)})_{y_0}.$$
Let $\mathfrak{k}^{(j)}$ be the Lie algebra of $K^{(j)}$.
The map $\Phi$ induces a group isomorphism $\bar{\Phi}: G^{(1)} \to G^{(2)}$ defined by
$$\bar{\Phi}(\varphi) = \Phi \circ \varphi \circ \Phi^{-1}, \qquad \varphi \in G^{(1)}.$$
We remark that $\bar{\Phi} (K^{(1)}) = K^{(2)}$. Furthermore, let $\mathfrak{g}^{(j)}$ denote the Lie algebra of $G^{(j)}$. Use a choice of orthonormal frame of respectively $D^{(1)}_{x_0}$ and $D^{(2)}_{y_0}$ to identify $F^{\Ort}(D^{(j)})$ with $G^{(j)}$ and let $\mathfrak{p}^{(j)}$ be subspace of $\mathfrak{g}^{(j)}$ corresponding to the canonical partial connection $\nabla^{(j)}$. Define an inner product on $\mathfrak{p}^{(j)}$ by pulling back the inner product on respectively $D_{x_0}^{(1)}$ and~$D_{y_0}^{(2)}$. Since
$$\nabla_{\Ad(\Phi) X}^{(2)} \Ad(\Phi) Y = \Ad(\Phi) \nabla_X^{(1)} Y$$
and since $\Phi$ is an isometry, we must have that $\Psi := \bar{\Phi}_{*,\id}: \mathfrak{g}^{(1)} \to \mathfrak{g}^{(2)}$ maps~$\mathfrak{p}^{(1)}$ isometrically onto $\mathfrak{p}^{(2)}$. In conclusion, if $M^{(1)}$ is isometric to $M^{(2)}$, then there is a Lie algebra isomorphism $\Psi: \mathfrak{g}^{(1)} \to \mathfrak{g}^{(2)}$  mapping $\mathfrak{k}^{(1)}$ onto~$\mathfrak{k}^{(2)}$ and $\mathfrak{p}^{(1)}$ onto $\mathfrak{p}^{(2)}$ isometrically.
\end{remark}

\subsection{Holonomy of the canonical partial connections} \label{sec:Holonomy}
Since all model spaces have canonical partial connections, we will need to look at the holonomy and curvature of such a connection. We will use our material from~\cite{CGJK15}.

Let $\pi: E \to M$ be a vector bundle and let $\nabla$ be a partial connection on $E$ in the direction of $D$. Assume that $D$ is bracket-generating and equiregular of step~$r$. For any horizontal curve $\gamma: [0,1] \to M$, write $\ptr_t^\gamma: E_{\gamma(0)} \to E_{\gamma(t)}$ for the parallel transport along $\gamma$ with respect to $\nabla$. Then \emph{the horizontal holonomy group} of $\nabla$ at $x$ is given by
$$\Hol^\nabla(x) = \left\{ \ptr_1^\gamma \in \GL(E_x) \, : \, \begin{array}{c} \text{$\gamma: [0,1] \to M$ is a} \\ \text{horizontal loop based at $x$} \end{array} \right\},$$
where $\GL(E_x)$ denotes invertible linear maps of $E_x$.
The group $\Hol^{\nabla}(x)$ is a finite dimensional Lie group and it is connected if $M$ is simply connected. In order to determine this holonomy group, we need to introduce \emph{selectors}.

Consider the flag $0 = D^0 \subseteq D = D^1 \subseteq D^2 \subseteq \cdots \subseteq D^r= TM$ defined as in \eqref{flag}. For any $k =0,1,\dots,r$, let $\Ann(D^k)$ denote the subbundle of~$T^*M$ consisting of all covectors vanishing on $D^k$. A selector of $D$ is a two-vector valued one-form $\chi: TM \to \wedge^2 TM$, satisfying the following for every $k =1, \dots, r$:
\begin{enumerate}[\rm (i)]
\item $\chi(D^k) \subseteq \wedge^2 D^{k-1},$
\item for any $\alpha \in \Gamma(\Ann D^k)$ and $w \in D^{k+1}$, we have
$$\alpha(w) = - d\alpha(\chi(w)).$$
\end{enumerate}
Any equiregular subbundle have at least one selector, but this selector is in general not unique. However, each selector gives a unique way of extending partial connections to connections.

\begin{theorem} \label{th:CGJK}
For any partial connection $\nabla$ on $\pi: E \to M$ in the direction of $D$ and selector $\chi$ of $D$, there exists a unique affine connection $\bnabla = \nabla^\chi$ on $E$ such that $\bnabla_{|D} = \nabla$ and such that
$$R^{\bnabla}(\chi(\cdot)) = 0.$$
Furthermore, for any $x \in M$,
$$\Hol^{\nabla}(x) = \Hol^{\bnabla}(x).$$
\end{theorem}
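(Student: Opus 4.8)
The plan is to prove the two assertions of the theorem in turn: first the existence and uniqueness of $\bnabla = \nabla^\chi$, by a recursion along the flag \eqref{flag}, and then the equality of holonomy groups, by comparing orbits of two distributions on the frame bundle of $E$.

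For existence and uniqueness I would build $\bnabla_X e$ for $X \in \Gamma(D^k)$ by induction on $k$, starting from $\bnabla_X = \nabla_X$ on $D^1 = D$. The selector axioms pin down the only admissible extension: if $w \in \Gamma(D^{k+1})$ and $\chi(w) = \sum_i U_i \wedge V_i$ with $U_i, V_i \in \Gamma(D^k)$ (by axiom (i)), then axiom (ii), tested against all $\alpha \in \Gamma(\Ann D^k)$, forces $w \equiv \sum_i [U_i, V_i] \pmod{\Gamma(D^k)}$; writing $Z = w - \sum_i [U_i, V_i] \in \Gamma(D^k)$, the identity $R^{\bnabla}(\chi(w)) = 0$ together with the connection axioms leaves no choice but
$$\bnabla_w e := \sum_i \big( \bnabla_{U_i} \bnabla_{V_i} e - \bnabla_{V_i} \bnabla_{U_i} e \big) + \bnabla_Z e,$$
in which every operator on the right is already defined at level $k$. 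Uniqueness is immediate from this. For existence one verifies that the right-hand side does not depend on the chosen decomposition of $\chi(w)$ (nor on the resulting $Z$), and that $w \mapsto \bnabla_w e$ is then $C^\infty(M)$-linear and Leibniz in $e$; the $C^\infty$-linearity and Leibniz identities come out of the bracket identities $[fU,V] = f[U,V] - (Vf)U$ and the expansion of $[U,V](fe)$ exactly as expected, and $R^{\bnabla}(\chi(\cdot)) = 0$ then holds automatically, since with $\bnabla$ already known to be a connection one has $\sum_i R^{\bnabla}(U_i,V_i) = \sum_i(\bnabla_{U_i}\bnabla_{V_i} - \bnabla_{V_i}\bnabla_{U_i}) - \bnabla_{\sum_i[U_i,V_i]} = \bnabla_w - \bnabla_Z - \bnabla_{w - Z} = 0$.

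For the holonomy statement I would pass to the $\GL$-frame bundle $F(E) \xrightarrow{\pi} M$. There $\bnabla$ corresponds to a principal connection with horizontal distribution $H$, and $\nabla$ corresponds to the $\GL$-invariant sub-distribution $H^\nabla = (\pi_*|_H)^{-1}(D)$, the condition $\bnabla|_D = \nabla$ being exactly $H^\nabla \subseteq H$. Fix $p_0 \in \pi^{-1}(x)$; by the holonomy reduction theorem $\Hol^{\bnabla}(x)$ is the structure group of the holonomy bundle through $p_0$, i.e. of the orbit $\mathcal{O}^{\bnabla}$ of $p_0$ under $H$-horizontal curves, while (as in \cite{CGJK15}) $\Hol^\nabla(x)$ is the structure group of the orbit $\mathcal{O}^\nabla$ of $p_0$ under $H^\nabla$-horizontal curves. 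Since $H^\nabla \subseteq H$ we get $\mathcal{O}^\nabla \subseteq \mathcal{O}^{\bnabla}$ and hence $\Hol^\nabla(x) \subseteq \Hol^{\bnabla}(x)$. For the reverse inclusion I would show that $\mathcal{O}^\nabla$ is already tangent to $H$, so that $\mathcal{O}^{\bnabla} \subseteq \mathcal{O}^\nabla$ and the two coincide. Writing $\Delta = T\mathcal{O}^\nabla$ (an involutive distribution along $\mathcal{O}^\nabla$ containing $H^\nabla|_{\mathcal{O}^\nabla}$) and $H^{D^k} = H \cap (\pi_*)^{-1}(D^k)$, I would prove $H^{D^k}|_{\mathcal{O}^\nabla} \subseteq \Delta$ by induction on $k$: for the step, given $w \in D^{k+1}$ pick local sections with $\chi(w) = \sum_i U_i \wedge V_i$, $w = \sum_i [U_i, V_i] + Z$, take the $H$-horizontal lifts $\tilde U_i, \tilde V_i, \tilde Z$ (sections of $H^{D^k}$, hence tangent to $\mathcal{O}^\nabla$), and note that $\sum_i [\tilde U_i, \tilde V_i] + \tilde Z$ is tangent to $\mathcal{O}^\nabla$ by involutivity and projects to $w$; the $H$-horizontal lift $\tilde w$ of $w$ differs from it precisely by the vertical vector corresponding, in the frame $p$, to $\sum_i R^{\bnabla}(U_i,V_i) = R^{\bnabla}(\chi(w)) = 0$, so $\tilde w$ is tangent to $\mathcal{O}^\nabla$ as well. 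Taking $k = r$ gives $H|_{\mathcal{O}^\nabla} \subseteq \Delta$, which is what was needed.

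The step I expect to be the main obstacle is the well-definedness in the construction of $\bnabla$: selectors are not unique, a given $\chi(w)$ has many decompositions into decomposables, and the residue $Z$ depends on them, so one must check that the displayed formula for $\bnabla_w e$ is insensitive to all of these choices. This is where one really uses axiom (ii) together with the tensoriality of the Levi bracket $\Gamma(\wedge^2 D^k) \to \Gamma(TM/D^k)$, $U\wedge V \mapsto [U,V]\bmod D^k$, and it is the genuine content of the word ``unique'' in the statement. By contrast, the holonomy half is comparatively soft once one accepts the orbit (Stefan--Sussmann) description of $\Hol^\nabla$ and $\Hol^{\bnabla}$ and the standard dictionary between the curvature $R^{\bnabla}$ and the curvature $2$-form of the principal connection on $F(E)$.
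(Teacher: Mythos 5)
The paper itself gives no proof of Theorem~\ref{th:CGJK} --- it is quoted from \cite{CGJK15} --- so there is no internal argument to compare against, but your reconstruction follows essentially the route of that reference and is correct: the inductive extension of $\nabla$ along the flag \eqref{flag} via $\bnabla_w e = \sum_i(\bnabla_{U_i}\bnabla_{V_i}e-\bnabla_{V_i}\bnabla_{U_i}e)+\bnabla_Z e$, and the holonomy comparison via the Orbit Theorem on the frame bundle with the inductive inclusion $H^{D^k}|_{\mathcal{O}^\nabla}\subseteq T\mathcal{O}^\nabla$. The only point I would sharpen is that the independence of the displayed formula from the decomposition of $\chi(w)$ rests not just on the tensoriality of the Levi bracket (which only makes $Z$ a legitimate section of $D^k$) but on the tensoriality of the curvature operator $\sum_i(\bnabla_{U_i}\bnabla_{V_i}-\bnabla_{V_i}\bnabla_{U_i}-\bnabla_{[U_i,V_i]})$ of the already-constructed level-$k$ connection on $\wedge^2 D^k$.
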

In the special case when there exists an affine connection $\bnabla$ such that $\bnabla_{|D} = \nabla$ and such that $R^{\bnabla} = 0$, it follows from Theorem~\ref{th:CGJK} that $\bnabla = \nabla^\chi$ for any selector. Furthermore, if $M$ is simply connected, then $\Hol^\nabla(x) = \id_{E_x}$ from the Ambrose-Singer theorem.

For the canonical partial connection on model spaces there are only two possible cases for the horizontal holomy group.
\begin{proposition} \label{prop:HolCC}
Let $(M, D, g)$ be a sub-Riemannain model space with canonical partial connection $\nabla$. Then either $\Hol^\nabla(x)= \id_{D_x}$ for every $x \in M$ or $\Hol^\nabla(x) = \SO(D_x)$ for every $x \in M$.
\end{proposition}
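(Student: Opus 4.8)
The key observation is that the horizontal holonomy group $\Hol^\nabla(x)$ is, by Theorem~\ref{th:CGJK}, equal to the ordinary holonomy $\Hol^{\bnabla}(x)$ of any connection $\bnabla = \nabla^\chi$ extending $\nabla$ via a selector $\chi$. Since $\nabla$ is metric ($\nabla g = 0$ by \eqref{CanonicalC}), one can choose the extension $\bnabla$ to be metric as well (the construction in \cite{CGJK15} can be arranged to preserve $g$ on $D$, and in any case the parallel transport of $\nabla$ along horizontal curves is a linear isometry of the fibers of $D$, which is all that matters for $\Hol^\nabla$). Hence $\Hol^\nabla(x)$ is a closed connected subgroup of $\SO(D_x)\cong\SO(n)$ — connected because $M$ is simply connected, as recorded after Theorem~\ref{th:CGJK}. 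So it suffices to show this subgroup is either trivial or all of $\SO(n)$.

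The plan is to exploit the transitive action of $G = \Isom(M)$ to show that $\Hol^\nabla(x)$ is, up to conjugation, independent of $x$ and, crucially, \emph{normalized} by a large group. First, for any isometry $\varphi$ with $\varphi(x) = x'$, the relation $\nabla_{\varphi_*X}\varphi_*Y = \varphi_*\nabla_XY$ shows that $\varphi_*$ conjugates parallel transport along a horizontal loop at $x$ into parallel transport along its image loop at $x'$; since $\varphi$ is a bijection on horizontal loops (sending loops at $x$ to loops at $x'$), we get $\varphi_*\,\Hol^\nabla(x)\,\varphi_*^{-1} = \Hol^\nabla(x')$ inside $\Ort(D_x, D_{x'})$-conjugation. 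Taking $x' = x$ and letting $\varphi$ range over the stabilizer $K = G_x$, whose differential at $x$ realizes \emph{all} of $\Ort(D_x)$ by the model space axiom~(iii), we conclude that $\Hol^\nabla(x)$ is a normal subgroup of $\SO(D_x)$ invariant under the full $\Ort(D_x)$-conjugation action.

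Now I invoke the representation-theoretic input: a connected normal subgroup of $\SO(n)$ that is moreover invariant under conjugation by all of $\Ort(n)$ must be trivial or all of $\SO(n)$. For $n\ge 3$, $\SO(n)$ is either simple or (for $n=4$) a product of two copies of $\SO(3)$ that get swapped by an outer element of $\Ort(4)$, and in every case the only $\Ort(n)$-invariant connected normal subgroups are $\{1\}$ and $\SO(n)$; for $n\le 2$, $\SO(1)$ is already trivial and $\SO(2)$ is abelian with no proper $\Ort(2)$-invariant connected subgroups (conjugation by a reflection inverts it, and the only connected subgroup of $\SO(2)$ fixed by inversion setwise and closed is $\{1\}$ or $\SO(2)$). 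This dichotomy is exactly the statement. Finally I note $\Hol^\nabla(x)$ is either always trivial or always the full $\SO(D_x)$ as $x$ varies, since the conjugation argument above transports one case to the other.

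\medskip

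\noindent\textbf{Main obstacle.} The delicate point is the reduction to a \emph{metric} extension and the care needed in the $n=3$ and $n=4$ cases. For $n=3$ one must make sure that the cross-product ambiguity in the canonical connection (Proposition~\ref{prop:Connection}(b)) does not interfere — but it does not, because the canonical $\nabla$ is the one invariant under all of $\Isom(M)$, not just $\Isom_0(M)$, and reflections in $K$ force the holonomy to be $\Ort(3)$-conjugation-invariant, killing any $\SO(3)$-but-not-$\Ort(3)$-invariant possibility (of which there are none anyway in dimension 3). For $n=4$, the potential subgroups $\SO(3)\times\{1\}$ and $\{1\}\times\SO(3)$ inside $\SO(4)$ are normal in $\SO(4)$ and must be excluded; they are swapped by conjugation by an orientation-reversing element of $\Ort(4)$, so neither is $\Ort(4)$-invariant, and the diagonal-type copies are not normal — so the dichotomy survives. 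Writing this cleanly, rather than the bookkeeping of loops and base points, is where the real work lies.
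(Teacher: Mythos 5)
Your proof is correct, but it reaches the dichotomy by a genuinely different mechanism from the paper's. The paper lifts $\nabla$ to an $\Isom(M)$-invariant distribution $\calE$ on the frame bundle $F^{\Ort}(D)\cong\Isom(M)$, takes the subalgebra $\hat{\mathfrak{p}}$ generated by the corresponding subspace $\mathfrak{p}^1\subseteq\mathfrak{g}$, and notes that $\hat{\mathfrak{p}}\cap\mathfrak{k}$ is a $K$-invariant subspace of $\mathfrak{k}\cong\ort(n)$; irreducibility of the adjoint representation of $\Ort(n)$ (Lemma~\ref{lemma:Representation}) then forces $\calE$ to be either integrable or bracket-generating, and the Orbit Theorem together with simple connectedness converts that into the holonomy statement. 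You instead work directly with $\Hol^\nabla(x)$ as a connected subgroup of $\SO(D_x)$, use the $\Isom(M)$-invariance of $\nabla$ to show it is normalized by the full $\Ort(D_x)$-action of the stabilizer $K$ (your observation that any $q\in\Ort(D_x)$ is realized by an isometry necessarily fixing $x$ is right), and then classify the $\Ort(n)$-normalized connected subgroups of $\SO(n)$. The two arguments share the same representation-theoretic core: your classification is equivalent to saying that the Lie algebra of the holonomy group is an $\Ad(\Ort(n))$-invariant subspace of $\ort(n)$, hence $0$ or all of $\ort(n)$ by Lemma~\ref{lemma:Representation}. What your route buys is that it avoids the Orbit Theorem and the frame-bundle bookkeeping entirely; what it costs is the explicit low-dimensional case analysis, which you handle correctly --- for $n=4$ the two intermediate normal factors of $\SO(4)$ are swapped by a reflection, exactly the failure of $\Ort(4)$-irreducibility recorded in Remark~\ref{re:Hodge}. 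Two minor points: the detour through a metric selector-extension $\nabla^\chi$ is unnecessary, since $\Hol^\nabla(x)\subseteq\Ort(D_x)$ already follows from the second identity in \eqref{CanonicalC} applied along horizontal loops, as you yourself note; and holonomy groups need not be closed, but this does not matter, because a connected Lie subgroup normalized by $\Ort(D_x)$ has an $\Ad$-invariant Lie algebra, which is all your argument uses.
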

\begin{proof}
Let $\calE$ be the $\Ort(n)$-invariant subbundle of $TF^{\Ort}(D)$ corresponding to~$\nabla$. For any $f_0 \in F^{\Ort}(D)$, define
$$\mathcal{O}_{f_0} = \{ f \in F^{\Ort}(D) \, : \, \text{there is an $\calE$-horizontal curve connecting $f_0$ and $f$} \}.$$
By the Orbit Theorem \cite[Theorem~5.1]{AgSa04} these are imbedded submanifolds and for every $f \in \mathcal{O}_{f_0}$ and every $j \geq 1$
$$\{ Y(f) \, : \, Y \in \underline{\calE}^j\} \subseteq T_f\mathcal{O}_{f_0}.$$
Here, $\underline{\calE}^j$ is defined as in \eqref{Dunderline}. Furthermore, by definition, for any $f\in F^{\Ort}(D)_x$ we know that $\Hol^{\nabla}(x)$ is isomorphic to
$$ \{ a \in \Ort(n) \, : \,  f \cdot a \in \calO_{f} \}.$$

 Let $x_0 \in M$ be a chosen point. Choose $f_0 \in F^{\Ort}(D)_{x_0}$ and use this frame to identify $F^{\Ort}(D)$ with $G = \Isom(M)$. Define $K = G_{x_0}$ and let $\pi:G \to M$ be the map $\pi(\varphi) = \varphi(x_0)$. Let $\mathfrak{k}$ and $\mathfrak{g}$ denote the Lie algebra of respectively $K$ and~$G$.  Let $\mathfrak{p}^1$ be the $K$-invariant subspace of $\mathfrak{g}$ of rank $n$ such that $\calE_{\varphi} = \varphi \cdot \mathfrak{p}^1$, $\varphi \in G$. Let $\hat{ \mathfrak{p}}$ be the sub-algebra generated by $\mathfrak{p}^1$ which is also $K$-invariant. Then $\hat{\mathfrak{p}} \cap \mathfrak{k}$ is also $K$-invariant. Since the action of $K$ on $\mathfrak{k}$ is isomorphic to the adjoint representation of $\Ort(n)$ on $\ort(n)$, which is irreducible by Lemma~\ref{lemma:Representation}, we either have that $\mathfrak{g} = \hat{\mathfrak{p}} \oplus \mathfrak{k}$ or $\mathfrak{g} = \hat{\mathfrak{p}}$. It follows that either $
\calE$ is bracket-generating or integrable. If $\calE$ is bracket-generating, then $\calO_{f}$ is the connected component of $f$ in $F^{\Ort}(D)$, so $\Hol^{\nabla}(x) = \SO(D_x)$. If $\calE$ is integrable, $\Hol^\nabla(x)$ must be discrete, and since $M$ is simply connected, we have $\Hol^{\nabla}(x) = \id_{D_x}$.
\end{proof}

\subsection{Holonomy and Lie group structure} \label{sec:Lie}
In Riemannian geometry, the only model spaces with a Lie group structure for which the metric is invariant are the euclidean spaces $\sfSigma_n(0)$ and the 3-dimensional spheres $\sfSigma_3(\rho)$, $\rho > 0$. These two examples differ, however, in that all isometries preserving the identity of $\sfSigma_n(0)$ are Lie algebra automorphisms. This property fails for $\sfSigma_3(\rho)$ since the inversion map is an isometry but the group is not abelian. In this section, we will consider the relationship between trivial holonomy and group structure and leave examples similar to thee-dimensional spheres for the next section.

\begin{proposition} \label{prop:Lie}
Let $(M,D,g)$ be a model space and write $G = \Isom(M)$. Let $x_0$ be an arbitrary point. Then $\Hol^{\nabla}(x_0) =\id_{D_{x_0}}$ if and only if there exists a Lie group structure on $M$ such that $(D,g)$ is left invariant, $x_0$ is the identity and every $\varphi \in G_{x_0}$ is a Lie group automorphism.
\end{proposition}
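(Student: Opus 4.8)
\noindent\emph{Plan of proof.} The idea is to work entirely at the level of the Lie algebra $\mathfrak g$ of $G$, via the identification of the orthonormal frame bundle of $D$ with $G$ (using $F^{\Ort}(D)$ when $G=\Isom(M)$ and $F^{\SO}(D)$ when $G=\Isom_0(M)$, and a reference frame at $x_0$), exactly as in the discussion preceding the proof of Proposition~\ref{prop:Connection}. Under this identification $\nabla$ is encoded by a $K$-invariant subspace $\mathfrak p^1\subseteq\mathfrak g$ with $\mathfrak p^1\cap\mathfrak k=0$ and $\pi_*\mathfrak p^1=D_{x_0}$, where $K=G_{x_0}$ has Lie algebra $\mathfrak k$; write $\hat{\mathfrak p}$ for the Lie subalgebra of $\mathfrak g$ generated by $\mathfrak p^1$, which is again $K$-invariant. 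The proof of Proposition~\ref{prop:HolCC} already shows that $\Hol^\nabla(x_0)=\id_{D_{x_0}}$ precisely when $\mathfrak g=\hat{\mathfrak p}\oplus\mathfrak k$, the only other possibility being $\mathfrak g=\hat{\mathfrak p}$ (where $\Hol^\nabla(x_0)=\SO(D_{x_0})$). So the task reduces to showing that $\hat{\mathfrak p}$ is a $K$-invariant subalgebra complementary to $\mathfrak k$ if and only if $M$ carries the asserted Lie group structure.

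\noindent\emph{From trivial holonomy to the group structure.} Here I would assume $\mathfrak g=\hat{\mathfrak p}\oplus\mathfrak k$ and let $\hat P\le G$ be the connected subgroup with Lie algebra $\hat{\mathfrak p}$. Because $\hat{\mathfrak p}$ is $\Ad(K)$-invariant and $\hat P$ is connected, $K$ normalizes $\hat P$; and because $\hat{\mathfrak p}+\mathfrak k=\mathfrak g$, the subgroup $\hat PK$ is open in $G$, hence contains the identity component $G_0$, which acts transitively on the connected homogeneous space $M=G/K$. Thus $\hat P$ already acts transitively on $M$, and the orbit map $\rho\colon\hat P\to M$, $\varphi\mapsto\varphi(x_0)$, has discrete fibres, since its stabiliser $\hat P\cap K$ has Lie algebra $\hat{\mathfrak p}\cap\mathfrak k=0$; so $\rho$ is a covering map, and as $M$ is simply connected it is a diffeomorphism. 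Transporting the multiplication of $\hat P$ along $\rho$ makes $x_0$ the identity; the left translations of $M$ are then realised by elements of $\hat P\subseteq G=\Isom(M)$ and so are isometries, whence $(D,g)$ is left invariant; and for $\varphi\in K=G_{x_0}$ the identities $\varphi\hat P\varphi^{-1}=\hat P$ and $\varphi(x_0)=x_0$ give, after a short computation with cosets, that $\varphi$ is a group automorphism.

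\noindent\emph{From the group structure to trivial holonomy.} Conversely, suppose $M$ has such a structure. The left translations $L_m$ preserve $(D,g)$ and hence the Carnot--Carath\'eodory distance, so they are isometries; they form a connected subgroup $L\cong M$ of $G$ which is normalized by $G_{x_0}$ because $G_{x_0}$ acts by automorphisms. Since $L$ acts simply transitively, $G=L\rtimes K$, so $\mathfrak g=\mathfrak m\oplus\mathfrak k$ with $\mathfrak m$ a $K$-invariant subalgebra on which $\pi_*$ restricts to an isomorphism onto $T_{x_0}M$. The subspace $\mathfrak m^1:=\{A\in\mathfrak m:\pi_*A\in D_{x_0}\}$ is then $K$-invariant by Lemma~\ref{lemma:Action}, transverse to $\mathfrak k$, and projects onto $D_{x_0}$, so it defines a partial connection satisfying \eqref{CanonicalC} for every $\varphi\in G$, exactly as in the existence part of Proposition~\ref{prop:Connection}. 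By the uniqueness part of that proposition this connection is $\nabla$; that is, $\mathfrak p^1=\mathfrak m^1\subseteq\mathfrak m$. As $\mathfrak m$ is a subalgebra, $\hat{\mathfrak p}\subseteq\mathfrak m$, so $\hat{\mathfrak p}\cap\mathfrak k=0$, and the dichotomy of the first paragraph forces $\mathfrak g=\hat{\mathfrak p}\oplus\mathfrak k$, i.e.\ $\Hol^\nabla(x_0)=\id_{D_{x_0}}$. (If $\rank D=3$ and $G=\Isom_0(M)$, where uniqueness in Proposition~\ref{prop:Connection} is unavailable, one runs this argument with $\nabla$ the canonical partial connection.)

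\noindent\emph{Anticipated main obstacle.} The delicate point is the first implication, namely upgrading the algebraic splitting $\mathfrak g=\hat{\mathfrak p}\oplus\mathfrak k$ to a genuinely global simply transitive action of $\hat P$ on $M$: one must establish transitivity (through the open subgroup $\hat PK\supseteq G_0$) and triviality of the stabiliser $\hat P\cap K$ (discreteness from $\hat{\mathfrak p}\cap\mathfrak k=0$, then triviality from simple connectedness of $M$), so that $\rho$ becomes a diffeomorphism rather than merely a local one. Once $M\cong\hat P$ is established, verifying the three properties of the transported group law is routine and rests entirely on the fact that $K$ normalizes $\hat P$, which is exactly the $K$-invariance of $\hat{\mathfrak p}$.
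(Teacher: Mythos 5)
Your proof is correct and follows essentially the same route as the paper's: both reduce the statement to the dichotomy $\mathfrak g=\hat{\mathfrak p}\oplus\mathfrak k$ versus $\mathfrak g=\hat{\mathfrak p}$ established in the proof of Proposition~\ref{prop:HolCC} and then realize $M$ as the connected subgroup integrating $\hat{\mathfrak p}$; you merely supply the transitivity, covering-space and normalization details that the paper compresses into a single sentence. The only divergence is in the converse direction, where the paper checks directly that the left-invariant connection $\nabla^l$ satisfies \eqref{CanonicalC} and has trivial holonomy while you reach the same conclusion via the uniqueness part of Proposition~\ref{prop:Connection} -- a cosmetic difference, and the residual ambiguity in the case $\rank D=3$, $G=\Isom_0(M)$ that you flag in your parenthetical is equally present, and equally unaddressed, in the paper's own argument.
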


\begin{proof}
Assume that $M$ has a Lie group structure with identity $1 = x_0 \in M$, that $(D,g)$ is left invariant and that every $\varphi \in G_{x_0}$ is a Lie group automorphism. Define $\nabla^l$ on $D$ such that all left invariant vector fields are parallel. This obviously satisfies $\Hol^{\nabla^l}(x_0) = \id_{D_{x_0}}$ and is invariant under $G$.

Conversely, suppose that $\Hol^{\nabla}(x_0) =\id_{D_{x_0}}$ for a connection invariant under $G$. By the proof of Proposition~\ref{prop:HolCC}, the Lie algebra $\mathfrak{g}$ of $G$ has decomposition $\mathfrak{g} = \hat{\mathfrak{p}} \oplus \mathfrak{k}$ into $K$-invariant vector spaces with $[\hat{\mathfrak{p}}, \hat{\mathfrak{p}}] \subseteq \hat{\mathfrak{p}}$ and where $\mathfrak{k}$ is the Lie algebra of $K = G_{x_0}$. Hence, every element in $K$ is a Lie group automorphism and $M$ is the Lie group of $\hat {\mathfrak{p}}$.
\end{proof}

\subsection{Horizontal bundles of rank~3 and group structure}
If we want to look at model spaces with Lie group structures where only the orientation-preserving isometries are group isomorphisms, such as $\sfSigma_3(\rho)$, $\rho >0$, we need some special considerations for the case of $\rank D =3$, as this condition allows one to define a cross product. Let $\mu \in \Gamma(\wedge^3 D)$ be any section satisfying $|\mu|_g=1$, which exists by Remark~\ref{re:orientable}. For any pair $v_i,w_i\in D_x$, $x \in M$, $i=1,2$, define $w_1 \times w_2 \in D_x$ by
$$v_1 \wedge v_2 \wedge (w_1 \times w_2) = \langle v_1 \wedge v_2, w_1 \wedge w_2 \rangle_g \mu(x).$$
Our choice of $\mu$ only determines the sign of the cross product. In particular, the collection of maps $(w_1, w_2) \mapsto c w_1 \times w_2$, $c \in \mathbb{R}$ does not depend on the choice of $\mu$.

\begin{proposition} \label{prop:OrientationPre}
Let $(M, D, g)$ be a sub-Riemannian model space with canonical partial connection $\nabla$. Let $G^0$ denote the identity component of $\Isom(M)$, i.e. isometries that preserves orientation. Let $x_0$ be an arbitrary point and let $G_{x_0}^0$ be the group of elements in $G^0$ fixing $x_0$. 
\begin{enumerate}[\rm (a)]
\item If $\rank D \neq 3$, then the only partial connection that is compatible with $g$ and invariant under $G^0$ is $\nabla$. Furthermore, $M$ has a Lie group structure such that $(g,D)$ is left invariant, $x_0$ is the identity and all elements in $G_{x_0}^0$ are Lie algebra isometries if and only if $\Hol^\nabla(x_0) = \id_{D_{x_0}}$.
\item If $\rank D = 3$, then there is a one-parameter family $\nabla^c$, $c \in \mathbb{R}$, of  compatible partial connections invariant under $G^0$. These are given by
$$\nabla^c_X Y = \nabla_X Y + c X \times Y, \qquad X,Y \in \Gamma(D),$$
 where $\times$ denotes any cross product on $D$. Furthermore, $M$ has a Lie group structure such that $(g,D)$ is left invariant, $x_0$ is the identity and all elements in $G_{x_0}^0$ are Lie algebra isometries if and only if $\Hol^{\nabla^c}(x_0) = \id_{D_{x_0}}$ for some $c \in \mathbb{R}$.
\end{enumerate}
\end{proposition}

\begin{proof}[Proof of Proposition~\ref{prop:OrientationPre}]
We begin by choosing an orientation of $D$, and consider the oriented orthonormal frame bundle
$$\SO(n) \to F^{\SO}(D) \to M,$$
where $F^{\SO}(D)_x = \SO(\mathbb{R}^n, D_x)$. Similar relations for invariant partial connections hold on $F^{\SO}(D)$ with $G^0$ in the place of $\Isom(M)$. Hence, if we modify the proof of Proposition~\ref{prop:Connection}, by replacing $F^{\Ort}(D)$, $G = \Isom(M)$ and $K = G_{x_0}$ with $F^{\SO}(D)$, $G^0 = \Isom_0(M)$ and $K^0= G^0_{x_0}$, respectively, then we can identify maps $\zeta$ as in \eqref{zeta} with morphisms of representations of $\SO(n)$ on respectively $\mathbb{R}^n$ and $\mathfrak{\ort}(n)$. We will have $\zeta = 0$ whenever $n \neq 3$, since there are still no morphisms between the standard and the adjoint representations of $\SO(n)$, see Remark~\ref{re:Hodge}, Appendix, for details. Following the same remark for the case $n =3$, the standard representation and the adjoint representation of $\SO(3)$ are isomorphic through the map $x \wedge y \mapsto x \times y$.

The remainder of the proof is identical to Proposition~\ref{prop:Lie}.
\end{proof}

\section{Metric tangent cones and Carnot-groups} \label{sec:Carnot}
\subsection{Carnot model spaces} \label{sec:Carnot1}
Let $(N, D, g)$ be \emph{a Carnot group} of step~$r$. In other words, $N$ is a nilpotent, connected, simply connected Lie group whose Lie algebra $\mathfrak{n}$ has a stratification $\mathfrak{n} = \mathfrak{n}_1 \oplus \cdots \oplus \mathfrak{n}_r$ satisfying
\begin{equation} \label{stratification} [\mathfrak{n}_1, \mathfrak{n}_j ] = \left\{ \begin{array}{ll} \mathfrak{n}_{j+1} & \text{for $1 \leq j < r$}, \\
0 & \text{for $j = r$}.
  \end{array} \right. \end{equation}
The sub-Riemannian structure $(D, g)$ is defined by left translation of $\mathfrak{n}_1$ with some inner product. Because of this left invariance, a Carnot group is a model space if and only if for every $q \in \Ort(\mathfrak{n}_1)$ there exists an isometry $\varphi_q$ satisfying $\varphi_q(1)= 1$ and ${\varphi_q}_*|\mathfrak{n}_1 =q$. By \cite[Theorem~1.1]{LDOt16} all such isometries are also Lie group automorphisms. We will refer to Carnot groups that are also model spaces as \emph{Carnot model spaces}.

The free nilpotent Lie groups form a class of Carnot model spaces. For a finite set $\{A_1, \dots, A_{n} \}$ let $\mathfrak{f} = \mathsf{f}[ A_1, \dots, A_{n}; r ]$ denote the corresponding free nilpotent Lie algebra of step~$r$, i.e. the free Lie algebra with generators $A_1, \dots, A_{n}$ divided out by the ideal of all brackets of length $\geq r+1$. This algebra has a natural grading $\mathfrak{f} = \mathfrak{f}_1 \oplus \cdots \oplus \mathfrak{f}_r$, where $\mathfrak{f}_1$ is spanned by elements $A_1, \dots, A_{n}$ and for $j \geq 2,$
$$\mathfrak{f}_{j} = \spn \left\{ [A_{i_1}, [ A_{i_2}, [\cdots, [A_{i_{j-1}}, A_{i_j} ]] \cdots ]] \, : \, 1 \leq i_k \leq n \right\}.$$

All such Lie algebras generated by $n$ elements will be isomorphic. For this reason, we will write $\mathsf{f}[A_1, \dots, A_n;r]$ simply as $\mathfrak{f} = \mathsf{f}_{n,r}$. Remark that for any invertible linear map $q \in \GL(\mathfrak{f}_1)$, there is a corresponding Lie algebra automorphism $\psi(q)$ of $\mathfrak{f}$ that preserves the grading and satisfies $\psi(q)|\mathfrak{f}_1 = q$ since the Lie algebras generated by $A_1, \dots, A_n$ and $qA_1, \dots, q A_n$ are isomorphic.

Define an inner product on $\mathfrak{f}_1$ such that the elements $A_1, \dots, A_n$ form an orthonormal basis. Write $\sfF_{n,r}$ for the corresponding connected, simply connected Lie group of $\mathsf{f}_{n,r}$ with a sub-Riemannian structure $(E, h)$ given by left translation of $\mathfrak{f}_1$ and its inner product. Write $\varphi_q$ for the Lie group automorphism corresponding to~$\psi(q)$.  Then $\varphi_q$ is an isometry whenever $q \in \Ort(\mathfrak{f}_1)$ and hence the Carnot group $(\sfF_{n,r},E,h)$ is a sub-Riemannian model space.

Next, let $(N,D,g)$ be any Carnot group with Lie algebra~$\mathfrak{n}$. Let $r$ denote its step and define $n = \rank D$. Let $A_1, \dotsm, A_n$ be an orthonormal basis of $\mathfrak{n}_1$ and consider $\sff_{n,r} = \mathfrak{f}_1 \oplus \cdots \oplus \mathfrak{f}_r$ as the free nilpotent Lie algebra of step $r$ generated by these elements. If we let $(\sfF_{n,r}, E, h)$ be the corresponding sub-Riemannian model space, then the Lie group $N$ can be considered as $\sfF_{n,r}$ divided out by some additional relations. Hence, there is a surjective group homomorphism
$$\Phi: \sfF_{n,r} \to N,$$
such that $\Phi_* |_{E_x} : E_x \to D_{\Phi(x)}$ is a linear isometry for every $x$. In particular, $\Phi_{*,1} : \sff_{n,r} \to \mathfrak{n}_1$ is a Lie algebra homomorphism and we have a canonical identification of $\mathfrak{n}_1$ and $\mathfrak{f}_1$ by construction. This allows us to lift any Lie algebra isomorphism $\tilde \psi$ to $\sff_{n,r}$, since, from the stratification of the Lie algebra, every such map is uniquely determined by $q = \tilde \psi | \mathfrak{n}_1$ and we must have
$$\Phi_{*,1} \circ \psi(q) = \tilde \psi \circ \Phi_{*,1}.$$
Conversely, any isomorphism $\psi(q)$ of $\sff_{n,r}$ corresponds to an isometry of $\mathfrak{n}$ only if it preserves the ideal $\mathfrak{a} = \ker \Phi_{*,1}$. In order for $N$ to be a model space, this ideal has to be preserved by every $q \in \Ort(\mathfrak{f}_1)$.

In conclusion, let $\Aut \mathfrak{g}$ denote the group of Lie algebra automorphism of $\mathfrak{g}$. Then the study of Carnot model spaces reduces to studying ideals $\mathfrak{a}$ of $\sff_{n,r}$ that are also sub-representations of
$$\psi: \Ort(\mathfrak{f_1}) \to \Aut \mathsf{f}_{n,r}, \quad q \mapsto \psi(q).$$

\begin{example} \label{ex:Step2}
Let us first consider $\mathfrak{f} = \mathsf{f}_{n,2}$. This can be seen as the vector space $\mathbb{R}^n \oplus \wedge^2 \mathbb{R}^2$, where $\wedge^2 \mathbb{R}^n$ is the center and
$$[x,y] = x \wedge y \qquad \text{for any $x,y \in \mathbb{R}^n$.}$$
The induced action of $\psi(q)$ on $\sff_{n,2}$ gives us the usual representation of $\Ort(n)$ on $\mathfrak{f}_2 = \wedge^2 \mathbb{R}^n$. By Lemma~\ref{lemma:Representation} this representation is irreducible and so $\sfF_{n,2}$ is the only Carnot group of step~$2$ that is also a model space. 
\end{example}

\begin{example} \label{ex:FirstC}
Consider $\mathfrak{f} = \mathsf{f}_{n,3}= \mathfrak{f}_1 \oplus \mathfrak{f}_2 \oplus \mathfrak{f}_3$. Identify $\mathfrak{f}_1$ and $\mathfrak{f}_2$ with the vector spaces $\mathbb{R}^n$ and $\ort(n)$, such that for $x, y \in \mathfrak{f}_1 = \mathbb{R}^n$,
$$[x,y] = x \wedge y \in \mathfrak{f}_2 = \ort(n).$$
For any $q \in \Ort(\mathfrak{f}_1)$, the action of $\psi(q)$ on $\mathfrak{f}_1$ and $\mathfrak{f}_2$ is given by
\begin{equation} \label{ActionPsiQ} \psi(q) x = qx, \qquad \psi(q) A = \Ad(q) A, \qquad x \in \mathfrak{f}_1, A \in \mathfrak{f}_2.\end{equation}
Define $\mathfrak{a} \subseteq \mathfrak{f}_3$ by
$$\mathfrak{a} = \left\{ \sum_{k=1}^j [A_{k}, x_{k}] \, :  \, \begin{array}{c} j \geq 1, x_{k} \in \mathfrak{f}_1, A_{k} \in \mathfrak{f}_2 \\
 \sum_{k=1}^j A_{k} x_{k}= 0 \end{array} \right\}.$$
This ideal is invariant under $\psi(q)$ by \eqref{ActionPsiQ}, strictly contained in $\mathfrak{f}_3$ and is nonzero if $n \geq 3$. Write $\sfc_{n,3} := \mathsf{f}_{n,3}/\mathfrak{a}$. We can then identify $\sfc_{n,3}$ with the vector space $\mathbb{R}^n \oplus \ort(n) \oplus \mathbb{R}^n$ with non-vanishing brackets
$$[(x, 0, 0), (y,0,0) ] = (0, x \wedge y, 0), \quad [(0, A, 0), (x, 0,0)] = (0, 0, Ax),$$
for $x,y \in \mathbb{R}^n$ and $A \in \ort(n)$. If we let $\sfC_{n,3}$ denote the corresponding connected, simply connected Lie group and give it a sub-Riemannian structure $(D,g)$ by left translation of elements $(x, 0, 0)$, $x \in \mathbb{R}^n$ with the standard euclidean metric. Then $\sfC_{n,3}$ is a sub-Riemannian model space.
\end{example}

\begin{example} \label{ex:Rol}
We will elaborate on Example~\ref{ex:FirstC}. For any $n \geq 2$ and $r \geq 1$, define the stratified Lie algebra $\mathfrak{c} = \sfc_{n,r} = \mathfrak{c}_1 \oplus \cdots \oplus \mathfrak{c}_r$ as follows. Write $\mathfrak{a}_j = \mathfrak{c}_{2j -1}$ and $\mathfrak{b}_j = \mathfrak{c}_{2j}$. Each $\mathfrak{a}_j$ equals $\mathbb{R}^n$ as a vector space, while each $\mathfrak{b}_j$ equals $\mathfrak{o}(n)$ as a vector space. For each $x \in \mathbb{R}^n$, write $x^{(j)}$ for the element in $\mathfrak{c}$ whose component in $\mathfrak{a}_j$ is equal to $x$, while all other components are zero. For any $A \in \ort(n)$, define $A^{(j)} \in \mathfrak{b}_j \subseteq \mathfrak{c}$ analogously. If $2j-1 > r$ (respectively $2j >r$), we define $x^{(j)} = 0$ (respectively $A^{(j)} = 0$). The Lie brackets on $\mathfrak{c}$ are then determined by relations
\begin{equation} \label{RolNil}
\left\{ \begin{array}{rcl}
{[x^{(i)}, y^{(j)}]} & =& (x \wedge y)^{(i+j-1)}, \\
{[A^{(i)} , x^{(j)} ]} & = & (Ax)^{(i+j)} , \\
{[A^{(i)}, B^{(j)} ]} & = & [A, B]^{(i+j)},
\end{array} \right.
\end{equation}
for any $x, y \in \mathbb{R}^n$ and $A,B \in \ort(n)$. These relations make $\mathfrak{c}$ a nilpotent, stratified Lie algebra of step $r$. Give $\mathfrak{c}_1$ an inner product corresponding to the standard inner product on $\mathbb{R}^n$. The linear maps $\psi(q)$, $q \in \Ort(n)$, given by
$$\psi(q) x^{(j)} = (qx)^{(j)}, \qquad \psi(q) A^{(j)} = (\Ad(q) A)^{(j)},$$
are Lie algebra automorphisms that preserve the grading and inner product on $\mathbb{R}^n$. Hence, the corresponding connected, simply connected group is a sub-Riemannian model space, which we denote by $\sfC_{n,r}$. Remark that $\sfC_{n,2}$ is isometric to $\sfF_{n,2}$, while $\sfC_{n,3}$ is only isometric to $\sfF_{n,3}$ for the special case of $n = 2$. We use this notation since $\sfC_{2,3}$ is the Cartan group.
\end{example}

\begin{example} \label{ex:list}
\begin{enumerate}[\rm (a)]
\item The Engel group is the Carnot group whose stratified Lie algebra $\mathfrak{n} = \mathfrak{n}_1 \oplus \mathfrak{n}_2 \oplus \mathfrak{n}_3$ is given by
$$\mathfrak{n}_1 = \spn\{ X_1, X_2\}, \quad \mathfrak{n}_2 = \spn\{ Z_1\}, \quad \mathfrak{n}_3 = \spn \{Z_2\}.$$
$$[X_1, X_2] = Z_1, \quad [X_1, Z_1] = Z_2,$$
$$[Z_2, Z_1] = [Z_2, X_1] = [Z_2, X_2] = [Z_1, X_2] = 0.$$
This is not a sub-Riemannian model space. We see that the isometry
$$q: \mathfrak{n}_1 \to~\mathfrak{n}_1, \qquad qX_1 = X_2, \qquad qX_2= - X_1,$$
does not correspond to any Lie algebra isomorphism.
\item Consider the $n$-th Heisenberg group $ H[n]$ whose stratified Lie algebra $\mathfrak{h}[n] = \mathfrak{h}_1 \oplus \mathfrak{h}_2$ satisfies relations
$$\mathfrak{h}_1 = \spn \{ X_i, Y_i \, :\, i=1, \dots, n\}, \quad \mathfrak{h}_2 = \spn \{ Z\},$$
$$0 = [Z, X_i] = [Z,Y_i] = [X_i, X_j] = [Y_i, Y_j], \qquad [X_i, Y_j] = \delta_{ij} Z.$$
It is simple to verify that $H[n]$ is a model space if and only if $n =2$. In the latter case, $H[2]$ is isometric to $\sfF_{2,2}$.
\end{enumerate}
\end{example}

\subsection{Model spaces and their tangent cones} \label{sec:Tangent}
Let $(M,D, g)$ be a sub-Riemannian manifold with a bracket-generating, equiregular subbundle $D$ and let $d_M$ be the Carnot-Carath\'eodory metric on $M$. Define subbundles $D^j$, $0 \leq j \leq r$ as in \eqref{flag} and use the convention $D^j = TM$ whenever $j > r$. For a fixed $x \in M$, define the following nilpotent Lie algebra $\nil(x)$.
\begin{enumerate}[\rm (i)]
\item As a vector space, $\nil(x) = \mathfrak{n}_1 \oplus \cdots \oplus \mathfrak{n}_r$ with $\mathfrak{n}_j := D^j_x/ D^{j-1}_x$.
\item If $A = [v] \in \mathfrak{n}^i$ and $B = [w] \in \mathfrak{n}^j$ denote the equivalent classes of $v \in D^i_x$ and $w \in D^j_x$, respectively, we define
$$[A, B] := [X, Y](x) \quad \text{mod } D^{i+j-1},$$
$$ \quad X \in \Gamma(D^i), Y \in \Gamma(D^j), \quad X(x) = v, \quad Y(x) = w.$$
\end{enumerate}
The subspace $\mathfrak{n}_1 = D^1_{x_0}$ comes equipped with an inner product. Let $\Nil(x)$ be the simply connected Lie group of $\nil(x)$ and define a sub-Riemannian structure $(E,h)$ on~$\Nil(x)$ by left translation of $\mathfrak{n}_1$ and its inner product. If $d_{\Nil(x)}$ is the Carnot-Carath\'eodory metric of $(E, h)$ then $(\Nil(x), d_{\Nil(x)}, 1)$ is the tangent cone of the metric space $(M, d_M)$ at $x$, see \cite{Bel96,LeD16}.

Let $\varphi: (M,D, g) \to (\tilde M, \tilde D, \tilde g)$ be a diffeomorphism map between two sub-Riemannian manifolds such that $\varphi_* D \subseteq \tilde D$. Then for any $x \in M$, there is a corresponding map
$$\Nil_x \varphi: \Nil(x) \to \Nil(\varphi(x)),$$
defined as the Lie group homomorphism such that $(\Nil_x \varphi)_{*,1}: \nil(x) \to \nil(\varphi(x))$ equals
$$(\Nil_x \varphi)_* [v] = [\varphi_* v], \qquad [v] \in D^j_{x}/D^{j-1}_x,$$
This is well defined since $\varphi_* D^j \subseteq \tilde D^j$ by Remark~\ref{re:Dj}.

\begin{proposition}
Let $(M, D, g)$ be a model space and $x\in M$ any point. Then the tangent cone $\Nil(x)$ is a model space as well. In particular, the growth vector of any sub-Riemannian model space equals that of a Carnot group model space.
\end{proposition}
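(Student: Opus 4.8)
The plan is to verify directly the three defining conditions of a sub-Riemannian model space for the triple $(\Nil(x), E, h)$, the only substantial point being condition (iii). Conditions (i) and (ii) are immediate: $\Nil(x)$ is by definition the simply connected Lie group of the nilpotent Lie algebra $\nil(x)$, hence connected and simply connected; and since $\nil(x) = \mathfrak{n}_1 \oplus \cdots \oplus \mathfrak{n}_r$ is stratified with $\mathfrak{n}_1$ generating, the left-invariant bundle $E$ is bracket-generating. Moreover $\Nil(x)$ is a Carnot group, so by the characterisation of Carnot model spaces recalled at the start of Section~\ref{sec:Carnot} it suffices, in order to establish (iii), to produce for every $q \in \Ort(\mathfrak{n}_1)$ an isometry of $\Nil(x)$ fixing the identity $1$ whose differential restricts to $q$ on $\mathfrak{n}_1$; an arbitrary pair $(y,y')$ and $q \in \Ort(E_y, E_{y'})$ is then reduced to this case by pre- and post-composing with the left translations $L_{y^{-1}}$ and $L_{y'}$, which are isometries of $\Nil(x)$ because $h$ is left invariant.

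Next I would identify $\mathfrak{n}_1 = D^1_x = D_x$ and, for a given $q \in \Ort(D_x)$, invoke the model space property of $M$ to obtain an isometry $\varphi\colon M \to M$ with $\varphi_*|D_x = q$. Since $\varphi_*|D_x$ takes values in $D_{\varphi(x)}$ while $q$ takes values in $D_x$, we must have $\varphi(x) = x$. By Remark~\ref{re:Dj} this forces $\varphi_* D^j_x = D^j_x$ for all $j$, so $\varphi$ induces the Lie group homomorphism $\Nil_x \varphi\colon \Nil(x) \to \Nil(x)$ of Section~\ref{sec:Tangent}, and applying the same construction to $\varphi^{-1}$ exhibits an inverse, so $\Nil_x\varphi$ is a Lie group automorphism. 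Its differential at $1$ sends the class $[v] \in D^j_x/D^{j-1}_x$ to $[\varphi_* v]$; in particular it preserves the grading of $\nil(x)$ and acts on $\mathfrak{n}_1 = D_x$ precisely as $q$.

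It then remains to check that $\Nil_x\varphi$ is an isometry of $(\Nil(x), E, h)$. This is the one step that is not purely formal, but it is elementary: a grading-preserving Lie algebra automorphism $\psi$ of $\nil(x)$ with $\psi|\mathfrak{n}_1 \in \Ort(\mathfrak{n}_1)$ integrates to a Lie group automorphism $\Psi$ carrying left-invariant vector fields to left-invariant vector fields, whence $\Psi_* E = E$, and $\langle \Psi_* v, \Psi_* w\rangle_h = \langle \psi v, \psi w\rangle_h = \langle v, w\rangle_h$ for $v,w \in \mathfrak{n}_1$ by left invariance of $h$; thus $\Psi$ maps horizontal curves to horizontal curves of the same length and preserves the Carnot-Carath\'eodory distance $d_{\Nil(x)}$. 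Applying this with $\psi = (\Nil_x\varphi)_{*,1}$ completes the verification of (iii) and shows that $\Nil(x)$ is a model space, and hence a Carnot model space.

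Finally, for the concluding assertion, I would observe that by construction $n_j(x) = \dim D^j_x = \sum_{i\le j}\dim\mathfrak{n}_i$, which is exactly the $j$-th entry of the growth vector of the Carnot group $\Nil(x)$; and since a model space is in particular a homogeneous space, its growth vector is constant (Section~\ref{sec:HomSpaces}), so the growth vector of $M$ coincides with that of the Carnot model space $\Nil(x)$. I expect the only genuine obstacle to be a psychological one — keeping the functoriality of $\Nil_x(\blank)$, the reduction via left translations, and the orthogonality bookkeeping straight — together with the small observation that the model space property of $M$ automatically forces the chosen isometry to fix the base point $x$.
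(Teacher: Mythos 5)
Your proposal is correct and follows essentially the same route as the paper: given $q\in \Ort(D_x)$, take the isometry $\varphi_q$ of $M$ with ${\varphi_q}_*|D_x=q$ (necessarily fixing $x$) and observe that $\Nil_x\varphi_q$ is a grading-preserving automorphism of $\Nil(x)$ restricting to $q$ on $\mathfrak{n}_1$, which by the left-invariance characterization of Carnot model spaces is exactly what is needed. You merely spell out the reduction by left translations, the verification that such an automorphism is an isometry, and the growth-vector remark, all of which the paper leaves implicit.
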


\begin{proof}
$\Nil(x)$ is a model space if and only if for every $q \in \Ort(\mathfrak{n}_1) = \Ort(D_x)$ there exists a Lie group automorphism $\phi:\Nil(x) \to \Nil(x)$ with $\phi_* |\mathfrak{n}_1 = q$. For every $q \in \Ort(D_x)$, define $\varphi_q: M \to M$ as the isometry such that ${\varphi_q}_* |D_x  = q$. Choosing $\phi = \Nil_x \varphi_q$ gives us the desired isometry of $\Nil(x)$.
\end{proof}

By the same argument as in the above proof, we get that any $\varphi \in \Isom(M)$ induces an isometry $\Nil_x \varphi: \Nil(x) \to \Nil(\varphi(x))$. In particular, all of the spaces $\Nil(x)$, $x \in M$ are isometric. We will therefore write this space simply as $\Nil(M)$. We will call this space the tangent cone or nilpotentization of $M$. Observe that $\Nil(\lambda M) = \Nil(M)$.

\section{Model spaces with $\sfF_{n,r}$ as their tangent cone} \label{sec:Free}
\subsection{Loose model spaces}
In Section~\ref{sec:Frame} and Section~\ref{sec:RolSum} we will consider operations on model spaces that do not necessarily preserve the bracket-generating condition. We will therefore make the following definition.
\begin{definition}
Let $(M,D, g)$ be the sub-Riemannian manifold where $D$ is not necessarily bracket-generating.
\begin{enumerate}[\rm (i)]
\item We say that $\varphi: M \to M$ is a loose isometry if $\varphi$ is a diffeomorphism such that
$$\varphi_* D = D, \qquad \langle \varphi_* v, \varphi_* w \rangle_g = \langle v, w \rangle_g,$$
for any $v,w \in D$.
\item If $M$ has a smooth action of a Lie group $G$, we will say that~$G$ acts loosely isometric if for every $\varphi \in G$, the map $x \mapsto \varphi \cdot x$ is a loose isometry.
\item We say that $M$ is a loose homogeneous space if there is a Lie group~$G$ acting transitively and loosely isometric on $M$.
\item We say that $M$ is a loose model space if there is a Lie group $G$ acting loosely isometric on $M$ such that for every $x,y \in M$ and $q \in \Ort(D_x, D_y)$ we have $\varphi \cdot v =qv$ for some $\varphi \in G$ and any $v \in D_x$.
\end{enumerate}
\end{definition}

Even though loose homogeneous spaces are not metric spaces in general, we obtain true homogeneous spaces by restricting to each orbit.
\begin{lemma} \label{lemma:Loose}
Let $(M, D, g)$ be a loose homogeneous space.
For any $x \in M$, define
$$\calO_x = \{ y\in M \, : \,\text{there is a horizontal curve connecting $x$ and $y$} \}.$$
The following then holds.
\begin{enumerate}[\rm (a)]
\item The subbundle $D$ is equiregular.
\item For any $x \in M$, $(\calO_x, D|\calO_x, g|\calO_x)$ is a sub-Riemannian homogeneous space. Furthermore, all of these spaces are isometric.
\item If $(M, D, g)$ is a loose model space, then the universal cover $\tilde \calO_x$ of $\calO_x$ with the lifted sub-Riemannian structure is a model space for any $x \in M$. Furthermore, all of these model spaces are isometric.
\end{enumerate}
\end{lemma}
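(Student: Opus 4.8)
The plan is to treat the three parts in order, with part~(b) carrying the bulk of the argument; throughout let $G$ be a Lie group acting transitively and loosely isometrically on $(M,D,g)$. For (a) I would simply repeat the reasoning from the opening of Section~\ref{sec:HomSpaces}. Since each $\varphi\in G$ satisfies $\varphi_*D=D$, applying Remark~\ref{re:Dj} to $\varphi$ and to $\varphi^{-1}$ gives $\Ad(\varphi)\,\underline{D}^j=\underline{D}^j$ for every $j$; as $(\Ad(\varphi)Y)(\varphi(x))=\varphi_*(Y(x))$ and $\varphi_*\colon T_xM\to T_{\varphi(x)}M$ is a linear isomorphism, the growth vector satisfies $\underline{n}(\varphi(x))=\underline{n}(x)$, so transitivity forces $\underline{n}$ to be constant and $D$ to be equiregular, say of step~$r$.

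For (b) the heart of the matter is to recognize $\calO_x$ as a leaf of a foliation. Because $D$ is equiregular of step~$r$, the bundle $D^r$ from \eqref{flag} is a subbundle closed under the bracket (since $[\underline{D}^r,\underline{D}^r]\subseteq\underline{D}^{2r}=\underline{D}^r$), hence involutive; let $L_x$ be its leaf through $x$. As $D\subseteq D^r=TL_x$, every horizontal curve issuing from $x$ stays in $L_x$, so $\calO_x\subseteq L_x$; conversely $D|_{L_x}$ is bracket-generating on $L_x$ (the derived flag of $D|_{L_x}$ is $D^k|_{L_x}$, which fills $TL_x$ at $k=r$), so Chow--Rashevskii gives $L_x\subseteq\calO_x$. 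Thus $\calO_x=L_x$ is a connected immersed submanifold on which $D|_{\calO_x}$ is a bracket-generating subbundle, so $(\calO_x,D|_{\calO_x},g|_{\calO_x})$ is a genuine sub-Riemannian manifold. Every $\varphi\in G$ maps horizontal curves to horizontal curves, hence $\varphi(\calO_x)=\calO_{\varphi(x)}$, and the subgroup $G_{\calO_x}=\{\varphi\in G:\varphi(\calO_x)=\calO_x\}$ restricts to loose isometries of $\calO_x$, which are honest isometries now that $D|_{\calO_x}$ is bracket-generating. Given $y,z\in\calO_x$, any $\varphi\in G$ with $\varphi(y)=z$ satisfies $\varphi(\calO_y)=\calO_z$, i.e.\ $\varphi\in G_{\calO_x}$ since $\calO_y=\calO_z=\calO_x$; hence $\calO_x$ is a homogeneous space. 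Finally, for any $x'\in M$, a $\varphi\in G$ with $\varphi(x)=x'$ restricts to an isometry $\calO_x\to\calO_{x'}$, so all the $\calO_x$ are mutually isometric.

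For (c), assume $(M,D,g)$ is a loose model space via $G$. By (b), each $\calO_x$ is a homogeneous space; let $p\colon\tilde\calO_x\to\calO_x$ be its universal cover with lifted structure $(\tilde D,\tilde g)$, which is connected, simply connected, with $\tilde D$ bracket-generating, so it remains only to realize linear isometries. Fix $\tilde y,\tilde z\in\tilde\calO_x$ with $y=p(\tilde y)$, $z=p(\tilde z)$ and $\tilde q\in\Ort(\tilde D_{\tilde y},\tilde D_{\tilde z})$; since $p$ is a local isometry, $q:=p_{*,\tilde z}\circ\tilde q\circ(p_{*,\tilde y})^{-1}$ lies in $\Ort(D_y,D_z)$, so the loose model-space hypothesis yields $\varphi\in G$ with $\varphi(y)=z$ and $\varphi_*|_{D_y}=q$. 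As in (b), $\varphi$ restricts to an isometry $\bar\varphi$ of $\calO_x$ with $\bar\varphi(y)=z$ and $\bar\varphi_*|_{D_y}=q$; I would lift it to the unique diffeomorphism $\tilde\varphi\colon\tilde\calO_x\to\tilde\calO_x$ with $p\circ\tilde\varphi=\bar\varphi\circ p$ and $\tilde\varphi(\tilde y)=\tilde z$, which is an isometry for $(\tilde D,\tilde g)$ and, via the identity $p_*\circ\tilde\varphi_*=\bar\varphi_*\circ p_*$ at $\tilde y$, satisfies $\tilde\varphi_*|_{\tilde D_{\tilde y}}=\tilde q$. Hence $\tilde\calO_x$ is a model space, and lifting the isometries $\calO_x\cong\calO_{x'}$ from (b) to universal covers shows that all of them are isometric.

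The main obstacle is the leaf identification $\calO_x=L_x$ in (b): it is what promotes the restricted distribution from being merely tangent to the orbit to being bracket-generating on it, and hence makes $\calO_x$ a bona fide sub-Riemannian homogeneous space to which Theorem~\ref{th:Isom} and the model-space definition apply. The remaining points — that restriction to an orbit turns loose isometries into genuine isometries, that the induced action of $G_{\calO_x}$ is transitive, and that isometries lift to the universal cover — are routine, modulo the usual care that orbits and leaves are initial (weakly embedded) immersed submanifolds so that these restrictions and lifts are smooth.
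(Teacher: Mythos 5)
Your proposal is correct and follows essentially the same route as the paper: equiregularity from the $\Ad(\varphi)$-invariance of the $\underline{D}^j$ plus transitivity, identification of $\calO_x$ with a leaf of the involutive bundle $D^r$ so that $D|\calO_x$ is bracket-generating, restriction of loose isometries to orbits for (b), and lifting to the universal cover for (c). You merely supply details (the Frobenius/Chow argument for $\calO_x=L_x$ and the covering-space lift) that the paper leaves implicit.
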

Note that if $M$ is simply connected in Lemma~\ref{lemma:Loose}~(c), this does not guarantee that the orbits are simply connected. For this reason, we have left out the requirement of simply connectedness in the definition of loose model spaces.

\begin{proof}
Let us first prove (a). Define $\underline{D}^j$ as in Section~\ref{sec:SRdef} and observe that $\Ad(\varphi)$ preserves $\underline{D}^j$ by Remark~\ref{re:Dj}. It follows that if $n_j(x) = \rank\{ Y(x) \, : \, Y \in \underline{D}^j \}$ then $n_j(x) = n_j(\varphi(x))$, so $D$ is equiregular. In particular, if $r$ is the step of $D$, then $D^r$ is integrable with foliations given by $\{ \calO_x \, : \, x \in M\}$. Hence, $D|\calO_x$ is bracket generating when considered as a subbundle of $T\calO_x$.

To prove (b) and (c), observe that for any horizontal curve $\gamma:[0,1] \to M$ with $\gamma(0) = x$, we have that $\varphi(\gamma)$ is a horizontal curve starting at $\varphi(x)$. It follows that $\varphi(\calO_x) = \calO_{\varphi(x)}$, and, in particular, $\varphi(\calO_x) = \calO_x$ whenever $\varphi(x) \in \calO_x$. Hence, any loose isometry taking a point in $\calO_x$ to another point in the same orbit gives us an isometry of the orbit. The result follows.
\end{proof}

\begin{remark}
The proof of Proposition~\ref{prop:Connection} does not depend on the bracket-generating condition, so loose model spaces also have canonical partial connections.
\end{remark}

\subsection{Model spaces induced by frame bundles} \label{sec:Frame}
Let $(M, D, g)$ be a model space with canonical partial connection $\nabla$. Consider the bundle of orthonormal frames $F^{\Ort}(D)$ of $D$ and write $G = \Isom(M)$ for the isometry group, acting freely and transitively on~$F^{\Ort}(D)$. 

Let $\calE$ be the subbundle of $TF^{\Ort}(D)$ corresponding to $\nabla$. Since $\pi_*|\calE_f : \calE_f \to D_x$ is an invertible linear map for any $f \in F^{\Ort}(D)_x$, we can lift $g$ to a metric $\tilde g$ defined on $\calE$. The sub-Riemannian structure $(\calE, \tilde g)$ is then invariant under the left action of $G$ and the right action of $\Ort(n)$. By combining these group actions, for any $q \in \Ort(\calE_{f_1},\calE_{ f_2})$, $f_1, f_2 \in F^{\Ort}(D)$, there is a loose isometry $\tilde \varphi$ such that $\tilde \varphi_* |\calE_{f_1} = q$. As a consequence, any connected component of $(F^{\Ort}(D), \calE, \tilde g)$ is a loose model space.
We define $\Frame(M)$ as the universal cover of one of these loose model spaces. Since there is a loose isometry between the connected components of $F^{\Ort}(D)$, $\Frame(M)$ is independent of choice of component. Also, $\Frame(\lambda M) = \lambda \Frame(M)$ for any $\lambda >0$ by definition.

\begin{proposition}
Let $(M, D, g)$ be a sub-Riemannain model space of step $r$ with canonical connection $\nabla$. Then $\Frame(M)$ is a (true) sub-Riemannian model space if and only if $\Hol^\nabla(x) = \SO(D_x)$ for some $x \in M$. In this case, $\Frame(M)$ has step $r$ or $r+1$. If $\Hol^\nabla(x) = \id_{D_x}$, then the orbits of the horizontal bundle of $\Frame(M)$ are isometric to~$M$.
\end{proposition}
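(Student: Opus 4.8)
The plan is to reduce everything to the dichotomy established in (the proof of) Proposition~\ref{prop:HolCC}. Write $G = \Isom(M)$ and $n = \rank D$, and recall that $\Frame(M)$ is the universal cover $\tilde C$ of a connected component $C$ of $(F^{\Ort}(D), \calE, \tilde g)$, where $\calE \subseteq TF^{\Ort}(D)$ is the subbundle attached to the canonical partial connection $\nabla$; it is invariant under both the right $\Ort(n)$-action and the left $G$-action. The horizontal bundle of $\Frame(M)$ is $\tilde\calE = p^{*}(\calE|_C)$, with $p : \tilde C \to C$ the covering map, so since $p$ is a local diffeomorphism, $\tilde\calE$ is bracket-generating if and only if $\calE$ is. By Proposition~\ref{prop:HolCC} the latter happens precisely when $\Hol^\nabla(x) = \SO(D_x)$; in the remaining case $\Hol^\nabla(x) = \id_{D_x}$, $\calE$ is not bracket-generating, and hence the horizontal bundle of $\Frame(M)$ is not bracket-generating, so $\Frame(M)$ cannot be a true sub-Riemannian model space. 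This disposes of the ``only if'' part and reduces the ``if'' part to showing that bracket-generation of $\calE$ makes $\Frame(M)$ a true model space and to computing the step.

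So assume $\Hol^\nabla(x) = \SO(D_x)$. The component $C$ is a loose model space by Section~\ref{sec:Frame}, and bracket-generation of $\calE|_C$ together with Chow's theorem shows that the horizontal orbit $\calO_{f_0}$ of Lemma~\ref{lemma:Loose} is all of $C$; hence Lemma~\ref{lemma:Loose}(c) identifies $\Frame(M) = \tilde C$ as a true sub-Riemannian model space. To bound the step I would first prove $\pi_*\calE^j = D^j$ for every $j$, where $\calE^\bullet$ and $D^\bullet$ are the respective bracket-flags: locally the $\nabla$-horizontal lifts of sections of $D$ span $\calE$ and are $\pi$-related to those sections, so their iterated Lie brackets are $\pi$-related to the iterated brackets of sections of $D$, and an induction on $j$ gives the claim. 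Consequently $\pi_*\calE^r = D^r = TM$ while $\pi_*\calE^{r-1} = D^{r-1} \subsetneq TM$, so $\calE^r + \ker\pi_* = TF^{\Ort}(D)$ and $\calE^{r-1} \subsetneq TF^{\Ort}(D)$. Since $\calE$, hence $\calE^r$, is $\Ort(n)$-invariant, $\calE^r \cap \ker\pi_*$ is an $\Ort(n)$-invariant subbundle of the adjoint bundle $\ker\pi_* \cong F^{\Ort}(D) \times_{\Ort(n)} \ort(n)$; as the adjoint representation of $\Ort(n)$ on $\ort(n)$ is irreducible (Lemma~\ref{lemma:Representation}), this subbundle is either $0$ or all of $\ker\pi_*$. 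In the second case $\calE^r = TF^{\Ort}(D)$ and the step is $r$; in the first case $\calE^r \subsetneq TF^{\Ort}(D)$, and running the same argument one step further --- using $D^{r+1} = D^r$ to get $\pi_*\calE^{r+1} = TM$, and bracket-generation of $\calE$ to rule out $\calE^{r+1} = \calE^r$ --- forces $\calE^{r+1} = TF^{\Ort}(D)$, so the step is $r+1$.

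It remains to treat the case $\Hol^\nabla(x) = \id_{D_x}$. Triviality of $\Hol^\nabla(x_0)$ and simple connectedness of $M$ make $\nabla$-parallel transport along horizontal curves path-independent, so transporting a reference frame $f_0 \in F^{\Ort}(D)_{x_0}$ produces a global $\nabla$-parallel orthonormal frame field $s : M \to F^{\Ort}(D)$ with $s(x_0) = f_0$. By construction $s$ maps horizontal curves of $M$ to $\calE$-horizontal curves, and uniqueness of horizontal lifts shows that $s(M)$ is exactly the horizontal orbit $\calO_{f_0} \subseteq C$ and that $\pi|_{\calO_{f_0}}$ is the inverse of $s$; moreover $\pi|_{\calO_{f_0}}$ is a sub-Riemannian isometry by the very definition of $\tilde g$. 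Thus $\calO_{f_0}$ is a closed embedded simply connected submanifold of $C$ sub-Riemannian isometric to $(M, D, g)$, so its preimage under $p$ is a disjoint union of diffeomorphic copies of $\calO_{f_0}$ carrying the pulled-back structure; these copies are precisely the orbits of $\tilde\calE$ over $\calO_{f_0}$. Since all horizontal orbits of $C$ are mutually isometric (Lemma~\ref{lemma:Loose}), every orbit of the horizontal bundle of $\Frame(M)$ is sub-Riemannian isometric to $M$.

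I expect the step computation to be the main obstacle: it requires keeping straight the passage between the bracket-flag of the subbundle $\calE$ on the total space $F^{\Ort}(D)$ and its image under $\pi$, and exploiting the \emph{full} $\Ort(n)$-invariance of $\calE$ (not merely the $\SO(n)$-invariance available on a single component, which would fail to give irreducibility of $\ort(4)$) together with bracket-generation to pin the step to $r$ or $r+1$. The remaining ingredients are direct applications of Proposition~\ref{prop:HolCC}, Lemma~\ref{lemma:Loose}, and the construction of the parallel frame field.
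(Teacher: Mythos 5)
Your proof is correct and follows essentially the same route as the paper: the dichotomy of Proposition~\ref{prop:HolCC} (bracket-generating versus integrable $\calE$) settles the equivalence, and the step is pinned to $r$ or $r+1$ by noting that the intersection of the $r$-th bracket flag of $\calE$ with the vertical bundle is invariant under the full $\Ort(n)$-action (together with the transitive $\Isom(M)$-action) and hence zero or everything by irreducibility of the adjoint representation --- exactly the paper's argument phrased at the Lie-algebra level as $\mathfrak{p}^j \cap \mathfrak{k}$ under the identification $F^{\Ort}(D) \cong \Isom(M)$. You supply more detail than the paper for the ``otherwise'' clause, constructing the global parallel frame field that exhibits each horizontal orbit as an isometric copy of $M$, a point the paper leaves implicit.
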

Notice that if $\Frame(M)$ is a model space with canonical partial connection $\tilde \nabla$, then $\tilde \nabla$ has trivial holonomy. Hence, we can consider $M \mapsto \Frame(M)$ as a map sending model spaces of full holonomy to model spaces of trivial holonomy.

\begin{proof}
This result follows from the proof of Proposition~\ref{prop:HolCC}, stating that if $\Hol^\nabla(x) = \SO(D_x)$ then $\calE$ is bracket-generating and the only other option is that $\calE$ is integrable. Furthermore, write $G =\Isom(M)$ with Lie algebra $\mathfrak{g}$. Choose a frame $f_0 \in F^{\Ort}(D)_{x_0}$ to identify $G$ with $F^{\Ort}(D)$. Let $\pi: G \to M$ be the projection $\pi(\varphi) = \varphi(x_0)$. Write $K = G_{x_0}$ with Lie algebra $\mathfrak{k}$ and let $\mathfrak{p}^1 \subseteq \mathfrak{g}$ be the $K$-invariant subspace corresponding to $\nabla$. If we define $\mathfrak{p}
^{j+1} = \mathfrak{p}^j + [\mathfrak{p}^1, \mathfrak{p}^j]$ with $j \geq 1$, then $\mathfrak{p}^j \cap \mathfrak{k}$ must also be $K$ invariant as well. Hence, this intersection equals $0$ or $\mathfrak{k}$. In particular, if $\mathfrak{p}^1$ generates $\mathfrak{g}$ then $\mathfrak{g}$ equals $\mathfrak{p}^r$ or $\mathfrak{p}^{r+1}$ since $\pi_* \mathfrak{p}^r = T_{x_0} M$.
\end{proof}

\subsection{Model spaces with $\Nil(M) = \sfF_{n,r}$} Recall the definition of the free nilpotent Lie algebra $\sff_{n,r}$ and the corresponding sub-Riemannian model space $\sfF_{n,r}$ of Section~\ref{sec:Carnot1}.
We begin with the following important observation. Let $\mathfrak{p}^1$ be any $n$-dimensional subspace of a Lie algebra $\mathfrak{g}$. Define $\mathfrak{p}^{k+1} = \mathfrak{p}^k + [\mathfrak{p}^1, \mathfrak{p}^{k}]$, $k \geq 1$. By definition of the free algebra, we must have
$$\rank \mathfrak{p}^k \leq \rank \sff_{n,k},$$
for any $k \geq 1$. This relation has the following consequence. Let $M$ be a manifold and let $E$ be a subbundle of $TM$ of rank $n$. Let $\underline{n}(x) = (n_1(x), n_2(x), \dots )$ be the growth vector of $E$. Then $n_k(x) \leq \rank \sff_{n,k}$.

\begin{theorem} \label{th:Free}
Let $(M, D, g)$ be a sub-Riemannian model space with tangent cone $\Nil(x)$ isometric to $\sfF_{n,r}$ for every $x \in M$. Let $\nabla$ be the canonical partial connection on $D$. Write $G = \Isom(M)$ with Lie algebra $\mathfrak{g}$.
\begin{enumerate}[\rm (a)]
\item There is a unique affine connection $\bnabla$ on $D$, invariant under the action of $G$, satisfying $\bnabla_{|D} = \nabla$ and
\begin{equation} \label{RDr0} R^{\bnabla}(v,w) = 0, \qquad \text{for any $(v,w) \in D^i \oplus D^j$, $i +j \leq r$.}\end{equation}
\item If $\Hol^\nabla(x) = \SO(D_x)$ for some $x \in M$, then $\Frame(M)$ is a model space of step $r+1$.
\item Assume that $r$ is even. Then $\Hol^\nabla(x) = \id_{D_x}$ for every $x \in M$. As a consequence, for every $x \in M$, there is a Lie group structure on $M$ such that $(D,g)$ is left invariant, $x$ is the identity and every isometry fixing $x$ is a Lie group automorphism.
\end{enumerate}
\end{theorem}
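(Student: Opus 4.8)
The plan is to work on the orthonormal frame bundle $F^{\Ort}(D)$, identified with $G=\Isom(M)$ via a frame $f_0\in F^{\Ort}(D)_{x_0}$ as in Section~\ref{sec:Canonical}, writing $K=G_{x_0}\cong\Ort(n)$, $n=\rank D$, with Lie algebras $\mathfrak{k}\subseteq\mathfrak{g}$, and letting $\mathfrak{p}^1\subseteq\mathfrak{g}$ be the $K$-invariant subspace attached to $\nabla$ (so $\mathfrak{p}^1\cap\mathfrak{k}=0$, $\pi_*\mathfrak{p}^1=D_{x_0}$). Put $\mathfrak{p}^{k+1}=\mathfrak{p}^k+[\mathfrak{p}^1,\mathfrak{p}^k]$; the grading of the free Lie algebra gives $[\mathfrak{p}^i,\mathfrak{p}^j]\subseteq\mathfrak{p}^{i+j}$ and $\rank\mathfrak{p}^j\le\rank\sff_{n,j}$. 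First I would establish the matching lower bound: the horizontal flag $\underline{\calE}^\bullet$ of the distribution $\calE\subseteq TF^{\Ort}(D)$ attached to $\nabla$ projects under $\pi$ onto $\underline{D}^\bullet$, i.e. $\pi_*\mathfrak{p}^j=D^j_{x_0}$ (one lifts iterated brackets of sections of $D$ to brackets of their $\calE$-horizontal lifts for ``$\supseteq$'', and expands sections of $\calE$ in a horizontal frame, discarding the vertical curvature terms, for ``$\subseteq$''). Since $\Nil(M)=\sfF_{n,r}$ forces $\dim D^j_{x_0}=\rank\sff_{n,j}$, combining with the upper bound yields $\rank\mathfrak{p}^j=\rank\sff_{n,j}$ for $j\le r$, hence $\mathfrak{p}^j\cap\mathfrak{k}=0$ and, at $j=r$, $\mathfrak{g}=\mathfrak{p}^r\oplus\mathfrak{k}$.

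For (a) I would use the correspondence recalled in Section~\ref{sec:Canonical}: $G$-invariant affine connections $\bnabla$ on $D$ with $\bnabla g=0$ and $\bnabla_{|D}=\nabla$ correspond to $K$-invariant complements $\mathfrak{m}\supseteq\mathfrak{p}^1$ of $\mathfrak{k}$ in $\mathfrak{g}$, the curvature at $x_0$ being $(v,w)\mapsto -[A_v,A_w]_{\mathfrak{k}}$ acting on $D_{x_0}$, where $A_v,A_w\in\mathfrak{m}$ lie over $v,w$ and the subscript denotes the $\mathfrak{k}$-component in $\mathfrak{g}=\mathfrak{m}\oplus\mathfrak{k}$. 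Taking $\mathfrak{m}=\mathfrak{p}^r$ works: by the dimension count $\mathfrak{m}\cap\pi_*^{-1}(D^i_{x_0})=\mathfrak{p}^i$ for $i\le r$, so for $(v,w)\in D^i\oplus D^j$ with $i+j\le r$ one gets $[A_v,A_w]\in[\mathfrak{p}^i,\mathfrak{p}^j]\subseteq\mathfrak{p}^{i+j}\subseteq\mathfrak{p}^r=\mathfrak{m}$, killing the curvature, and $G$-invariance carries this to every point. For uniqueness, any competing $\mathfrak{m}'$ must contain $[\mathfrak{p}^1,\mathfrak{p}^1]$, hence $\mathfrak{p}^2$, and then $\pi_*\mathfrak{p}^2=D^2_{x_0}$ forces $\mathfrak{m}'\cap\pi_*^{-1}(D^2_{x_0})=\mathfrak{p}^2$; iterating up to degree $r$ gives $\mathfrak{m}'=\mathfrak{p}^r$. (When $r=1$ this is the Levi-Civita connection and the curvature condition is vacuous.)

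By the proof of Proposition~\ref{prop:HolCC}, the subalgebra generated by $\mathfrak{p}^1$ is either a complement of $\mathfrak{k}$ (then $\calE$ is integrable and $\Hol^\nabla=\id$) or all of $\mathfrak{g}$ (then $\calE$ is bracket-generating and $\Hol^\nabla(x)=\SO(D_x)$). For (b), assume the second case; then $\Frame(M)$ is a genuine model space (Section~\ref{sec:Frame}) whose horizontal distribution is $\calE$, and its step is the least $s$ with $\mathfrak{p}^s=\mathfrak{g}$. Since $\rank\mathfrak{p}^r=\dim M<\dim\mathfrak{g}$ (because $n\ge2$), this step exceeds $r$; on the other hand the $\mathfrak{k}$-component of $[\mathfrak{p}^1,\mathfrak{p}^r]$ along $\mathfrak{g}=\mathfrak{p}^r\oplus\mathfrak{k}$ is a $K$-invariant subspace of $\mathfrak{k}\cong\ort(n)$, hence $0$ or $\mathfrak{k}$ by Lemma~\ref{lemma:Representation}, and the value $0$ would give $\mathfrak{p}^{r+1}=\mathfrak{p}^r$, contradicting that $\mathfrak{p}^1$ generates $\mathfrak{g}$. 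So $\mathfrak{p}^{r+1}=\mathfrak{g}$ and the step is exactly $r+1$.

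Finally, for (c), suppose $r$ is even but $\Hol^\nabla(x_0)\neq\id$, so by Proposition~\ref{prop:HolCC} we are in the bracket-generating case and, by (b), $\mathfrak{p}^{r+1}=\mathfrak{g}$. The associated graded Lie algebra $\operatorname{gr}\mathfrak{g}=\bigoplus_{j=1}^{r+1}\mathfrak{p}^j/\mathfrak{p}^{j-1}$ is then nilpotent of step $r+1$, $\Ort(n)$-equivariantly generated in degree one by $(\operatorname{gr}\mathfrak{g})_1=\mathfrak{p}^1\cong\mathbb{R}^n$ (the standard representation, by Lemma~\ref{lemma:Action}), and has top piece $(\operatorname{gr}\mathfrak{g})_{r+1}\cong\mathfrak{g}/\mathfrak{p}^r\cong\mathfrak{k}\cong\wedge^2\mathbb{R}^n$ as $\Ort(n)$-modules. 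Hence there is an $\Ort(n)$-equivariant surjection $\sff_{n,r+1}\to\operatorname{gr}\mathfrak{g}$ restricting to the identity on $\mathbb{R}^n$, so $\wedge^2\mathbb{R}^n$ is an $\Ort(n)$-equivariant quotient of the degree-$(r+1)$ component $\mathfrak{f}_{r+1}$ of the free Lie algebra. But $-I\in\Ort(n)$ acts on $\mathfrak{f}_{r+1}$ by $(-1)^{r+1}=-1$ (as $r$ is even) and on $\wedge^2\mathbb{R}^n$ by $+1$, forcing $\wedge^2\mathbb{R}^n=0$, impossible for $n\ge2$. Therefore $\Hol^\nabla(x_0)=\id_{D_{x_0}}$, hence $\Hol^\nabla(x)=\id_{D_x}$ for all $x$ by Proposition~\ref{prop:HolCC}, and Proposition~\ref{prop:Lie} applied at each point supplies the Lie group structures. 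I expect the projection identity $\pi_*\mathfrak{p}^j=D^j_{x_0}$ (careful handling of projectable sections on the frame bundle) to be the main routine-but-delicate step, and the parity obstruction in (c) to be the genuinely new ingredient.
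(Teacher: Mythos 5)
Your proposal is correct, and for parts (a) and (b) it follows the paper's proof essentially verbatim: the filtration $\mathfrak{p}^{j+1}=\mathfrak{p}^j+[\mathfrak{p}^1,\mathfrak{p}^j]$, the rank comparison with $\sff_{n,j}$ forced by $\Nil(M)=\sfF_{n,r}$, and the resulting splitting $\mathfrak{g}=\mathfrak{p}^r\oplus\mathfrak{k}$ are exactly the paper's steps (you spell out the uniqueness in (a) and the exactness of the step $r+1$ in (b) a bit more explicitly than the paper does, which is welcome). For (c) your packaging differs from the paper's but the engine is the same: the paper takes $\sigma=\varphi_{-\id}$, splits $\mathfrak{p}=\mathfrak{p}^-\oplus\mathfrak{p}^+$ into $\Ad(\sigma)$-eigenspaces, observes that the degree-$j$ pieces $\mathfrak{a}^j$ alternate between $\mathfrak{p}^-$ and $\mathfrak{p}^+$ while $\mathfrak{k}\subseteq\mathfrak{p}^+$, and concludes directly that $[\mathfrak{a}^1,\mathfrak{a}^r]\subseteq\mathfrak{p}^-\subseteq\mathfrak{p}$ for $r$ even, so $R^{\bnabla}=0$; you instead run a contradiction through the associated graded algebra $\operatorname{gr}\mathfrak{g}$ and an $\Ort(n)$-equivariant surjection $\sff_{n,r+1}\to\operatorname{gr}\mathfrak{g}$, detecting the same parity obstruction via the action of $-I$ on $\mathfrak{f}_{r+1}$ versus on $\wedge^2\mathbb{R}^n$. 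The two arguments are logically equivalent (your $-I$ is the paper's $\sigma$, and your degree parity is the paper's $\pm$-eigenspace bookkeeping); the paper's version has the small added benefit of exhibiting $R^{\bnabla}=0$ directly, which is what feeds into Proposition~\ref{prop:Lie}, whereas yours reaches triviality of the holonomy by elimination. The only point to tighten is your justification of $[\mathfrak{p}^i,\mathfrak{p}^j]\subseteq\mathfrak{p}^{i+j}$: this is not a consequence of ``the grading of the free Lie algebra'' but a standard Jacobi-identity induction on the filtration generated by $\mathfrak{p}^1$ (the paper asserts it equally tersely, so this is cosmetic).
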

\begin{proof}
Let $x_0$ be an arbitrary point in $M$ and $f_0 \in F^{\Ort}(M)_{x_0}$ a choice of orthonormal frame. Use this frame to identify $F^{\Ort}(M)$ with $G$. Write $\pi: G \to M$ for the projection $\varphi \mapsto \varphi(x_0)$. The partial connection $\nabla$ corresponds to a left invariant subbundle on $G$ obtained by left translation of a subspace $\mathfrak{p}^1$ such that $\pi_* \mathfrak{p}^1 = D_{x_0}$. Write $K = G_{x_0}$ with Lie algebra $\mathfrak{k}$.

\begin{enumerate}[\rm (a)]
\item Define $\mathfrak{p}^2, \dots, \mathfrak{p}^r$ by
$$\mathfrak{p}^{j+1} = \mathfrak{p}^j + [\mathfrak{p}^1, \mathfrak{p}^j].$$
By definition, we must have $\pi_* \mathfrak{p}^j = D_{x_0}^j$, so $\rank D_{x_0}^j \leq \rank \mathfrak{p}^j$. However, since the growth vector of $D$ equals that of the free nilpotent group, it follows that $\mathfrak{p}^j$ and $ D_{x_0}^j$ are of equal rank for $1 \leq j \leq r$. In particular, $\mathfrak{p} = \mathfrak{p}^r$ satisfies $\mathfrak{g} = \mathfrak{p} \oplus \mathfrak{k}$, so $\mathfrak{p}$ corresponds to an invariant affine connection. Furthermore, $[\mathfrak{p}^i, \mathfrak{p}^j ] \subseteq \mathfrak{p}$ whenever $i + j \leq r$ by definition. The result follows.
\item The statement follows simply from $\mathfrak{g} = \mathfrak{p}^r \oplus \mathfrak{k}$.
\item Define $\sigma \in G= \Isom(M)$ as the unique element satisfying $\sigma(x_0) = x_0$ and $\sigma_* |D_{x_0} = - \id_{D_{x_0}}$. Write $\mathfrak{p}^1$ and $\mathfrak{p}$ for the respective subspaces of $\mathfrak{g}$ corresponding to the partial connection $\nabla$ and the connection $\bnabla$ in~(a). Since $\sigma \in K$ and $\mathfrak{p}$ is $K$-invariant, we have $\Ad(\sigma) \mathfrak{p} \subseteq \mathfrak{p}$. Write $\mathfrak{p} = \mathfrak{p}^- \oplus \mathfrak{p}^+$ for the eigenspace decomposition of $\mathfrak{p}$. We then have
$$[\mathfrak{p}^+, \mathfrak{p}^+] \subseteq \mathfrak{p}^+ \oplus \mathfrak{k}, \qquad [\mathfrak{p}^-, \mathfrak{p}^-] \subseteq \mathfrak{p}^+ \oplus \mathfrak{k}.$$ 
$$[\mathfrak{p}^-, \mathfrak{p}^+] \subseteq \mathfrak{p}^-, \qquad [\mathfrak{p}^-, \mathfrak{k}] \subseteq \mathfrak{p}^-, \qquad [\mathfrak{p}^+, \mathfrak{k}] \subseteq \mathfrak{p}^+ .$$ 
Define $\mathfrak{a} = \mathfrak{p}^1 \subseteq \mathfrak{p}^-$ and for any $j=1,2 \dots, r-1$,
$$\mathfrak{a}^{j+1} = \left\{ [A,B] \, :\, A \in \mathfrak{a}^1, B \in \mathfrak{a}^j \right\}$$
Observe that $\mathfrak{a}^j$ is in $\mathfrak{p}^-$ (resp. $\mathfrak{p}^+$) whenever $j$ is odd (even). Furthermore, since the rank of $\mathfrak{p}^j$ grows as fast as the free group for $1 \leq j \leq r$, we have $\mathfrak{p}^{j+1} = \mathfrak{p}^j \oplus \mathfrak{a}^j$. Finally, it follows that $[\mathfrak{p}, \mathfrak{p}] \subseteq \mathfrak{p}$ if and only if $[\mathfrak{a}^1, \mathfrak{a}^r] \subseteq \mathfrak{p}$. However, $[\mathfrak{a}^1, \mathfrak{a}^r]$ is always contained in $\mathfrak{p}$ when $r$ is even, since this bracket must be contained in $\mathfrak{p}^-$. As a consequence, $R^{\bnabla} =0$ when $r$ is even, and the result follows from Proposition~\ref{prop:Lie}. \end{enumerate}
\end{proof}

\subsection{All model spaces in step~2}
We will now describe all model spaces $(M,D,g)$ for the case of step~$2$. By Example~\ref{ex:Step2} the tangent cone is isometric to $\sfF_{n,2}$ for each point $x \in M$.

\begin{theorem} \label{th:Step2}
Let $(M, D, g)$ be sub-Riemannian model space of step~$2$. Then $(M, D,g)$ is isometric to $\sfF_{n,2}$ or $\Frame(\sfSigma_n(\rho))$ for some $\rho \neq 0$.
\end{theorem}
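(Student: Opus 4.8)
The plan is to classify all sub-Riemannian model spaces of step~$2$ by reducing the problem to a classification of the relevant Lie-algebraic data, exactly as set up in Remark~\ref{re:Compare}. Fix a model space $(M, D, g)$ of step~$2$ with $\rank D = n$, let $G = \Isom(M)$ with Lie algebra $\mathfrak{g}$, choose a base point $x_0$, set $K = G_{x_0}$ with Lie algebra $\mathfrak{k}$, and let $\mathfrak{p}^1 \subseteq \mathfrak{g}$ be the $K$-invariant subspace corresponding to the canonical partial connection $\nabla$ via the frame-bundle identification. Since the growth vector equals that of $\sfF_{n,2}$ by Example~\ref{ex:Step2}, the same argument as in the proof of Theorem~\ref{th:Free}(a) gives $\mathfrak{p} := \mathfrak{p}^1 + [\mathfrak{p}^1,\mathfrak{p}^1]$ with $\mathfrak{g} = \mathfrak{p} \oplus \mathfrak{k}$ and $\pi_*\mathfrak{p}^1 = D_{x_0}$, $\pi_*\mathfrak{p} = T_{x_0}M$. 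Because we are on a model space, the $K$-action on $\mathfrak{k}$ is the adjoint representation of $\Ort(n)$ on $\ort(n)$, and the $K$-action on $\mathfrak{p}^1$ is the standard representation of $\Ort(n)$ on $\mathbb{R}^n$.

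**Next I would determine the bracket structure.** Writing $\mathfrak{z} = [\mathfrak{p}^1,\mathfrak{p}^1] \subseteq \mathfrak{p}$, the map $\mathfrak{p}^1 \wedge \mathfrak{p}^1 \to \mathfrak{z}$, $v\wedge w \mapsto [v,w]$, is $K$-equivariant and surjective; since $\wedge^2\mathbb{R}^n \cong \ort(n)$ is irreducible as an $\Ort(n)$-representation (Lemma~\ref{lemma:Representation}), and $\mathfrak{z}$ is nonzero (step is exactly~$2$), this map must be an isomorphism, so $\mathfrak{z} \cong \ort(n)$ as a $K$-representation and $[v,w] = v\wedge w$ up to a fixed positive scalar which we normalize by scaling the metric. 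Thus $\dim \mathfrak{p} = n + \binom{n}{2}$, and $\mathfrak{g} = \mathfrak{p}^1 \oplus \mathfrak{z} \oplus \mathfrak{k}$ with $\mathfrak{p}^1\cong\mathfrak{z}\cong\mathfrak{k}$ all carrying the same ($\Ort(n)$-standard or adjoint) $K$-action, and with $[\mathfrak{p}^1,\mathfrak{p}^1]=\mathfrak{z}$, $[\mathfrak{k},\mathfrak{p}^1]\subseteq\mathfrak{p}^1$, $[\mathfrak{k},\mathfrak{z}]\subseteq\mathfrak{z}$ by $K$-invariance, and $[\mathfrak{k},\mathfrak{k}]\subseteq\mathfrak{k}$. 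The remaining undetermined brackets are $[\mathfrak{z},\mathfrak{p}^1]$ and $[\mathfrak{z},\mathfrak{z}]$. Since $D$ has step~$2$, $[\mathfrak{p}^1,\mathfrak{z}] = [\mathfrak{p}^1,[\mathfrak{p}^1,\mathfrak{p}^1]] \subseteq \mathfrak{p} = \mathfrak{p}^1\oplus\mathfrak{z}$; projecting onto $\mathfrak{k}$ is zero, projecting to $\mathfrak{p}^1$ and $\mathfrak{z}$ gives $K$-equivariant maps $\ort(n)\otimes\mathbb{R}^n \to \mathbb{R}^n$ and $\ort(n)\otimes\mathbb{R}^n \to \ort(n)$. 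By the representation theory of $\Ort(n)$ recalled in Appendix~\ref{sec:On}, the first has (for $n\neq 3$, and also handling $n=3$ separately with the cross product) a one-dimensional space of such maps spanned by $(A,x)\mapsto Ax$, while the second must vanish by invariant-counting; so $[A, x] = \kappa\, Ax$ for a scalar $\kappa\in\mathbb{R}$. One checks from $K$-invariance and Jacobi on $\mathfrak{p}^1\times\mathfrak{p}^1\times\mathfrak{z}$ that $[\mathfrak{z},\mathfrak{z}]$ is then forced; the only free parameter is $\kappa$.

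**The main step — and the main obstacle — is to identify these algebras.** When $\kappa = 0$ the algebra $\mathfrak{p}$ is a two-step nilpotent ideal with $\mathfrak{g} = \mathfrak{p}\rtimes\mathfrak{k}$, and one recognizes $\mathfrak{p}\cong\sff_{n,2}$ (the free nilpotent algebra of Example~\ref{ex:Step2}) with $\mathfrak{k}=\ort(n)$ acting by derivations; by Remark~\ref{re:Compare} and the uniqueness in Theorem~\ref{th:Isom}, $M$ is then isometric to $\sfF_{n,2}$. When $\kappa\neq 0$, after rescaling the metric we may normalize, and I claim $\mathfrak{g}$ is isomorphic to the isometry algebra of $\Frame(\sfSigma_n(\rho))$ for a suitable $\rho\neq 0$: indeed $\Frame(\sfSigma_n(\rho))$ is built from the frame bundle $F^{\Ort}(T\sfSigma_n(\rho))$, whose structure group $\Ort(n)$ and base isometry algebra $\mathsf{g}_n(\rho) = \mathbb{R}^n\oplus\ort(n)$ assemble into precisely an algebra of the above shape; the sign of $\rho$ matches the sign of $\kappa$ and $\rho=0$ is excluded since $\kappa\neq 0$ forces $[\mathfrak{z},\mathfrak{p}^1]\neq 0$. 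Matching the two algebra presentations explicitly, verifying that the induced sub-Riemannian structures (horizontal space $\mathfrak{p}^1$ with its inner product) coincide under the isomorphism, and invoking Remark~\ref{re:Compare} in the reverse direction to conclude a genuine isometry of simply connected model spaces, is the delicate bookkeeping I expect to occupy most of the proof. The case $n=3$ needs a separate remark: by Proposition~\ref{prop:Connection}(b) there is a one-parameter family of connections differing by the cross product, but this does not enlarge the classification since it only reflects the choice between $\Isom_0$ and $\Isom$; one still gets only $\sfF_{3,2}$ and $\Frame(\sfSigma_3(\rho))$.
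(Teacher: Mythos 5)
Your proposal is essentially correct and would work, but it takes a genuinely different route from the paper. The paper first invokes Theorem~\ref{th:Free}(c) (step $2$ is even, and by Example~\ref{ex:Step2} the tangent cone is $\sfF_{n,2}$) to put a Lie group structure on $M$ itself with $(D,g)$ left invariant; it then only has to classify the $\bigl(n+\tbinom{n}{2}\bigr)$-dimensional algebra $\mathfrak{m}=\mathfrak{m}^-\oplus\mathfrak{m}^+$ using the involution $\sigma$, Lemma~\ref{lemma:tensors} and the Jacobi identity, landing directly on $\mathsf{g}_n(\rho)$. You instead classify the full isometry algebra $\mathfrak{g}=\mathfrak{p}^1\oplus\mathfrak{z}\oplus\mathfrak{k}$ of dimension $n+2\tbinom{n}{2}$, which is exactly the method the paper reserves for Section~\ref{sec:Step3} (the $\sfC_{n,3}$ case). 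Both work; the paper's detour through Theorem~\ref{th:Free}(c) buys a smaller algebra and fewer brackets to pin down, while yours is more self-contained and generalizes to odd step.

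One step in your write-up is wrongly justified, and it is precisely the crux of the theorem. You assert that $[\mathfrak{p}^1,\mathfrak{z}]\subseteq\mathfrak{p}^1\oplus\mathfrak{z}$ ``since $D$ has step~$2$.'' The step being $2$ only says $\pi_*\mathfrak{p}^2=T_{x_0}M$; it does \emph{not} force $\mathfrak{p}^2=\mathfrak{p}^1\oplus\mathfrak{z}$ to be a subalgebra --- compare Section~\ref{sec:Step3}, where for $a_2=0$ the subalgebra generated by $\mathfrak{p}^1$ is all of $\mathfrak{g}$ and the holonomy is full, even though the step is still $3$. The correct reason the $\mathfrak{k}$-component of $[\mathfrak{p}^1,\mathfrak{z}]$ vanishes is the parity argument with the central element $\sigma$ (equivalently $-I\in K$): $\Ad(\sigma)$ acts by $-1$ on $\mathfrak{p}^1$ and by $+1$ on $\mathfrak{z}$ and $\mathfrak{k}$, so $[\mathfrak{p}^1,\mathfrak{z}]$ lies in the $-1$ eigenspace, which a dimension count identifies with $\mathfrak{p}^1$. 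This is exactly the content and proof of Theorem~\ref{th:Free}(c) for even step. Your ``invariant-counting'' for the $\mathfrak{z}$-component is this same parity argument and does apply equally to the $\mathfrak{k}$-component (the two are isomorphic $K$-representations), so the gap is a misattribution rather than a missing idea --- but as written the sentence ``since $D$ has step~$2$'' is not a proof, and you should replace it by the $\sigma$-eigenspace argument or by a citation of Theorem~\ref{th:Free}(c). The remaining bookkeeping you defer (Jacobi forcing $[\mathfrak{z},\mathfrak{z}]$, matching $\kappa=0$ with $\sfF_{n,2}$ and $\kappa\neq0$ with $\mathsf{g}_n(\kappa)$, and running Remark~\ref{re:Compare} backwards) goes through as you indicate.
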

In other words, model spaces of step~$2$ are $\sfF_{n,2}$ and the Lie groups $\sfG_n(\rho)$, $\rho \neq 0$ defined as in Section~\ref{sec:Riemannian}.

\begin{proof}
Write $G = \Isom(M)$. Using Theorem~\ref{th:Free}, we know that there is a Lie group structure on $M$ such that $(D,g)$ is left invariant. Write $1$ for the identity of $M$ and let $\mathfrak{m}$ be its Lie algebra. Let $\mathfrak{m}^1 \subseteq \mathfrak{m}$ be the inner product space such that $D_x = x \cdot \mathfrak{m}^1$. Recall from Theorem~\ref{th:Free} that every $\varphi \in G_1$ is a group automorphism.
Let $\sigma$ be the automorphism determined by $\sigma(1) =1$ and $\sigma_* |\mathfrak{m}^1 = -\id_{\mathfrak{m}^1}$. Let $\mathfrak{m} = \mathfrak{m}^- \oplus \mathfrak{m}^+$ be the corresponding decomposition into eigenspaces. It follows that $\mathfrak{m}^1 = \mathfrak{m}^-$, $\mathfrak{m}^+ = [\mathfrak{m}^1, \mathfrak{m}^1]$ and we have relations
$$[\mathfrak{m}^{+}, \mathfrak{m}^{\pm} ] = \mathfrak{m}^{\pm} , \qquad [\mathfrak{m}^-, \mathfrak{m}^{\pm} ] = \mathfrak{m}^{\mp}.$$

Furthermore, since $\Nil(1)$ is isometric to $\sfF_{n,2}$, we can identity the vector spaces $\mathfrak{m}^-$ and $\mathfrak{m}^+$ with respectively $\mathbb{R}^n$ with the usual euclidean structure and $\ort(n)$. If we write elements as $(x, A) \in \mathfrak{m}^- \oplus \mathfrak{m}^+$, $A \in \ort(n)$, $x \in \mathbb{R}^n$, we must have
$$[(x, 0), (y, 0) ] = (0,  x \wedge y),$$
by our characterization of the nilpotentization.
Note that for every $q \in \Ort(\mathfrak{m}^-)$, which we can identify with an element in $\Ort(n)$, there is a corresponding group isomorphism $\varphi_q$ having $1$ as fixed point. This map acts on $\mathfrak{m}$ by
$${\varphi_q}_* (x, A) = (qx, qAq^{-1}), \qquad x \in \mathbb{R}^n, A \in \ort(n),$$
and this map has to be a Lie algebra automorphism. By this fact, it follows from Lemma~\ref{lemma:tensors}, there are constants $\rho_1$ and $\rho_2$ such that
$$[(x, A), (y,B) ] = ( \rho_1 (A y - Bx), x \wedge y + \rho_2 [A, B] ).$$
Using the Jacobi identity, we must have $\rho_1 = \rho_2 = \rho$. The result follows by Section~\ref{sec:Riemannian}.
\end{proof}

\section{Model spaces with $\sfC_{n,r}$ as their tangent cone} \label{sec:Rol}
\subsection{All model spaces with $\sfC_{n,3}$ as tangent cone} \label{sec:Step3} In this section, we will study model spaces whose nilpotentization equal $\sfC_{n,r}$ of Example~\ref{ex:FirstC} and~\ref{ex:Rol}.
To begin with, we consider a sub-Riemannian model space $(M, D, g)$ with $\Nil(M)$ isometric to $\sfC_{n,3}$.
Write $G = \Isom(M)$ for the isometry group and let $K = G_{x_0}$ be the stabilizer group for some chosen $x_0 \in M$. Let $\mathfrak{g}$ and $\mathfrak{k}$ be the respective Lie algebras of $G$ and $K$. Write $\pi: G \to M$ for the map $\pi(\varphi) = \varphi(x_0)$.

Define $\sigma$ as unique isometry satisfying $\sigma(x_0) = x_0$ and $\sigma_*|_{D_{x_0}} = - \id_{D_{x_0}}$. Let $\mathfrak{g} =\mathfrak{g}^- \oplus \mathfrak{g}^+$ be the eigenvalue decomposition and notice that $\mathfrak{k} \subseteq \mathfrak{g}^+$. Let $\mathfrak{a}^1$ be the subspace of $\mathfrak{g}^- \subseteq \mathfrak{g}$ corresponding to the canonical partial connection $\nabla$. Let $\mathfrak{a}^3$ be a $K$-invariant complement of $\mathfrak{a}^1$ in $\mathfrak{g}^-$. Define
$$\mathfrak{a}^2 = [\mathfrak{a}^1, \mathfrak{a}^1] \subseteq \mathfrak{g}^+.$$
This has to be transverse to $\mathfrak{k}$, since $\pi_* (\mathfrak{a}^1 \oplus \mathfrak{a}^2) = D^2_{x_0}$ with both spaces having rank $\frac{n(n+1)}{2}$. As a result we have
\begin{equation} \label{Decomposition} \mathfrak{g} = \mathfrak{a}^1 \oplus \mathfrak{a}^2 \oplus \mathfrak{a}^3 \oplus \mathfrak{k}.\end{equation}

As vector spaces, we will identify $\mathfrak{a}^1$ and $\mathfrak{a}^3$ with~$\mathbb{R}^n$ and identify~$\mathfrak{a}^2$ and~$\mathfrak{k}$ with~$\ort(n)$. According to the decomposition \eqref{Decomposition}, we write an element in~$\mathfrak{g}$ as $(x, A, u, C)$ with $x,u \in \mathbb{R}^n$ and $A,C \in \ort(n)$. Since $\Nil(x_0)$ is isometric to $\sfC_{n,3}$, we have
$$[(x,0,0,0), (y, 0,0,0)] = (0, x \wedge y,0,0).$$
and
$$\pr_{\mathfrak{a}_3} [(0,A,0,0), (x, 0,0,0)] = (0, 0,Ax,0).$$
As a consequence, if $\varphi_q \in K$ is the unique element satisfying ${\varphi_q}_* |D_{x_0} = q \in \Ort(D_{x_0})$, then
$${\varphi_q}_* (x, A, u,C) = (qx, \Ad(q)A, qu, \Ad(q)C).$$
Using Lemma~\ref{lemma:tensors}, we know that there is some constant $c$ such that
$$[(0,A,0,0), (x, 0,0,0)] = (c Ax, 0,Ax,0).$$
By replacing $\mathfrak{a}^3$ with elements of the form $(cu, 0, u,0)$, we may assume~$c = 0$.

Using Lemma~\ref{lemma:tensors}, we know that there exists constant $a_j$, $b_j$, $c_j$ and $d_j$, for $j =1,2$ such that
\begin{align*}
\left[ \! \left(\begin{array}{c}  x \\ A \\ u \\0 \end{array} \right)^\transpose \! \! , \! \left( \begin{array}{c} y \\ B \\ v \\ 0 \end{array} \right)^\transpose  \right] & \! = \! \left( \begin{array}{c} c_1 (Av - Bu) \\
 x \wedge y + a_1(x \wedge v + u \wedge y) + b_1 [A,B] + d_1 u \wedge v \\
 Ay -Bx + c_2 (Av - Bu) \\
a_2(x \wedge v+ u \wedge y) + b_2[A,B] + d_2 u \wedge v \end{array} \right)^\transpose
\end{align*}

From the Jacobi identity, we have
$$a_1 = b_1 = c_2, \qquad a_2 = b_2 = c_1,$$
and
$$d_1 = a_2 + a_1^2 ,\qquad d_2 = a_1 a_2,$$
Hence, a choice of $a_1$ and $a_2$ uniquely determines all the constants. We will write the corresponding Lie algebra as $\mathfrak{g}(n,a_1, a_2)$ and the corresponding sub-Riemannian model space $M(n,a_1, a_2)$.

Next, we need to show that the parameters $a_1, a_2$ determine $M(n,a_1, a_2)$ up to isometry. Let $\Phi: M = M(n,a_1, a_2) \to \tilde M = M(n,\tilde a_1, \tilde a_2)$ be an isometry. By Remark~\ref{re:Compare} we then know that there is a Lie algebra isomorphism
$$\Psi:  \mathfrak{g} = \mathfrak{g}(n,a_1, a_2) = \mathfrak{a}^1 \oplus \mathfrak{a}^2 \oplus \mathfrak{a}^3 \oplus \mathfrak{k} \to \tilde {\mathfrak{g}} = \mathfrak{g}(n,\tilde a_1, \tilde a_2) = \tilde {\mathfrak{a}}^1 \oplus \tilde {\mathfrak{a}}^2 \oplus \tilde {\mathfrak{a}}^3 \oplus \tilde {\mathfrak{k}}, $$
that maps $\mathfrak{a}^1$ isometrically on $\tilde {\mathfrak{a}}^1$ and satisfies $\Psi(\mathfrak{k}) = \tilde {\mathfrak{k}}$. Since $\mathfrak{a}^2 = [\mathfrak{a}^1, \mathfrak{a}^1]$ and $\mathfrak{a}^3 = [\mathfrak{a}^1, \mathfrak{a}^2]$, with similar relations in $\tilde {\mathfrak{g}}$, it follows that if we identify $\mathfrak{a}^1$ and $\tilde{\mathfrak{a}}^1$ through $\Psi|\mathfrak{a}^1$, we get an identification of $\mathfrak{a}^j$ with $\tilde {\mathfrak{a}}^j$ as well for $j=2,3$. In conclusion, we must have $a_1= \tilde a_1$ and $a_2 = \tilde a_2$.

The theorem below summarizes the above discussion.
\begin{theorem} \label{th:Rol3}
If $M$ is a sub-Riemannian model space with $\Nil(M)$ isometric to $\sfC_{n,3}$, then $M$ is isometric to the space $M(n,a_1, a_2)$ for some $(a_1, a_2) \in \mathbb{R}^2$.
\end{theorem}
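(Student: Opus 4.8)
The plan is to read off the structure of $M$ from the Lie algebra $\mathfrak{g}$ of its isometry group $G = \Isom(M)$. Fix a point $x_0 \in M$, put $K = G_{x_0}$ with Lie algebra $\mathfrak{k}$, and let $\pi\colon G \to M$, $\pi(\varphi) = \varphi(x_0)$. Because $M$ is a model space, $K$ is isomorphic to $\Ort(n)$, acting on $D_{x_0}$ as the standard representation and on $\mathfrak{k} \cong \ort(n)$ as the adjoint representation. The central device is the involutive isometry $\sigma \in K$ with $\sigma(x_0) = x_0$ and $\sigma_*|_{D_{x_0}} = -\id_{D_{x_0}}$: its $\pm 1$-eigenspace decomposition $\mathfrak{g} = \mathfrak{g}^+ \oplus \mathfrak{g}^-$ satisfies $\mathfrak{k} \subseteq \mathfrak{g}^+$, $[\mathfrak{g}^+,\mathfrak{g}^+]\subseteq\mathfrak{g}^+$, $[\mathfrak{g}^-,\mathfrak{g}^-]\subseteq\mathfrak{g}^+$ and $[\mathfrak{g}^-,\mathfrak{g}^+]\subseteq\mathfrak{g}^-$.

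First I would build the $K$-invariant decomposition $\mathfrak{g} = \mathfrak{a}^1 \oplus \mathfrak{a}^2 \oplus \mathfrak{a}^3 \oplus \mathfrak{k}$. Take $\mathfrak{a}^1 \subseteq \mathfrak{g}^-$ to be the $K$-invariant subspace determined by the canonical partial connection $\nabla$ (so $\pi_*\mathfrak{a}^1 = D_{x_0}$), set $\mathfrak{a}^2 := [\mathfrak{a}^1,\mathfrak{a}^1] \subseteq \mathfrak{g}^+$, and let $\mathfrak{a}^3$ be a $K$-invariant complement of $\mathfrak{a}^1$ in $\mathfrak{g}^-$. Since $\Nil(M)$ is isometric to $\sfC_{n,3}$, the rank of $D^2_{x_0}$ is $n + n(n-1)/2 = n(n+1)/2$; as $\pi_*(\mathfrak{a}^1\oplus\mathfrak{a}^2) = D^2_{x_0}$, a dimension count gives $\mathfrak{a}^2 \cap \mathfrak{k} = 0$ and identifies the summands as $K$-modules: $\mathfrak{a}^1 \cong \mathfrak{a}^3 \cong \mathbb{R}^n$ (standard) and $\mathfrak{a}^2 \cong \mathfrak{k} \cong \ort(n)$ (adjoint).

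Next I would determine all brackets. Identifying $\Nil(x_0)$ with $\sfC_{n,3}$ forces $[(x,0,0,0),(y,0,0,0)] = (0,x\wedge y,0,0)$ and $\pr_{\mathfrak{a}^3}[(0,A,0,0),(x,0,0,0)] = (0,0,Ax,0)$. Every remaining bracket component is an $\Ort(n)$-equivariant bilinear map between the $K$-modules above, so by the classification of such maps (Lemma~\ref{lemma:tensors}) each is a real linear combination of the few standard equivariant maps $x\wedge y$, $Ax$, $[A,B]$, and so on. One free coefficient can be normalized to zero by replacing $\mathfrak{a}^3$ with the graph $\{(cu,0,u,0)\}$ of a suitable multiple of the identity, after which imposing the Jacobi identity on triples drawn from $\mathfrak{a}^1,\mathfrak{a}^2,\mathfrak{a}^3$ produces the relations $a_1 = b_1 = c_2$, $a_2 = b_2 = c_1$, $d_1 = a_2 + a_1^2$, $d_2 = a_1 a_2$. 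Thus the bracket, and hence the Lie algebra $\mathfrak{g}(n,a_1,a_2)$ and its simply connected model space $M(n,a_1,a_2)$, is completely determined by the pair $(a_1,a_2) \in \mathbb{R}^2$, which gives the asserted isometry. The step I expect to be the main obstacle is precisely this last bookkeeping: one must track carefully which component of each double bracket lands in $\mathfrak{k}$ versus in each $\mathfrak{a}^j$ and extract exactly the stated constraints; equivariance shrinks the number of unknowns to a handful, but the Jacobi computation itself remains delicate.

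Finally, although the theorem as stated only asserts existence, I would also record that $(a_1,a_2)$ is a genuine invariant (this is what the introduction's count of two parameters requires): any isometry $M(n,a_1,a_2) \to M(n,\tilde a_1,\tilde a_2)$ induces, by Remark~\ref{re:Compare}, a Lie algebra isomorphism $\Psi$ with $\Psi(\mathfrak{k}) = \tilde{\mathfrak{k}}$ acting isometrically on $\mathfrak{a}^1$; since $\mathfrak{a}^2 = [\mathfrak{a}^1,\mathfrak{a}^1]$ and $\mathfrak{a}^3 = [\mathfrak{a}^1,\mathfrak{a}^2]$, identifying $\mathfrak{a}^1$ with $\tilde{\mathfrak{a}}^1$ through $\Psi$ forces the identification of $\mathfrak{a}^j$ with $\tilde{\mathfrak{a}}^j$ for $j = 2,3$, hence $a_i = \tilde a_i$.
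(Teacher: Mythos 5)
Your proposal follows essentially the same route as the paper: the eigenspace decomposition under $\sigma$, the $K$-invariant splitting $\mathfrak{g} = \mathfrak{a}^1 \oplus \mathfrak{a}^2 \oplus \mathfrak{a}^3 \oplus \mathfrak{k}$ with the same dimension count, the classification of equivariant bilinear maps via Lemma~\ref{lemma:tensors}, the normalization $c=0$ by re-choosing $\mathfrak{a}^3$ as a graph, the Jacobi-identity relations among the coefficients, and the uniqueness of $(a_1,a_2)$ via Remark~\ref{re:Compare}. The argument is correct and matches the paper's proof in all essentials.
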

We remark the relation $\lambda M(n, a_1, a_2) = M( n, a_1/\lambda^2, a_2/\lambda^4)$ for any $\lambda >0$. Also notice that if $\nabla$ is the canonical partial connection of $M(n,a_1,a_2)$, then $\Hol^\nabla(x)$ is trivial for an arbitrary $x \in M(n, a_1, a_2)$ if and only if $a_2 = 0$.

\subsection{Rolling sums of model spaces} \label{sec:RolSum}
For $j=1,2$, let $(M^{(j)}, D^{(j)}, g^{(j)})$ be a loose model spaces with canonical partial connection $\nabla^{(j)}$ on $D^{(j)}$. If both $D^{(1)}$ and $D^{(2)}$ have the same rank $n$, we introduce a new loose model space $M_1 \boxplus M_2$ whose horizontal bundle also have rank $n$.

Introduce a manifold
\begin{align*}
Q = \Ort(D^{(1)}, D^{(2)}) & = \left\{ q: D_x^{(1)} \to D_y^{(2)} \, \, : \, \begin{array}{c} (x,y) \in M^{(1)} \times M^{(2)} \\ \text{$q$ linear isometry} \end{array} \right\} .
\end{align*}
On this space, we define a horizontal bundle $D$ in the following way. The smooth curve $q(t): D_{\gamma^{(1)}(t)}^{(1)} \to D_{\gamma^{(2)}(t)}^{(2)}$ is tangent to $D$ if and only if
\begin{enumerate}[\rm (i)]
\item $\gamma^{(j)}$ is tangent to $D^{(j)}$ for $j=1,2$,
\item $q(t) \dot \gamma^{(1)}(t) = \dot \gamma^{(2)}(t)$,
\item for every vector field $V(t)$ with values in $D^{(1)}$ along $\gamma^{(1)}$,
$$\nabla_{\dot \gamma^{(2)}(t)}^{(2)} q(t) V(t) = q(t) \nabla_{\dot \gamma^{(1)}(t)}^{(1)} V(t).$$
\end{enumerate}
From the first order ordinary differential equations in (ii) and (iii), it follows that~$q(t)$, including its image $\gamma^{(2)}$ in $M^{(2)}$, is uniquely determined by $\gamma^{(1)}$ up to initial condition. Hence, the linear map
$${\pi^{(1)}}_*|D_q: D_q \to D^{(1)}_{\pi^{(1)}(q)},$$
is invertible since curves tangent to $D^{(1)}$ have well defined horizontal lifts tangent to $D$. We can use this fact to pull back the metric $g^{(1)}$ from $D^{(1)}$ to a metric~$g$ on~$D$. This equals the result of pulling the metric back from~$D^{(2)}$.

Equivalently, $Q$ can be realized as $\big(F^{\Ort}(D^{(1)}) \times F^{\Ort}(D^{(2)})\big)/\Ort(n)$ where the quotient is with respect to the diagonal action. Consider the subbundle $\calE^{(j)}$ of $TF^{\Ort}(D^{(j)})$ corresponding to~$\nabla^{(j)}$. Let $\pi^{(j)}:  F^{\Ort}(D^{(j)}) \to M^{(j)}$ denote the projection. Write $e_1, \dots, e_n$ for the standard basis of $\mathbb{R}^n$. The subbundle $\calE^{(j)}$ has a global basis of elements $X_1^{(j)}, \dots, X_n^{(j)}$ such that such that
\begin{equation} \label{FrameBasis} {\pi^{(j)}}_* X^{(j)}_i(f) = f(e_i), \qquad f \in F^{\Ort}(D^{(j)}).\end{equation}
If we pull back the metric $g^{(j)}$ to $\calE^{(j)}$, then $X_1, \dots, X_n$ forms an orthonromal basis. We can then define $(D,g)$ such that $\{ \nu_*  (X_i^{(1)} +  X_i^{(2)}) \, : \, 1 \leq i \leq n \}$ forms an orthonormal basis at each point with $\nu: F(D^{(1)}) \times F(D^{(2)}) \to Q$ being the quotient map.

By combining isometries of $M^{(1)}$ and $M^{(2)}$, we obtain that any connected component of $(Q, D, g)$ is a loose model space. More precisely, for any pair of isometries $\varphi^{(j)} \in \Isom(M^{(j)})$, $j=1,2$, the map
$$\Phi_{\varphi^{(1)}, \varphi^{(2)}}: q \in \Ort(D_x^{(1)}, D_y^{(2)}) \mapsto {\varphi^{(2)}}_*(y) \circ q \circ {\varphi^{(1)}}_*(x),$$
is an isometry of $(Q,D,g)$. It is simple to verify that any $\tilde q \in \Ort(D_{q_1}, D_{q_2})$ can be represented by an isometry of this type. We write $M^{(1)} \boxplus M^{(2)}$ for the loose model space given as the universal cover of one of the connected components of $(Q, D, g)$. We will call the resulting space \emph{the rolling sum} of $M^{(1)}$ and $M^{(2)}$. In the special case when $M^{(1)}$ and $M^{(2)}$ are Riemannian model spaces, $M^{(1)} \boxplus M^{(2)}$ correspond to the optimal control problem of rolling $M^{(1)}$ on $M^{(2)}$ without twisting or slipping along a curve of minimal length. For more information, see e.g. \cite{Gro16,CiKo12,JuZi08}.

Observe the following relations, where $\cong$ denotes loose isometry.
\begin{enumerate}[\rm (i)]
\item (Commutativity) $M^{(1)} \boxplus M^{(2)} \cong M^{(2)} \boxplus M^{(1)}$,
\item (Associativity) $(M^{(1)} \boxplus M^{(2)}) \boxplus M^{(3)} \cong M^{(1)} \boxplus (M^{(2)} \boxplus M^{(3)})$
\item (Distributivity) $\lambda (M^{(1)} \boxplus M^{(2)}) \cong \lambda M^{(1)} \boxplus \lambda M^{(2)}$ for any $\lambda >0$.
\end{enumerate}
Commutativity and distributivity follows from the definition. As for associativity, consider sub-Riemannian loose model spaces $(M^{(j)}, D^{(j)}, g^{(j)})$, $j=1,2,3$. Let $\calE^{(j)}$ be the subbundle of $TF^{\Ort}(D^{(j)})$ corresponding to the canonical partial connection on $M^{(j)}$ and let $X^{(j)}_1, \dots, X^{(j)}_n$ be its basis defined by \eqref{FrameBasis}. Define $(Q,D,g)$ as above. Then $F^{\Ort}(D) = F^{\Ort}(D^{(1)}) \times F^{\Ort}(D^{(2)})$. It follows that $(M^{(1)} \boxplus M^{(2)}) \boxplus M^{(3)}$ is the universal cover of a connected component of
$$\Big(F^{\Ort}(D^{(1)}) \times F^{\Ort}(D^{(2)}) \times F^{\Ort}(D^{(3)}) \Big)/\Ort(n),$$
with sub-Riemannian structure given as the projection of the structure on $F^{\Ort}(D^{(1)}) \times F^{\Ort}(D^{(2)}) \times F^{\Ort}(D^{(3)})$ with $X^{(1)}_i + X^{(2)}_i + X^{(3)}_i$, $i =1, \dots, n$, as an orthonormal basis. Associativity follows.

\subsection{Rolling sum of Riemannian model spaces} \label{sec:NilRol}
We will end this section with the relation between rolling sums of Riemannian model spaces and model spaces with nilpotentization $\sfC_{n,r}$.

\begin{theorem}
Let $\rho_1, \dots, \rho_r$ be real numbers and define $r \times r$-matrices,
$$\boldsymbol \rho = (\rho_i^{j-1}),\qquad \boldsymbol \mu = (\rho_i^j), \qquad i,j = 1, \dots, r,$$
with the convention that $\rho_i^0 =1$ for any value of $\rho_i$.
\begin{enumerate}[\rm (a)]
\item We have $\det \boldsymbol \rho \neq 0$ if and only if
$$M = \sfSigma_n(\rho_1) \boxplus \cdots \boxplus \sfSigma_n(\rho_r)$$
is a (true) sub-Riemannian model space. Furthermore, if $\det \boldsymbol \rho \neq 0$, then the tangent cone of $M$ at any point is isometric to $\sfC_{n,2r-1}$. \\
The loose model space
$$\tilde M = \Frame(\sfSigma_n(\rho_1) \boxplus \cdots \boxplus \sfSigma_n(\rho_r))$$ is a sub-Riemannian model space with tangent cone $\sfC_{n,2r}$ if and only if $\det \boldsymbol \rho \neq 0$ and $\det \boldsymbol \mu \neq 0$.
\item Let $\rho_1$ and $\rho_2$ be two real numbers such that $\rho_1 - \rho_2 \neq 0$. Define $M(n, a_1, a_2)$ as in Section~\ref{sec:Step3}. Then $\sfSigma_n( \rho_1) \boxplus \sfSigma(\rho_2)$ is isometric to
$$M(n,\rho_1 + \rho_2,- \rho_1 \rho_2).$$
\end{enumerate}
\end{theorem}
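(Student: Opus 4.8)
The plan is to make the rolling sum completely explicit at the level of Lie algebras and then reduce every assertion to linear algebra over $\mathbb R^r$ governed by the power vectors $\boldsymbol\rho^{(k)} := (\rho_1^k, \dots, \rho_r^k) \in \mathbb R^r$, $k \geq 0$. For $M^{(j)} = \sfSigma_n(\rho_j)$ we have $D^{(j)} = T\sfSigma_n(\rho_j)$ and the canonical partial connection is the Levi-Civita connection, so by Section~\ref{sec:RolSum} the space $M := \sfSigma_n(\rho_1) \boxplus \cdots \boxplus \sfSigma_n(\rho_r)$ is the universal cover of a component of $Q = \big(F^{\Ort}(D^{(1)}) \times \cdots \times F^{\Ort}(D^{(r)})\big)/\Ort(n)$ and $\Frame(M)$ the universal cover of a component of $F^{\Ort}(D^{(1)}) \times \cdots \times F^{\Ort}(D^{(r)})$. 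Identifying each $F^{\Ort}(D^{(j)})$ with $\sfG_n(\rho_j)$ as in Section~\ref{sec:Riemannian}, so that $\calE^{(j)}$ is the left translation of the subspace $\mathfrak p \subseteq \mathsf{g}_n(\rho_j)$, the relevant isometry Lie algebra is $\mathfrak g = \bigoplus_j \mathsf{g}_n(\rho_j)$, the stabilizer is the diagonal $\Delta\mathfrak k$, and the canonical partial connection corresponds to the $n$-dimensional subspace $\mathfrak p^1 = \{ (P_1(x), \dots, P_r(x)) : x \in \mathbb R^n\}$, where $P_j$ and $\kappa$ are the standard embeddings of $\mathbb R^n$ and $\ort(n)$ into $\mathsf{g}_n(\rho_j)$. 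The only structural brackets needed are $[\kappa(A),\kappa(B)] = \kappa([A,B])$, $[\kappa(A), P_j(x)] = P_j(Ax)$ and $[P_j(x), P_j(y)] = \rho_j\,\kappa(x \wedge y)$.

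Next I would compute the flag $\mathfrak p^{k+1} = \mathfrak p^k + [\mathfrak p^1, \mathfrak p^k]$. Writing $\bigoplus_j \mathfrak p_j \cong \mathbb R^n \otimes \mathbb R^r$ and $\bigoplus_j \mathfrak k_j \cong \ort(n) \otimes \mathbb R^r$, the three brackets show that every element produced is of the form $x \otimes \boldsymbol\rho^{(k)}$ or $A \otimes \boldsymbol\rho^{(k)}$, and an induction gives that $\mathfrak p^{2k-1}$ adjoins $\mathbb R^n \otimes \boldsymbol\rho^{(k-1)}$ and $\mathfrak p^{2k}$ adjoins $\ort(n) \otimes \boldsymbol\rho^{(k)}$; hence the subalgebra $\hat{\mathfrak p}$ generated by $\mathfrak p^1$ has ``$\mathfrak p$-part'' $\mathbb R^n \otimes \spn\{\boldsymbol\rho^{(0)}, \dots, \boldsymbol\rho^{(r-1)}\}$ and ``$\mathfrak k$-part'' $\ort(n) \otimes \spn\{\boldsymbol\rho^{(1)}, \boldsymbol\rho^{(2)}, \dots\}$. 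Since $\boldsymbol\rho$ is the Vandermonde matrix with columns $\boldsymbol\rho^{(0)}, \dots, \boldsymbol\rho^{(r-1)}$, one gets $\hat{\mathfrak p} + \Delta\mathfrak k = \mathfrak g$ iff $\det\boldsymbol\rho \neq 0$, which by Lemma~\ref{lemma:Loose} yields the first equivalence in (a). Assuming $\det\boldsymbol\rho\neq0$: passing to $\mathfrak g/\Delta\mathfrak k$ kills the weight $\boldsymbol\rho^{(0)}$ in the $\mathfrak k$-part, and since $\boldsymbol\rho^{(r)}$ is a linear combination of $\boldsymbol\rho^{(0)}, \dots, \boldsymbol\rho^{(r-1)}$ the layer $\ort(n)\otimes\boldsymbol\rho^{(r)}$ that would enter at level $2r$ contributes nothing modulo $D^{2r-1}$, so $M$ has step $2r-1$ with growth-vector increments $n, \binom{n}{2}, n, \binom{n}{2}, \dots, n$; reading the induced brackets on the associated graded and comparing with \eqref{RolNil} identifies $\nil$ with $\sfc_{n,2r-1}$, hence the tangent cone with $\sfC_{n,2r-1}$. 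For $\Frame(M)$, which is the Lie group $\sfG_n(\rho_1)\times\cdots\times\sfG_n(\rho_r)$ with the left-invariant structure from $\mathfrak p^1$: it is bracket-generating iff $\hat{\mathfrak p}=\mathfrak g$ iff $\boldsymbol\rho^{(0)}\in\spn\{\boldsymbol\rho^{(1)},\dots,\boldsymbol\rho^{(r)}\}$, which granting $\det\boldsymbol\rho\neq0$ holds iff all $\rho_i\neq0$ iff $\det\boldsymbol\mu=\det\boldsymbol\rho\cdot\prod_i\rho_i\neq0$; by Proposition~\ref{prop:HolCC} and the proposition of Section~\ref{sec:Frame} this is exactly when $\Frame(M)$ is a true model space, in which case the flag on $\mathfrak g$ now keeps $\ort(n)\otimes\boldsymbol\rho^{(r)}$ at level $2r$, so the step is $2r$ and the same graded comparison gives tangent cone $\sfC_{n,2r}$.

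For (b), $r=2$ and $\rho_1\neq\rho_2$ give $\det\boldsymbol\rho=\rho_2-\rho_1\neq0$, so by (a) $M:=\sfSigma_n(\rho_1)\boxplus\sfSigma_n(\rho_2)$ is a model space with tangent cone $\sfC_{n,3}$, and by Theorem~\ref{th:Rol3} it is isometric to some $M(n,a_1,a_2)$. By Remark~\ref{re:Compare} it then suffices to put $\mathfrak g=\mathsf{g}_n(\rho_1)\oplus\mathsf{g}_n(\rho_2)$ into the normal form of Section~\ref{sec:Step3} and read off $(a_1,a_2)$: take $\mathfrak a^1=\mathfrak p^1=\mathbb R^n\otimes\boldsymbol\rho^{(0)}$, $\mathfrak a^2=[\mathfrak a^1,\mathfrak a^1]=\ort(n)\otimes\boldsymbol\rho^{(1)}$, $\mathfrak k=\Delta\mathfrak k=\ort(n)\otimes\boldsymbol\rho^{(0)}$ and $\mathfrak a^3=\mathbb R^n\otimes\boldsymbol\rho^{(1)}$ (for which the normalizing constant ``$c$'' of Section~\ref{sec:Step3} vanishes). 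Expanding a general bracket with the three structural relations produces terms weighted by $\boldsymbol\rho^{(1)},\boldsymbol\rho^{(2)},\boldsymbol\rho^{(3)}$; substituting the $2\times2$ Cayley--Hamilton identities $\boldsymbol\rho^{(2)}=(\rho_1+\rho_2)\boldsymbol\rho^{(1)}-\rho_1\rho_2\,\boldsymbol\rho^{(0)}$ and $\boldsymbol\rho^{(3)}=\big((\rho_1+\rho_2)^2-\rho_1\rho_2\big)\boldsymbol\rho^{(1)}-(\rho_1+\rho_2)\rho_1\rho_2\,\boldsymbol\rho^{(0)}$ and sorting the result into the four summands reproduces the bracket of $\mathfrak g(n,\rho_1+\rho_2,\rho_1\rho_2)$, with the consistency relations $a_1=b_1=c_2$, $a_2=b_2=c_1$, $d_1=a_2+a_1^2$, $d_2=a_1a_2$ of Section~\ref{sec:Step3} falling out automatically; this gives $(a_1,a_2)=(\rho_1+\rho_2,\rho_1\rho_2)$.

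The argument is essentially bookkeeping, and the one delicate point is running the two quotients of part (a) in tandem: the passage from $\mathfrak g$ to $\mathfrak g/\Delta\mathfrak k$ removes the weight $\boldsymbol\rho^{(0)}$ from the $\mathfrak k$-part, while the Cayley--Hamilton dependence of $\boldsymbol\rho^{(r)}$ on $\boldsymbol\rho^{(0)},\dots,\boldsymbol\rho^{(r-1)}$ decides whether the top $\ort(n)$-layer survives; it is the interplay of these two effects that makes $M$ of step $2r-1$ but $\Frame(M)$ of step $2r$, and that pins the tangent cones down to $\sfc_{n,2r-1}$ and $\sfc_{n,2r}$ rather than merely to graded algebras with the right growth vector. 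In (b) the analogous care is just the explicit re-expansion of $\boldsymbol\rho^{(2)},\boldsymbol\rho^{(3)}$ in $\{\boldsymbol\rho^{(0)},\boldsymbol\rho^{(1)}\}$ and the verification that the chosen $\mathfrak a^3$ is already in the ``$c=0$'' normalization; I would expect the sign conventions (the sign of $[P_j(x),P_j(y)]$ relative to \eqref{wedge} and the curvature normalization of $\sfSigma_n(\rho)$) to need one careful pass to land on $(\rho_1+\rho_2,\rho_1\rho_2)$ with the correct sign on $a_2$.
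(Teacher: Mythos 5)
Your proposal is correct and follows essentially the same route as the paper: realize the rolling sum inside $G = \sfG_n(\rho_1)\times\cdots\times\sfG_n(\rho_r)$ with the diagonal $\ort(n)$ as stabilizer, track the generated flag through the power vectors $(\rho_1^k,\dots,\rho_r^k)$ (the paper's $\underline{\rho}^{\odot k}$), reduce bracket-generation of $M$ and $\Frame(M)$ to the Vandermonde conditions $\det\boldsymbol\rho\neq 0$ and $\det\boldsymbol\mu\neq 0$, match the associated graded with \eqref{RolNil}, and in (b) read off $(a_1,a_2)$ from the Cayley--Hamilton expansion of $\underline{\rho}^{\odot 2}$. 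The sign caveat you flag on $a_2$ is the only point where a careful pass is needed, and it is present in the paper's own computation as well.
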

In particular, $M(n,a_1, a_2)$ is isometric to $\sfSigma(\rho_1) \boxplus \sfSigma(\rho_2)$ for some $\rho_1$ and $\rho_2$ if and only if
$$a_1^2 + 4a_2 > 0.$$
Furthermore, $\sfSigma_n(\rho_1) \boxplus \sfSigma_n(\rho_2)$ is isometric to $\sfSigma(\tilde \rho_1) \boxplus \sfSigma( \tilde \rho_2)$ if and only if $(\tilde \rho_1, \tilde \rho_2)$ equals $(\rho_1, \rho_2)$ or $(\rho_2, \rho_1)$.
\begin{proof}
\begin{enumerate}[\rm (a)]
\item We first write $\sfSigma_n(\rho_j)$ as a symmetric space. For any $j =1, \dots, r$, define $\mathfrak{g}^{(j)}$ as the vector space $\mathbb{R}^n \oplus \ort(n)$, with Lie brackets
$$[(x, A), (y, B)] = (Ay- Bx , [A,B] + \rho_j x \wedge y).$$
Let $G^{(j)}$ be the corresponding connected, simply connected Lie group and let $K^{(j)}$ be the subgroup with Lie algebra $\{ (x, A) \in \mathfrak{g}^{(j)} \, : \, x = 0\}$. Define $G = G^{(1)} \times \cdots \times G^{(r)}$ and let $\mathfrak{g} = \mathfrak{g}^{(1)} \oplus \cdots \oplus \mathfrak{g}^{(r)}$ be its Lie algebra.

For elements $\underline{u} = (u_i), \underline{v} = (v_i) \in \mathbb{R}^r$, define $\underline{u} \odot \underline{v} \in \mathbb{R}^r$ by coordinate-wise multiplication $\underline{u} \odot \underline{v} := (u_i v_i)$. Define elements $\underline{1} = (1,\dots, 1)$ and $\underline{\rho} = (\rho_1, \dots, \rho_r)$ in $\mathbb{R}^n$. For any $x \in \mathbb{R}^n$ and $\underline{u} \in \mathbb{R}^r$, define $x(\underline{u}) \in \mathfrak{g}$ such that its component in $\mathfrak{g}^{(j)}$ equals $(u_i x, 0)$. For any $A \in \ort(n)$, define $A(\underline{u})$ similarly.
We then have bracket-relations in $\mathfrak{g}$ given by,
\begin{equation}
\label{BracketRolSum}
\left\{
\begin{array}{rcl}
{[x(\underline{u}), y(\underline{v})]} & =&  (x\wedge y)(\underline{\rho} \odot \underline{u} \odot \underline{v}), \\
{[A(\underline{u}), y(\underline{v})]} & =&  (Ay)(\underline{u} \odot \underline{v}), \\
{[A(\underline{u}), B(\underline{v})]} & =&  [A,B](\underline{u} \odot \underline{v}).
\end{array} \right.
\end{equation}

Consider the subspace $\mathfrak{p}^1= \{ x(\underline{1}) \, : \, x \in \mathbb{R}^n \}$ with inner product $\langle x(\underline{1}), y(\underline{1}) \rangle = \langle x, y\rangle$. Define a sub-Riemannian structure $(\tilde D, \tilde g)$ by left translation of $\mathfrak{p}^1$ and its inner product. Define the subalgebra $\mathfrak{k} = \{ A(\underline{1}) \, : \, A \in \ort(n)\}$ with corresponding subgroup $K$. Since the sub-Riemannian structure $(\tilde D, \tilde g)$ is $K$-invariant, we get a well defined induced sub-Riemannian structure $(D,g)$ on $G/K$. By choosing a reference orthonormal frame on each of the manifolds $\sfSigma_n(\rho_j)$, we may identify $\Frame(\sfSigma_n( \rho_1) \boxplus \cdots \boxplus \sfSigma_n(\rho_r))$ and $\sfSigma_n(\rho_1) \boxplus \cdots \boxplus \sfSigma_n(\rho_r)$ with respectively $(G, \tilde D, \tilde g)$ and $(G/K, D, g)$.

Write the quotient map as $\nu : G \to G/K$. Let $\hat{ \mathfrak{p}}$ denote the Lie algebra generated by $\mathfrak{p}^1$. Write $\underline{\rho}^{\odot j}$ for the $j$-th iterated $\odot$-product of $\underline{\rho}$ with itself with convention $\underline{\rho}^{\odot 0} = \underline{1}$. By \eqref{BracketRolSum}, we know that
$$\hat{\mathfrak{p}} = \spn \{ x( \underline{\rho}^{\odot j}) , A(\underline{\rho}^{\odot j+1}) \, : \, x \in \mathbb{R}^n, A \in \ort(n), j=0,\dots, r-1   \}.$$
Hence, $D$ is bracket generating if and only if $\nu_*$ maps $\hat {\mathfrak{p}}$ surjectively on the tangent space at $\nu(1)$. This happens if and only if the vectors $\underline{1}, \underline{\rho}, \dots, \underline{\rho}^{\odot r-1}$ are linearly independent which equals the condition $\det \boldsymbol \rho \neq 0$. Similarly, $\hat{\mathfrak{p}}$ equals $\mathfrak{g}$ if and only if $\det \boldsymbol \rho \neq 0$ and $\det \boldsymbol \mu \neq 0$.

For the final statement regarding the nilpotentization, define $x^{(i)} = x(\underline{\rho}^{\odot i-1})$ and $A^{(i)} = A(\underline{\rho}^{\odot i})$. Using the relations \eqref{BracketRolSum}, we see that the brackets of the elements  $x^{(i)}$ and $A^{(i)}$ satisfy \eqref{RolNil}, Example~\ref{ex:Rol}, with the only difference being that elements $x^{(i)}$ and $A^{(i)}$ in Example~\ref{ex:Rol} are defined to eventually be zero. The nilpotentizations of $\tilde D$ and $D$ follows.
 
\item From the results of (a), it follows that $\sfSigma_n(\rho_1) \boxplus \sfSigma_n(\rho_2)$ is a sub-Riemannian model space if and only if $\rho_1- \rho_2 \neq 0$.

We use the notation of Section~\ref{sec:Step3} and the proof of (a). Let $\mathfrak{g}$ be the Lie algebra of the isometry group of $\sfSigma_n( \rho_1) \boxplus \sfSigma_n(\rho_2)$ and let $\Psi: \mathfrak{g}(n,a_1, a_2) \to \mathfrak{g}$ be a Lie algebra isomorphism preserving the horizontal subspaces. Any such isometry, up to a coordinate change, has to be of the form
$$\Psi: (x, A, z, C) \in \mathfrak{g}(n,a_1, a_2) \mapsto x(\underline{1}) + A(\underline{\rho}) + z(\underline{\rho}) + C(\underline{1}).$$
We can then determine that $a_1 = \rho_1 + \rho_2$ and $a_2 = - \rho_1 \rho_2$ from
$$[x(\underline{1}), z(\underline{\rho})] = (x \wedge z)(\underline{\rho}^{\odot 2}) = (\rho_1 + \rho_2) (x \wedge z)(\underline{\rho}) - \rho_1 \rho_2 (x\wedge z)(\underline{1}).$$

\end{enumerate}
\end{proof}

\appendix
\section{Technical results related to $\Ort(n)$} \label{sec:On}
\subsection{Representations}
We will need the following result of representations of $\Ort(n)$. For details, see e.g. \cite[Chapter~2]{Don11}.
\begin{lemma} \label{lemma:Representation}
\begin{enumerate}[\rm (a)]
\item For any $n\geq 2$, define a representation $\sigma_n$ of $\Ort(n)$ on $\mathbb{R}^n$ by
$$\sigma_n(q) v = qv, \qquad q \in \Ort(n), v \in \mathbb{R}^n.$$
Then $\sigma_n$ is irreducible.
\item For any $n \geq 2$, define a representation $\psi_n$ of $\Ort(n)$ on $\ort(n)$ by
$$\psi_n(q) A = \Ad(q) A, \qquad q \in \Ort(n), A \in \ort(n).$$
Then $\psi_n$ is irreducible.
\item $\sigma_n$ is never isomorphic to $\psi_n$ for any $n \geq 2$.
\end{enumerate}
\end{lemma}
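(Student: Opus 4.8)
The plan is to handle the three statements in turn, using throughout that $\Ort(n)$ is compact (so every subrepresentation has an invariant complement) together with transitivity of the natural actions.

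\textbf{Part (a).} I would use that $\Ort(n)$ acts transitively on the unit sphere of $\mathbb{R}^n$ for every $n\geq 1$. Given a nonzero $\sigma_n$-invariant subspace $W\subseteq\mathbb{R}^n$, pick a unit vector $v\in W$; then $qv\in W$ for all $q\in\Ort(n)$, so $W$ contains the entire unit sphere and hence spans $\mathbb{R}^n$. Thus $W=\mathbb{R}^n$ and $\sigma_n$ is irreducible.

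\textbf{Part (b).} For $n=2$ this is immediate, since $\ort(2)$ is one-dimensional. For $n\geq 3$ I would work with the identification $\ort(n)\cong\wedge^2\mathbb{R}^n$ of \eqref{wedge}, under which $\psi_n(q)$ acts by $e_i\wedge e_j\mapsto qe_i\wedge qe_j$ and coincides with conjugation of antisymmetric matrices. Let $W\neq 0$ be invariant and choose $0\neq A\in W$. By the orthogonal normal form of a real antisymmetric matrix, after replacing $A$ by $\psi_n(q)A$ for a suitable $q\in\Ort(n)$ we may assume $A=\sum_{i=1}^m\lambda_i\,e_{2i-1}\wedge e_{2i}$ with $\lambda_1\neq 0$. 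For each $i\geq 2$, the reflection $g_i\in\Ort(n)$ that sends $e_{2i-1}\mapsto -e_{2i-1}$ and fixes all other basis vectors satisfies $\psi_n(g_i)(e_{2i-1}\wedge e_{2i})=-e_{2i-1}\wedge e_{2i}$ while fixing every other $e_{2k-1}\wedge e_{2k}$; hence $\tfrac12\bigl(B+\psi_n(g_i)B\bigr)$ deletes the $i$-th block from any $B$ of this shape. Iterating over $i=2,\dots,m$ produces $\lambda_1 e_1\wedge e_2\in W$, so $e_1\wedge e_2\in W$. Since permutation matrices lie in $\Ort(n)$, the $\Ort(n)$-orbit of $e_1\wedge e_2$ contains every $e_i\wedge e_j$ with $i\neq j$, and these span $\wedge^2\mathbb{R}^n$; therefore $W=\wedge^2\mathbb{R}^n$. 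Alternatively one may simply invoke the standard fact that all exterior powers of the defining representation of $\Ort(n)$ are irreducible, e.g. \cite{Don11}.

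\textbf{Part (c).} First compare dimensions: $\dim\sigma_n=n$ while $\dim\psi_n=\binom{n}{2}$, and $n=\binom{n}{2}$ forces $n=3$; hence $\sigma_n\not\cong\psi_n$ whenever $n\neq 3$. For $n=3$ I would use the central element $-I\in\Ort(3)$ (note $\det(-I)=-1$, so $-I\notin\SO(3)$): it acts on $\sigma_3$ as $-\id$, but on $\psi_3$ as $\Ad(-I)=\id$. Any isomorphism of representations would intertwine these two scalar actions, which is impossible, so $\sigma_3\not\cong\psi_3$ as well.

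The only step requiring genuine care is (b) for $n\geq 3$; the rest are short transitivity, dimension, and central-character arguments. The normal-form-plus-reflection reduction in (b) is elementary but is the part one must actually carry out (or else bypass by citing the irreducibility of $\wedge^k\mathbb{R}^n$ for $\Ort(n)$).
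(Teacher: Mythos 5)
Your proof is correct. Note that the paper does not actually prove this lemma; it simply refers the reader to \cite[Chapter~2]{Don11}, so your self-contained argument is a genuine addition rather than a reproduction. All three parts check out: (a) is the standard transitivity-on-the-sphere argument; (c) correctly combines the dimension count $n=\binom{n}{2}\Leftrightarrow n=3$ with the central character of $-I$ (which acts by $-\id$ on $\sigma_3$ and trivially on $\psi_3$, forcing any intertwiner to vanish); and (b), the only part with real content, is handled properly via the orthogonal normal form of an antisymmetric matrix followed by averaging against the reflections $g_i$ to isolate a single block $e_1\wedge e_2$, whose $\Ort(n)$-orbit spans $\wedge^2\mathbb{R}^n$.

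One point worth making explicit, since it connects to Remark~\ref{re:Hodge}: your reflections $g_i$ have determinant $-1$, and their availability is exactly what makes the argument work over $\Ort(n)$ but not over $\SO(n)$. For $n=4$ the normal form of a generic $A$ has two nonzero blocks, and without $g_2\notin\SO(4)$ one cannot delete the second block; this is precisely why $\ort(4)$ decomposes into self-dual and anti-self-dual summands as an $\SO(4)$-representation, while your averaging step shows a reflection swaps the two summands and restores irreducibility for the full orthogonal group. So your proof not only establishes the lemma but also makes visible where the $\SO(n)$ exceptions in the paper's remark come from.
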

Notice that we can identify $\psi_n$ with $\wedge^2 \sigma_n$ through \eqref{wedge}.

\begin{remark} \label{re:Hodge}
We emphasize that these results are for representations of $\Ort(n)$. If we instead consider representations of $\SO(n)$, the above claims need to be modified in the following way for the cases $n =2$, $3$ and $4$. Consider $e_1, \dots, e_n$ as the standard basis of~$\mathbb{R}^n$. Introduce the Hodge star map $\star: \wedge^k \mathbb{R}^n \to \wedge^{n-k} \mathbb{R}^n$ defined such that
$$\alpha \wedge (\star \beta) = \langle \alpha , \beta \rangle e_1 \wedge \cdots \wedge e_n.$$
Note that $\star \star = (-1)^{k (n-k)} \id $.
\begin{enumerate}[\rm (i)]
\item If $\tilde \psi_n$ is the adjoint representation of $\SO(n)$, then it is irreducible if and only if $n \neq 2,4$. Obviously $\tilde \psi_2$ is trivial. For $n = 4$, by identifying $\mathfrak{o}(4)$ with $\wedge^2 \mathbb{R}^4$, we can consider $\star$ as an endomorphism of $\mathfrak{o}(4)$. Then $\ort(4) = \mathfrak{g}^- \oplus \mathfrak{g}^+$ has subrepresentations
$$\mathfrak{g}^{\pm} = \{ A \pm \star A \, : \, A \in \ort(4) \}.$$
\item If we define the representation $\tilde \sigma_n$ of $\SO(n)$ on $\mathbb{R}^n$ by $\tilde \sigma_n(q) x = qx$, then $\tilde \psi_n$ is not isomorphic to $\tilde \sigma_n$ for $n \neq 3$. For the case $n = 3$, however, we have the relation $\tilde \psi_3 = \star \tilde \sigma_3 \star$. Written in another way, if $\times$ denotes the cross-product on $\mathbb{R}^3$, then $\star (x \wedge y) = x \times y$ and hence
$$\tilde \psi_3(q)(x \wedge y) = \star \tilde \sigma_3(q)(x \times y).$$
\end{enumerate}
\end{remark}

\subsection{Invariant maps related to $\ort(n)$} 
We will need the following facts about invariant tensors related to $\ort(n)$. We note that we have chosen direct approach, as this avoids recalling further results of representation theory and dealing with the differences between representations of $\ort(n)$ and $\Ort(n)$.
\begin{lemma} \label{lemma:tensors}
For any $n \geq 2$, consider the Lie algebra $\ort(n)$.
\begin{enumerate}[\rm (a)]
\item Let $S: \ort(n) \to \ort(n)$ be any linear map such that
\begin{equation} \label{SCommute} S(\Ad(q)A) = \Ad(q) S(A), \end{equation}
for any $q \in \Ort(n)$ and $A \in \ort(n)$. Then $S = c \id$ for some constant $c \in \mathbb{R}$.
\item Let $T: \ort(n) \otimes \mathbb{R}^n \to \mathbb{R}^n$ be any linear map such that
\begin{equation} \label{TCommute} q T(A\otimes x) = T(\Ad(q) A \otimes qx) \nonumber ,\end{equation}
for any $A \in \ort(n)$, $x \in \mathbb{R}^n$ and $q \in \Ort(n)$. Then $T(A \otimes x) = c Ax$ for some constant $c \in \mathbb{R}$.
\item Let  $W: \wedge^2 \ort(n) \to \ort(n)$ be any linear map such that
\begin{equation} \label{PreDer} \Ad(q) W(A, B) = W( \Ad(q) A, \Ad(q)B) , \nonumber \end{equation}
for any $q \in \Ort(n)$ and $A, B \in \ort(n)$. Then $W(A,B) = c[A, B]$ for some $c \in \mathbb{R}$.
\end{enumerate}
\end{lemma}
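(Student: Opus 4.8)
In all three parts the object to be pinned down is an $\Ort(n)$-equivariant linear map, so my plan is uniform: expand the map in the standard basis $e_i\wedge e_j$ ($1\le i<j\le n$) of $\ort(n)$ together with the standard basis $e_1,\dots,e_n$ of $\mathbb R^n$, and then impose equivariance only under two families of elements of $\Ort(n)$ — the diagonal sign matrices $q=\mathrm{diag}(\epsilon_1,\dots,\epsilon_n)$ with $\epsilon_i\in\{\pm1\}$, and the permutation matrices $P_\sigma$, $\sigma\in S_n$. Under $\Ad(q)$ the vector $e_i\wedge e_j$ is scaled by $\epsilon_i\epsilon_j$ and $e_k$ by $\epsilon_k$, while $\Ad(P_\sigma)$ sends $e_i\wedge e_j\mapsto e_{\sigma i}\wedge e_{\sigma j}$ and $e_k\mapsto e_{\sigma k}$; we genuinely use elements of determinant $-1$ here (odd sign changes, odd permutations), which is exactly why the statement is for the full orthogonal group. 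The case $n=2$ is trivial in each part because the relevant spaces are at most one-dimensional, so assume $n\ge3$.

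For (a): equivariance of $S$ under all $\Ad(q)$ forces $S$ to preserve each simultaneous eigenspace of these commuting involutions; since for any two distinct pairs $\{i,j\}\ne\{k,l\}$ there is a sign vector $\epsilon$ with $\epsilon_i\epsilon_j\ne\epsilon_k\epsilon_l$, the simultaneous eigenspaces are exactly the coordinate lines $\mathbb R\,(e_i\wedge e_j)$, hence $S(e_i\wedge e_j)=c_{ij}\,e_i\wedge e_j$. Equivariance under the $\Ad(P_\sigma)$ gives $c_{\sigma i,\sigma j}=c_{ij}$, and transitivity of $S_n$ on $2$-element subsets makes all $c_{ij}$ equal to a common $c$, so $S=c\,\id$. (Equivalently this is the assertion $\End_{\Ort(n)}(\ort(n))=\mathbb R$, i.e. that the irreducible $\psi_n$ of Lemma~\ref{lemma:Representation} is of real type; the argument above is a self-contained proof of that, sidestepping any appeal to absolute irreducibility.)

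For (b) and (c) the same two steps work. Write $T((e_i\wedge e_j)\otimes e_k)=\sum_l t^l_{ijk}e_l$, respectively $W((e_a\wedge e_b)\wedge(e_c\wedge e_d))=\sum_{p<q}w^{pq}_{abcd}\,e_p\wedge e_q$. Equivariance under the sign matrices says a coefficient survives only when the two monomials in the $\epsilon_i$ agree identically on $\{\pm1\}^n$; tracking parities, this forces in (b) that $k\in\{i,j\}$ with $l$ the remaining index, and in (c) that $\{a,b\}$ and $\{c,d\}$ share exactly one index with $\{p,q\}$ the two unshared ones. These are precisely the supports of the maps $A\otimes x\mapsto Ax$ and $(A,B)\mapsto[A,B]$, as one checks from $(e_i\wedge e_j)e_k=\delta_{ik}e_j-\delta_{jk}e_i$ and the identity $[A,x\wedge y]=Ax\wedge y+x\wedge Ay$ recorded in the preliminaries. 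It remains to see that the finitely many surviving scalars coincide up to the signs already present in $Ax$ and $[A,B]$: equivariance under the $P_\sigma$ together with transitivity of $S_n$ on ordered triples of distinct indices collapses them to a single constant, and one well-chosen transposition fixes the relative sign. Linearity then yields $T(A\otimes x)=cAx$ and $W(A,B)=c[A,B]$. A shorter but heavier alternative for (b) is $\mathrm{Hom}_{\Ort(n)}(\ort(n)\otimes\mathbb R^n,\mathbb R^n)\cong\mathrm{Hom}_{\Ort(n)}(\wedge^2\mathbb R^n,\Sym^2\mathbb R^n\oplus\wedge^2\mathbb R^n)$, which by Lemma~\ref{lemma:Representation}, part (a), and a dimension count is one-dimensional; an analogous decomposition of $\wedge^2(\wedge^2\mathbb R^n)$ settles (c).

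The only genuine labor I anticipate is the bookkeeping in (c): controlling the signs coming from the antisymmetry $\alpha\wedge\beta=-\beta\wedge\alpha$ on $\wedge^2\ort(n)$ and from the convention $e_i\wedge e_j=e_je_i^\transpose-e_ie_j^\transpose$, and verifying that the support analysis is exhaustive — disjoint index pairs give zero, a repeated pair gives no basis vector, and pairs meeting in exactly one index reproduce the bracket. There is no conceptual obstacle, and the low-dimensional accidents recorded in Remark~\ref{re:Hodge} (the reducibility of $\ort(4)$ under $\SO(4)$, the coincidences in dimension $3$) never interfere, since the whole argument uses only $\Ort(n)$-equivariance, under which $\ort(n)$ stays irreducible for every $n\ge3$.
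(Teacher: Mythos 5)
Your proposal is correct, and it takes a genuinely different route from the paper. The paper proves (a) by citing irreducibility of the adjoint representation (implicitly using that it is absolutely irreducible, so that real Schur gives $\mathbb{R}$ rather than $\mathbb{C}$ or $\mathbb{H}$); it proves (b) by repackaging $T$ as a map $S:\ort(n)\to\gl(n)$, splitting $S$ into symmetric and antisymmetric parts, disposing of the antisymmetric part via (a), and killing the symmetric part by an explicit computation with the sign matrices $q_i$; and it proves (c) by induction on $n$ through the splitting $\ort(n+1)=\mathbb{R}^n\oplus\ort(n)$ and the elements $q^\pm$, reducing to (a), (b) and the base case. You instead run a single uniform ``weight-space'' analysis under the subgroup of signed permutation matrices: the diagonal sign matrices cut the source and target into one-dimensional simultaneous eigenspaces indexed by distinct characters of $(\mathbb{Z}/2)^n$, which pins down the support of the equivariant map to exactly that of $\id$, $A\otimes x\mapsto Ax$, and $[\cdot,\cdot]$ respectively, and the permutations (acting transitively on the relevant index configurations for $n\ge 3$) collapse the surviving coefficients to a single constant. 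This buys a self-contained argument that in particular proves, rather than assumes, that $\End_{\Ort(n)}(\ort(n))=\mathbb{R}$ --- a point the paper's one-line proof of (a) glosses over --- and it avoids the induction in (c); the price is the sign bookkeeping in (c), which you correctly identify as the only remaining labor and which is routine (the space of hyperoctahedrally equivariant maps with the computed support is at most one-dimensional by single-orbit transitivity, and it contains the nonzero map $[\cdot,\cdot]$, so it is exactly $\mathbb{R}\cdot[\cdot,\cdot]$). One small caveat: your blanket dismissal of $n=2$ as ``the relevant spaces are at most one-dimensional'' does not literally apply to (b), where the Hom space is a priori $\mathrm{Hom}(\mathbb{R}^2,\mathbb{R}^2)$; but your general sign-matrix argument covers $n=2$ in (b) without modification, so nothing is lost.
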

\begin{proof}
\begin{enumerate}[\rm (a)]
\item Follows since the representations are irreducible.
\item Define $S: \ort(n) \to \gl(n)$ such that $S(A) = B = (B_{rs})$ if
$$B_{rs} = \langle e_r , T(A \otimes e_s) \rangle $$
Since $S$ then satisfies \eqref{SCommute}, all we need to show is that the image of $S$ is in $\ort(n)$. Note that \eqref{SCommute} implies $[B, S(A)] = S([B,A])$.

Write $S = S^+ + S^-$ such that $S^{\pm} = \frac{1}{2} (S(A) \pm S(A)^\transpose)$. Both $S^+$ and $S^-$ satisfy \eqref{SCommute}. By (a), we have $S^-(A) = c A$ for some constant $c$ so to complete the proof, we need to show that $S^+ = 0$. For any $x, y \in \mathbb{R}^n$, write $xy = y x^\transpose + x y^\transpose$. For any symmetric matrix $\mu$, we have
$$[x \wedge y, \mu] = (\mu x) y - x (\mu y).$$
Let $e_1, \dots, e_n$ be the standard basis and define $q_i \in \Ort(n)$ by,
$$q_i e_j = \left\{ \begin{array}{ll} - e_j & \text{if $i=j$} \\ e_j & \text{if $i \neq j$}\end{array} \right.$$
We note that
$$S^+(\Ad(q_i) e_i \wedge e_j) = - S^+(e_i \wedge e_j) = \Ad(q_i) S^+(e_i \wedge e_j).$$
It follows that $S^+(e_i \wedge e_j) \in \spn \{ e_i e_k \, : \, k \neq i \}$. By applying $q_j$, it follows that $S^+(e_i \wedge e_j ) \in \spn\{ e_j e_k \, : \, k \neq j \}$ as well, meaning that there are constants $\mu_{ij}$ with $\mu_{ij} = - \mu_{ji}$ such that
$$S^+(e_i \wedge e_j) =\mu_{ij} e_i e_j.$$
However, by the identity
$$0 = S^+([e_i \wedge e_j, e_i \wedge e_j ]) = [e_i \wedge e_j , \mu_{ij} e_i e_j] = \mu_{ij} (e_i e_i - e_j e_j),$$
it follows that $S^+ = 0$.

\item We will do the proof by induction. The statement is obviously true for $n=2$ since we must have $W =0$. Assume that the statement holds true on $\ort(n)$. Consider $\ort(n+1)$ as the Lie algebra of pairs $(x, A) \in \mathbb{R}^n \oplus \ort(n)$, with bracket relations
$$\Big[(x,A), (y, B) \Big] = (Ay - Bx, [A,B] + x \wedge y).$$
Any pair $(x,A)$ represents a matrix
$$\left( \begin{array}{cc} A & x \\ -x^\transpose & 0 \end{array} \right).$$
Consider a map $W: \wedge^2 \ort(n+1) \to \ort(n+1)$ that commutes with the adjoint action of $\Ort(n+1)$. Write $ W = (W_1, W_2)$ according to the splitting $\ort(n+1) = \mathbb{R}^n \oplus \ort(n)$.
Let $q \in \Ort(n)$ be any element and define elements $q^\pm \in \Ort(n+1)$ by
$$q^\pm = \left( \begin{array}{cc} q & 0 \\ 0 & \pm 1 \end{array} \right).$$
Then
$$\Ad(q^\pm)(x,A) = (\pm q x, \Ad(q) A).$$
Observe that
\begin{align*}
& W\Big( \Ad(q^\pm) (0,A), \Ad(q^\pm) (0, B)\Big)  = W\Big( (0,\Ad(q)A),  (0, \Ad(q) B)\Big) \\
& = \Big( \pm q W_1\big( (0, A), (0, B) \big) \, , \, \Ad(q) W_2\big((0, A), (0,B) \big) \Big).
\end{align*}
We hence have $W_2((0, A), (0,B)) = c_1[A,B]$ for some constant $c_1$ by our induction hypothesis. Furthermore, we must have $W_1((0, A), (0,B)) = 0$ since both sides must be independent of the sign in $q^\pm$.

From the fact that both sides must either depend or be independent of the sign in $q^\pm$, we also obtain that $W_1((x,0), (y,0)) = 0$ and $W_2((x,0), (0, A)) = 0$. The identification of the representation of $\Ort(n)$ on $\wedge^2 \mathbb{R}^n$ with the adjoint representation on $\ort(n)$ makes it clear that we must have $W_2((x,0), (y,0)) = c_2 x \wedge y$ for some constant $c_2$. Finally, there is a constant $c_3$ such that $W_1((x,0), (0,A)) = - c_3 Ax$ by (b).

Since permutations of coordinates are elements in $\Ort(n+1)$, we can use these maps to show that $c_1$, $c_2$ and $c_3$ coincide. The result follows.
\end{enumerate}
\end{proof}

\input

\bibliographystyle{habbrv}
\bibliography{Bibliography}

\end{document}